\documentclass{amsart}

\usepackage{amsmath, amssymb, amsthm, setspace, hyperref, xypic}
\usepackage[margin=1.25in]{geometry}
\newcommand{\norm}[1]{\left\lVert#1\right\rVert}
\newcommand{\ddb}{\partial\bar{\partial}}

\theoremstyle{definition}
\newtheorem{definition}{Definition}

\newtheorem{example}[definition]{Example}
\newtheorem{remark}[definition]{Remark}

\newtheorem{notation}[definition]{Notation}

\theoremstyle{theorem}
\newtheorem{proposition}[definition]{Proposition}
\newtheorem{theorem}[definition]{Theorem}
\newtheorem{corollary}[definition]{Corollary}
\newtheorem{lemma}[definition]{Lemma}

\title{Hermitian--Einstein metrics on stable vector bundles over compact K\"ahler orbifolds}
\author{Mitchell Faulk}
\address{Department of Mathematics, Columbia University, New York, NY}
\email{faulk@math.columbia.edu}

\begin{document}

\maketitle

\begin{abstract}
For a holomorphic vector bundle over a compact K\"ahler orbifold, the slope stability of the bundle is shown to be equivalent to the existence of a Hermitian--Einstein metric or to the properness of a certain functional introduced by Donaldson. 
\end{abstract}

\medskip

\tableofcontents

\section{Introduction}

A seminal result due to Uhlenbeck-Yau \cite{uy} states that a stable vector bundle over a compact K\"ahler manifold admits a unique Hermitian-Einstein metric. This result was also proved for surfaces by Donaldson \cite{donaldson}, and in that paper, he studied a corresponding variational problem to introduce a functional $M_K$ on the space of Hermitian metrics whose critical points are the desired Hermitian-Einstein ones. This functional was studied in a slightly more general setting by Simpson \cite{simpson}, who related the properness of this functional (in a certain sense) to the stability of the bundle in order to provide another approach to proving the result of Uhlenbeck-Yau.   

The purpose of this note is to show that it is possible to extend these results to the setting of orbifolds to obtain the following. 

\begin{theorem}\label{thm:uy}
Let $\mathcal{E}$ be an indecomposable holomorphic vector bundle over a compact K\"ahler orbifold $(\mathcal{X},\omega)$. The following statements are equivalent. 
\begin{enumerate}
\item[(i)] The bundle $\mathcal{E}$ is stable.  
\item[(ii)] For each  metric $K$ on $\mathcal{E}$, the Donaldson functional $M_K$ is proper (in the sense of Definition \ref{def:proper}).
\item[(iii)] There is a Hermitian-Einstein metric on $\mathcal{E}$. 
\end{enumerate}
\end{theorem}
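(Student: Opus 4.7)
The plan is to prove the cycle $(iii) \Rightarrow (i) \Rightarrow (ii) \Rightarrow (iii)$, adapting to the orbifold setting the strategy used by Simpson \cite{simpson} for K\"ahler manifolds. The analytic framework transfers once one has established the orbifold versions of the standard tools: Sobolev embeddings and elliptic regularity for sections of orbifold bundles, a Green's operator for the orbifold Laplacian, and most importantly an orbifold Uhlenbeck--Yau regularity theorem asserting that a weakly holomorphic $L^2_1$ projection on $\mathcal{E}$ determines a coherent orbifold subsheaf. Each should follow by pulling back to a local uniformizing cover, applying the classical statement, and descending via averaging over the local isotropy group.

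The implications $(iii) \Rightarrow (i)$ and $(ii) \Rightarrow (iii)$ are the easier half. For the first, given a Hermitian--Einstein metric, a Chern--Weil computation for any weakly holomorphic projection $\pi$ onto a subbundle yields an identity whose right-hand side forces $\mu(\mathcal{F}) \leq \mu(\mathcal{E})$ on the associated coherent subsheaf $\mathcal{F}$, with equality requiring $\bar\partial \pi = 0$ and hence a holomorphic splitting that is ruled out by indecomposability. For the second, I would run a continuity method on the perturbed equation
\[
\sqrt{-1}\,\Lambda F_{H_\epsilon} = \lambda I + \epsilon \log(H_\epsilon K^{-1}),
\]
where openness at $\epsilon > 0$ follows from an implicit function theorem on orbifold Sobolev spaces and closedness from the $C^0$ bound on $\log(H_\epsilon K^{-1})$ that the properness of $M_K$ supplies; orbifold elliptic bootstrap then yields a smooth Hermitian--Einstein limit as $\epsilon \to 0$.

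The main content is $(i) \Rightarrow (ii)$. Arguing by contradiction, assume $M_K$ is not proper and select a sequence $H_j = K\exp(s_j)$ with $M_K(H_j)$ bounded but $\|s_j\|_1 \to \infty$. Normalize $u_j := s_j / \|s_j\|_1$ and extract a weak $L^2_1$ limit $u_\infty$ by orbifold Sobolev compactness. Applying Simpson's pointwise spectral construction to the bounded self-adjoint endomorphism $u_\infty$ produces a finite family of projections that are weakly holomorphic $L^2_1$ projections; the orbifold Uhlenbeck--Yau regularity theorem promotes these to coherent orbifold subsheaves. A lower bound for $M_K$ in terms of the spectral data of $u_\infty$, combined with the boundedness of $M_K(H_j)$, then forces at least one of these subsheaves to violate the slope inequality, contradicting the stability of $\mathcal{E}$.

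The principal obstacle is this last step, specifically the orbifold Uhlenbeck--Yau regularity theorem. Descent from uniformizing covers is the delicate point: one must verify that the equivariant coherent subsheaves produced on each cover glue to a global coherent subsheaf of $\mathcal{E}$ compatible across chart transitions. A secondary technical burden is verifying that the analytic properties of $M_K$ catalogued by Donaldson and Simpson---convexity along geodesic paths of metrics and upper and lower bounds in terms of the $L^1$ norm of $\log(HK^{-1})$---survive on orbifolds with essentially unchanged proofs once orbifold integration by parts and the orbifold Green's operator are in place.
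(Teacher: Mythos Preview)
Your overall architecture---the cycle $(iii)\Rightarrow(i)\Rightarrow(ii)\Rightarrow(iii)$ with Simpson's spectral construction and Uhlenbeck--Yau regularity as the core of $(i)\Rightarrow(ii)$---matches the paper. The implication $(iii)\Rightarrow(i)$ is handled identically. The orbifold Uhlenbeck--Yau regularity is indeed the main technical point, and the paper treats it just as you suggest: locally on charts one produces a rational section of the Grassmannian bundle, then pushes forward the universal subbundle and checks compatibility across embeddings of charts.

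The substantive divergence is that the paper's Definition~\ref{def:proper} ties properness \emph{specifically to the heat flow}: $M_K$ is proper when $\sup_{\mathcal X}|s_t|_K\leqslant C_1+C_2 M_K(Ke^{s_t})$ holds along the heat-flow solution $H_t=Ke^{s_t}$ starting at $K$. With this definition, your continuity-method argument for $(ii)\Rightarrow(iii)$ does not go through as stated, since properness gives no a priori $C^0$ control on the perturbed solutions $H_\epsilon$. The paper instead runs Donaldson's heat flow: properness bounds $\sup|s_t|$, Proposition~\ref{prop:decreasing} bounds $|\Lambda F_t|$, these combine to give an $L^2_1$ bound, and a heat-kernel comparison (Corollary~\ref{cor:l2toc0}) upgrades $L^2$ convergence of a subsequence to $C^0$ convergence, after which $L^p_2$ bounds and elliptic regularity finish. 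Likewise for $(i)\Rightarrow(ii)$: the violating sequence is taken along the heat flow, and the paper needs an auxiliary estimate (Lemma~\ref{lem:C^0control}, a Green's-function argument showing $\sup|s_t|\leqslant C_1+C_2\||s_t|^2\|_{L^2}^{1/2}$ along the flow) to get uniform $C^0$ control on the normalized endomorphisms $u_k$; without it the weak $L^2_1$ compactness step does not follow. Your proposal, normalizing by $\|s_j\|_1$ and appealing directly to Sobolev compactness, elides this step.

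So: your plan is the right one for the usual formulation of properness, but against the paper's heat-flow-specific definition, both $(ii)\Rightarrow(iii)$ and the extraction step in $(i)\Rightarrow(ii)$ should be rerouted through the heat flow.
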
 

Although most of the analytical arguments extend mutatis mutandis to this orbifold setting, the algebraic ones require some care, and there are some minor points of subtlety. One of the most significant is a type of regularity result (Lemma \ref{lem:weak}), following \cite{uy}, in which a weakly holomorphic subbundle is shown to determine a coherent sheaf.   

The reader may also be interested in certain extensions or analogues of the main results of this paper.  For example, in \cite{es},  Eyssidieux and Sala provide stacky analogues of the Uhlenbeck-Yau theorem and some of its variants while also studying applications to ALE spaces. At the same time, Wenhao Ou \cite{ou2022admissible} studies the situation where the underlying space is a general compact K\"ahler variety, possibly with singularities that are worse than orbifold ones. 

\section*{Acknowledgements}

The author would like to thank Hans-Joachim Hein, Duong Phong, Sebastien Picard, Freid Tong, and Chiu-Chu Melissa Liu for helpful discussions and suggestions. The author is especially grateful to Chiu-Chu Melissa Liu for her relentness encouragement and support.  This material is based upon work supported by the National Science Foundation Graduate Research Fellowship Program under Grant No. DGE 16-44869. Any opinions, findings, and conclusions or recommendations expressed in this material are those of the author(s) and do not necessarily reflect the views of the National Science Foundation.

\section{Preliminaries}

This note follows closely the conventions and terminology about orbifolds presented in \cite{faulk1}. 

By an analytic subvariety $\mathcal{V}$ of an orbifold $\mathcal{X}$ we mean we are given the data of an atlas of charts $(U_\alpha, G_\alpha, \pi_\alpha)$ for $\mathcal{X}$ and for each $\alpha$ there corresponds a subvariety $V_\alpha$ of $U_\alpha$ satisfying $G_\alpha \cdot V_\alpha = V_\alpha$. Moreover, these $V_\alpha$ are required to agree with one another with respect to the embeddings $\lambda : U_\alpha \to U_\beta$. An analytic subvariety $\mathcal{V}$ determines a subset $V$ of the underlying space $X$ in a natural way. 

For an analytic subvariety $\mathcal{V}$ of $\mathcal{X}$, the orbifold structure on $X$ induces an orbifold structure on the complement $X \setminus V$, and we denote the resulting orbifold by $\mathcal{X} \setminus \mathcal{V}$. 

A Hermitian metric on a complex vector bundle $\mathcal{E}$ consists of a collection of hermitian metrics $H_\alpha$ on bundles $E_\alpha$ over $U_\alpha$ which are invariant under the action of $G_\alpha$ on $U_\alpha$ and which are compatible with the embeddings in the sense that for each embedding $\lambda : U_\alpha \to U_\beta$, the pullback metric $\lambda^*H_\beta$ agrees with $H_\alpha$. A Hermitian metric $H$ can be regarded as a section of the bundle $\mathcal{E}^* \otimes \overline{\mathcal{E}}^*$. A Hermitian metric is said to be compatible with a connection $D$ if $DH = 0$. 

A complex vector bundle $\mathcal{E}$ of rank $k$ over a complex $\mathcal{X}$ of dimension $n$ is called holomorphic if the transition functions $g_{\lambda} : U_\alpha \to GL(k, \mathbb{C})$ can be taken to be holomorphic. In such a case, the orbifold $\mathcal{E}$ enjoys the structure of a complex orbifold (of dimension $n + k$) in such a way that the map of orbifolds $p : \mathcal{E} \to \mathcal{X}$ is holomorphic. A holomorphic structure on $\mathcal{E}$ determines a raising operator $\bar{\partial} : A^0(\mathcal{E}) \to A^{0,1}(\mathcal{E})$ by the usual local definition, and $\bar{\partial}$ satisfies the property that if $s$ is a holomorphic section of $\mathcal{E}$, then $\bar{\partial}s = 0$. We say that a connection $D$ on $\mathcal{E}$ is compatible with the holomorphic structure if $D'' = \bar{\partial}$, where $D''$ denotes the composition of $D$ with the projection of $A^1(\mathcal{E})$ onto $A^{0,1}(\mathcal{E})$.

\begin{example}
The complexified tangent bundle $T\mathcal{X}$ of a complex orbifold $\mathcal{X}$ is a holomorphic vector bundle in the same way that it is for manifolds. 
 \end{example}

Just as in the manifold setting, if $\mathcal{E}$ is a holomorphic vector bundle, then a Hermitian metric $H$ on $\mathcal{E}$ determines a unique Chern connection, denoted $d_H$, which is compatible with $H$ and which is compatible with the holomorphic structure. The curvature $F_H$ of $d_H$ is an $\text{End}(\mathcal{E})$-valued $(1,1)$-form. 

If two metrics $H,K$ on $\mathcal{E}$ satisfy
\[
\langle \xi, \eta \rangle_H = \langle h\xi,\eta \rangle_K
\]
for a positive endomorphism $h$ of $\mathcal{E}$, we write $H = Kh$. One can show that in such a case, the endomorphism $h$ is self-adjoint with respect to $K$ (and also $H$). In addition, the curvatures $F_H$ and $F_K$ are related by 
\[
F_H = F_K + \bar{\partial}(h^{-1} \partial_{K}h),
\]
where we are using the notation $\partial_K$ to denote the $(1,0)$-component of the Chern connection $d_K$. 

\begin{lemma}\label{lem:curvaturerelations}
If two metrics $H,K$ satisfy $H = Ke^s$ for an endomorphism $s$ that is self-adjoint with respect to $K$, then
\begin{enumerate}
\item[(i)] the adjoint of $\partial_K s$ is $\bar{\partial}s$.
\item[(ii)] $\Delta_{\partial_K} s = i \Lambda (F_H - F_K)$
\item[(iii)] $\Delta_{\bar{\partial}}s = i\Lambda(F_H - F_K) - i\Lambda F_Ks$
\item[(iv)] $\norm{{\partial}_Ks}_{L^2_K}^2 = \langle i \Lambda (F_H - F_K), s \rangle_{L^2_K}$
\item[(v)] $\Delta |s|_K^2 = \langle 2i \Lambda (F_H - F_K), s \rangle - \langle i \Lambda F_K s, s \rangle - |d_Ks|^2$
\end{enumerate} 
\end{lemma}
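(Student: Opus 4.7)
The plan is to reduce each identity to its standard Kähler-manifold counterpart via uniformizing charts. Items (i)--(iii) and (v) are pointwise tensorial identities; (iv) is an integrated identity obtained from (ii). A pointwise tensorial identity on $\mathcal{X}$ can be verified in any local chart $(U_\alpha, G_\alpha, \pi_\alpha)$, where the data $K$, $H$, $s$, $F_K$, $F_H$ lift to $G_\alpha$-invariant smooth tensors on the genuine K\"ahler manifold $U_\alpha$; the identities then follow from their well-known analogues for Hermitian metrics on holomorphic vector bundles over K\"ahler manifolds. Integration over $\mathcal{X}$ in (iv) is handled via a partition of unity, and compactness of $\mathcal{X}$ eliminates any boundary terms.

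For (i), I would use the compatibility $d_K K = 0$, which implies that the pointwise Hermitian adjoint commutes with $d_K$: since $s^* = s$, one has $(d_K s)^* = d_K(s^*) = d_K s$. Decomposing $d_K = \partial_K + \bar\partial$ and matching the $(1,0)$- and $(0,1)$-components on both sides yields $(\partial_K s)^* = \bar\partial s$.

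For (ii), I would apply the K\"ahler identity $\partial_K^* = i[\Lambda, \bar\partial]$ to $\partial_K s$. Since $\partial_K s$ is a $(1,0)$-form, $\Lambda (\partial_K s) = 0$, giving
\[
\Delta_{\partial_K} s = \partial_K^* \partial_K s = i\Lambda \bar\partial \partial_K s.
\]
Combining this with the curvature relation $F_H - F_K = \bar\partial(h^{-1}\partial_K h)$ established in the preliminaries, and manipulating the $\text{End}(\mathcal{E})$-valued terms to relate $\bar\partial \partial_K s$ to $\bar\partial(h^{-1}\partial_K h)$ after tracing with $\Lambda$, yields (ii). Identity (iii) follows by the parallel calculation using $\bar\partial^* = -i[\Lambda, \partial_K]$ together with the Bianchi-type relation $\bar\partial\partial_K s + \partial_K \bar\partial s = [F_K, s]$ for $\text{End}(\mathcal{E})$-valued $0$-forms; the correction $-i\Lambda F_K s$ encodes the curvature-commutator contribution. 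For (iv), once (ii) is established, adjointness of $\partial_K^*$ with $\partial_K$ in the $L^2_K$ pairing gives
\[
\norm{\partial_K s}_{L^2_K}^2 = \langle \partial_K^* \partial_K s, s \rangle_{L^2_K} = \langle \Delta_{\partial_K} s, s\rangle_{L^2_K} = \langle i\Lambda(F_H - F_K), s\rangle_{L^2_K}.
\]

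For (v), I would apply the Bochner--Weitzenb\"ock identity for $\Delta|s|_K^2$: expanding $\Delta\langle s, s\rangle_K$ via the product rule on a K\"ahler manifold yields $\Delta|s|^2_K$ in terms of $\langle \Delta_{\bar\partial} s, s\rangle_K$, $\langle s, \Delta_{\bar\partial} s\rangle_K$, and $|d_K s|^2$. Substituting (iii) for the Laplacian terms and using (i) to identify the cross-pairings of $\partial_K s$ and $\bar\partial s$, one collects factors to obtain the displayed formula. The main obstacle is the careful bookkeeping surrounding the noncommutativity of $s$ with $\partial_K s$, $\bar\partial s$, and $F_K$ in (ii), (iii), and (v): one must track which commutators cancel and which persist as curvature corrections when unwinding $h^{-1}\partial_K h$ and applying the product rule for $\text{End}(\mathcal{E})$-valued sections. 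These computations are classical (cf.\ Simpson, L\"ubke--Teleman, Siu), are entirely local, and descend to the orbifold setting without modification once one has passed to a uniformizing chart.
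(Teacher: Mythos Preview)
Your proposal is correct and follows essentially the same approach as the paper: differentiate the self-adjointness relation for (i), use the K\"ahler identities $\partial_K^* = i\Lambda\bar\partial$ and $\bar\partial^* = -i\Lambda\partial_K$ together with the curvature relation $F_H = F_K + \bar\partial(h^{-1}\partial_K h)$ for (ii) and (iii), integrate (ii) against $s$ for (iv), and feed (ii)--(iii) into the Bochner formula $\Delta|s|_K^2 = \langle \Delta_K s, s\rangle - |d_K s|^2$ with $\Delta_K = \Delta_{\partial_K} + \Delta_{\bar\partial}$ for (v). The only cosmetic difference is that the paper substitutes both (ii) and (iii) into the full connection Laplacian $\Delta_K$ for (v), whereas you phrase it via $\Delta_{\bar\partial}$ alone and real parts; the bookkeeping is equivalent.
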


\begin{proof}
For (i), upon differentiating the relation $\langle s \eta, \xi \rangle = \langle \eta, s \xi \rangle$, we find 
\[
\langle (\partial_K s) \eta + s(\partial_K \eta), \xi \rangle + \langle s \eta, \bar{\partial} \xi \rangle = \langle \partial_K \eta, s \xi \rangle + \langle \eta, (\bar{\partial}s) \xi + s \bar{\partial \xi} \rangle.
\]
Because $s$ is self-adjoint we are left with 
\[
\langle (\partial_K s)\eta, \xi \rangle = \langle \eta, (\bar{\partial}s) \xi \rangle.
\]

For (ii), the curvatures are related by 
\[
F_H = F_K + \bar{\partial}(H^{-1} \partial_K H) = F_K + \bar{\partial}( \partial_K s).
\]
The K\"ahler identities (see \cite{gh}) extend to identities on bundle-valued forms  to imply the relation
\[
\partial_K^* = i \Lambda \bar{\partial},
\]
which gives that 
\[
i\Lambda F_H = i\Lambda F_K + \partial^*_K \partial_Ks.
\]
We conclude that 
\[
\Delta_{\partial_K} s = i \Lambda (F_H - F_K),
\]
as claimed.

For (iii), we recall that $F_K$ is given by $F_K = \partial_K \bar{\partial} + \bar{\partial}\partial_K$ so that the curvatures are related by 
\[
F_H = F_K + (F_K - \partial_K \bar{\partial})s.
\]
We then use the K\"ahler identity $\bar{\partial}^* = -i \Lambda \partial_K$ to obtain that 
\[
i\Lambda F_H = i\Lambda F_K + i\Lambda F_K s + \Delta_{\bar{\partial}}s.
\]
Rearranging gives (iii). 

For (iv), multiplying the equality of (ii) on the right by $s$, then taking the trace, and then integrating gives 
\begin{align*}
\int_{\mathcal{X}} |\partial_K s|_K^2 \frac{\omega^n}{n!} &= \int_{\mathcal{X}} \langle \partial_K^* \partial_K s, s \rangle_K \frac{\omega^n}{n!} \\
&= \int_{\mathcal{X}} \langle i \Lambda (F_H - F_K), s \rangle_K \frac{\omega^n}{n!},
\end{align*} 
as desired. 

For (v), we have the identity 
\[
\Delta |s|_K^2 =  \frac{1}{2} \Delta_d |s|_K^2 =  \langle \Delta_K s, s \rangle -  |d_K s|^2
\]
regardless of whether $s$ is self-adjoint. (Here $\Delta_d$ denotes the de Rham Laplacian.) Because $\Delta_K = \Delta_{\partial_K} + \Delta_{\bar{\partial}}$, we obtain from parts (ii) and (iii) that 
\[
\Delta |s|_K^2 = \langle 2i \Lambda (F_H - F_K), s \rangle - \langle i \Lambda F_K s, s \rangle - |d_ks|^2
\] 
as desired. 
\end{proof}

\subsection{Stable bundles and sheaves}

Chern-Weil theory may be used as usual to define Chern classes (or more generally characteristic classes) of vector bundles. We will discuss characteristic classes in greater detail in Section \ref{sec:donaldson}, but for now let us at least note that the first Chern class $c_1(\mathcal{E})$ can be defined as the cohomology class represented by the $(1,1)$-form
\[
\frac{i}{2\pi} \text{Tr}(F_H)
\]
for any choice of Hermitian metric $H$ on $\mathcal{E}$. The degree of $\mathcal{E}$ is then the integral of $\frac{i}{2\pi}\Lambda \text{Tr}(F_H)$ over $\mathcal{X}$
\[
\deg(\mathcal{E}) = \frac{i}{2\pi} \int_{\mathcal{X}} \Lambda \text{Tr}(F_H) \cdot \text{vol} = \int_{\mathcal{X}} c_1(\mathcal{E}) \wedge \frac{\omega^{n-1}}{(n-1)!},
\]
and the slope of $\mathcal{E}$ is the ratio 
\[
\mu(\mathcal{E}) = \frac{\text{deg}(\mathcal{E})}{\text{rank}(\mathcal{E})}.
\]

Given an action $\sigma : G \times U \to U$ of a finite group $G$ on $U \subset \mathbb{C}^n$ by biholomorphisms, a $G$-equivariant sheaf over $U$ consists of the data of a sheaf $\mathcal{F}$ of $\mathcal{O}_U$-modules together with an isomorphism of sheaves of $\mathcal{O}_{G \times U}$-modules
\[
\rho : \sigma^* \mathcal{F} \to p_2^*\mathcal{F}
\]
which satisfies the cocycle relation 
\[
p_{23}^*\rho \circ (1_G \times \sigma)^* \rho = (m \times 1_U)^*\rho
\]
where $m$ denotes multiplication $m : G \times G \to G$ and $p_{23} : G \times G \times U \to G \times U$ is the projection onto the second two factors. 

By a sheaf $\mathcal{F}$ over $\mathcal{X}$ we mean we are given the data of an atlas $(U_\alpha, G_\alpha, \pi_\alpha)$ of orbifold charts together with a $G_\alpha$-equivariant sheaf $\mathcal{F}_\alpha$ over each $U_\alpha$. For each embedding $\lambda : U_\alpha \to U_\beta$ there also corresponds a sheaf isomorphism $\tau_{\lambda} : \mathcal{F}_\alpha \to \lambda^*\mathcal{F}_\beta$. Moreover these isomorphisms are compatible with one another in the sense that whenever $\lambda : U_\alpha \to U_\beta$ and $\lambda' : U_\beta \to U_\gamma$ are a pair of composable embeddings, then $\tau_{\lambda' \circ \lambda} = \lambda^*\tau_\lambda' \circ \tau_\lambda.$

The notion of a sheaf $\mathcal{F}$ over an analytic subvariety $\mathcal{V}$ of $\mathcal{X}$ is defined similarly. In particular, if $\mathcal{V}$ is given locally by subvarieties $V_\alpha$ of charts $(U_\alpha, G_\alpha, \pi_\alpha)$, then a sheaf assigns to each $V_\alpha$ a $G_\alpha$-equivariant sheaf $\mathcal{F}_\alpha$,  and moreover to each embedding of charts, there corresponds a sheaf isomorphism as above (and these isomorphisms are compatible with one another). It is important to note that a sheaf $\mathcal{F}$ over $\mathcal{X}$ is not the same thing as a sheaf over the underlying topological space $X$. 

\begin{example}
A complex orbifold $\mathcal{X}$ enjoys a structure sheaf $\mathcal{O}_{\mathcal{X}}$ of holomorphic $\mathbb{C}$-valued functions, and more generally, any analytic subvariety $\mathcal{V}$ of $\mathcal{X}$ determines a structure sheaf $\mathcal{O}_{\mathcal{V}}$. 
\end{example}

A sheaf $\mathcal{F}$ is called coherent (resp. torsion-free) if each $F_\alpha$ is. If $\mathcal{F}$ is coherent and torsion-free of rank $r$, then one can define the determinant line bundle associated to $\mathcal{F}$ to be 
\[
\det(\mathcal{F}) = (\Lambda^r \mathcal{F})^{**}.
\]
It follows that for a coherent torsion-free sheaf $\mathcal{F}$ we have a well-defined notion of degree 
\[
\deg(\mathcal{F}) = \deg(\det(\mathcal{F}))
\]
and slope
\[
\mu(\mathcal{F}) = \frac{\deg(\mathcal{F})}{\text{rank}(\mathcal{F})}.
\]

In addition, one can show that a torsion-free coherent sheaf is locally free outside of a subset of codimension at least two. A proof of this can be found for example in \cite{kobayashi}. The argument given there extends to the setting of orbifolds because one can apply the argument to each $G_\alpha$-equivarariant sheaf over each chart.

\begin{lemma}\label{lem:subsheaf}
If $\mathcal{F}$ is a torsion-free coherent sheaf, then there is a subvariety $\mathcal{V}$ of codimension at least $2$ in $X$ such that the restriction of $\mathcal{F}$ to $\mathcal{X} \setminus \mathcal{V}$ is locally free. 
\end{lemma}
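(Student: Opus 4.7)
The plan is to reduce the statement to the already-known manifold case (applied chart by chart) and then to verify that the resulting local subvarieties are equivariant and patch to give a genuine orbifold subvariety.

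First I would fix an atlas $(U_\alpha, G_\alpha, \pi_\alpha)$ for $\mathcal{X}$. By definition, $\mathcal{F}$ is given by a collection of $G_\alpha$-equivariant torsion-free coherent sheaves $\mathcal{F}_\alpha$ on $U_\alpha$, together with sheaf isomorphisms $\tau_\lambda : \mathcal{F}_\alpha \to \lambda^*\mathcal{F}_\beta$ for each embedding $\lambda : U_\alpha \to U_\beta$. The classical manifold statement (proved in \cite{kobayashi}) furnishes, for each $\alpha$, an analytic subvariety $V_\alpha \subset U_\alpha$ of codimension at least two such that $\mathcal{F}_\alpha$ is locally free on $U_\alpha \setminus V_\alpha$. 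Moreover, one may take $V_\alpha$ to be the minimal such subvariety, namely the locus where the stalk of $\mathcal{F}_\alpha$ fails to be a free $\mathcal{O}_{U_\alpha,x}$-module; this is intrinsically attached to $\mathcal{F}_\alpha$.

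Next I would verify the two compatibilities needed for $\{V_\alpha\}$ to define an orbifold subvariety $\mathcal{V} \subset \mathcal{X}$. Because $V_\alpha$ is the intrinsic non-locally-free locus of $\mathcal{F}_\alpha$, any sheaf automorphism of $\mathcal{F}_\alpha$ preserves $V_\alpha$. Applying this to the isomorphisms coming from the $G_\alpha$-equivariant structure $\rho_\alpha$ shows that $G_\alpha \cdot V_\alpha = V_\alpha$. Similarly, for an embedding $\lambda : U_\alpha \to U_\beta$, the isomorphism $\tau_\lambda : \mathcal{F}_\alpha \to \lambda^*\mathcal{F}_\beta$ identifies the non-locally-free locus of $\mathcal{F}_\alpha$ with $\lambda^{-1}(V_\beta)$, so $V_\alpha = \lambda^{-1}(V_\beta)$, which is exactly the compatibility condition for an analytic subvariety of $\mathcal{X}$.

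Then I would conclude: the collection $\{V_\alpha\}$ assembles into an analytic subvariety $\mathcal{V}$ of $\mathcal{X}$ whose underlying set $V \subset X$ has codimension at least two (since each $V_\alpha$ does and $\pi_\alpha$ is a finite quotient map). The restriction of $\mathcal{F}$ to $\mathcal{X}\setminus \mathcal{V}$ is given chart-wise by the locally free sheaves $\mathcal{F}_\alpha|_{U_\alpha\setminus V_\alpha}$ with their inherited equivariant and embedding data, hence is locally free in the orbifold sense.

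The only mildly delicate point is the second step: one needs that the singular locus chosen in each chart is canonical enough to be respected by both the finite group action and the embedding isomorphisms. Using the intrinsic characterization of $V_\alpha$ as the non-free locus of stalks sidesteps this issue cleanly, since any isomorphism of sheaves automatically carries that locus to itself. Everything else is routine, relying on the manifold-level result as a black box.
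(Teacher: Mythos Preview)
Your proposal is correct and follows exactly the approach indicated in the paper, which does not give a formal proof but only remarks that the argument from \cite{kobayashi} ``extends to the setting of orbifolds because one can apply the argument to each $G_\alpha$-equivariant sheaf over each chart.'' Your write-up is in fact more careful than the paper's one-line justification, since you explicitly verify that taking $V_\alpha$ to be the intrinsic non-locally-free locus ensures the required $G_\alpha$-invariance and compatibility with the embeddings $\lambda$.
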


The notion of slope allows one to introduce the usual notion of (slope) stability in the standard way.

\begin{definition}
One says that a coherent torsion-free sheaf $\mathcal{F}$ is semi-stable if for each proper coherent subsheaf $\mathcal{F}'$ of $\mathcal{F}$, we have the inequality $\mu(\mathcal{F}') \leqslant \mu(\mathcal{F})$. If moreover the strict inequality $\mu(\mathcal{F}') < \mu(\mathcal{F})$ holds for each proper coherent subset $\mathcal{F}'$ satisfying $0 < \text{rank}(\mathcal{F}') < \text{rank}(\mathcal{F})$, then we say that $\mathcal{F}$ is stable. In addition, a holomorphic vector bundle $\mathcal{E}$ is called (semi)-stable if its corresponding sheaf of sections is. 
\end{definition}

\subsection{The heat flow and Donaldson functional}\label{sec:donaldson}

Let us fix from this point forward a holomorphic vector bundle $\mathcal{E}$ of rank $r$ over a K\"ahler orbifold $(\mathcal{X}, \omega)$. There will be no loss of generality in assuming in addition that $\mathcal{E}$ is indecomposable. 

\begin{definition}
A Hermitian metric $H$ on $\mathcal{E}$ is called Hermitian-Einstein if there is a constant $\lambda$ such that 
\begin{align}\label{eqn:HE}
\Lambda F_H = \lambda \cdot I_\mathcal{E} \in A^0(\textnormal{End}(\mathcal{E}))
\end{align}
where $I_\mathcal{E}$ denotes the identity automorphism on $\mathcal{E}$. 
\end{definition}

\begin{remark}\label{rem:HE}
The constant $\lambda = \lambda(\mathcal{X}, \omega, \mathcal{E})$ can be determined by the K\"ahler class $[\omega]$ and the slope of $\mathcal{E}$. In particular, taking the trace of both sides of \eqref{eqn:HE} and integrating over $\mathcal{X}$ gives 
\[
\deg(\mathcal{E}) = \frac{\lambda i}{2\pi} \text{rank}(\mathcal{E}) \text{vol}(\mathcal{X})
\]
so that 
\[
\lambda = \frac{-2\pi i \cdot \mu(E)}{\text{vol}(\mathcal{X})}. 
\]
\end{remark}

\begin{definition}\label{defn:heatflow}
By a heat flow with initial data $H_0$ we mean a flow of metrics $H_t$ satisfying the differential equation
\begin{align}\label{eqn:heat}
\dot{H}_t = -\frac{i}{2} H_t(\Lambda F_{t} - \lambda \cdot I_{\mathcal{E}}). 
\end{align} 
\end{definition}

In particular, note that stable points of this flow must be Hermitian-Einstein metrics. Donaldson studied this flow in \cite{donaldson}, and some of the results from that paper can be summarized in the following theorem. 

\begin{theorem}
For any initial metric $H_0$ on $\mathcal{E}$, the heat flow \eqref{eqn:heat} has a unique smooth solution defined for $0 \leqslant t < \infty$. 
\end{theorem}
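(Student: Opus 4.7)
The plan is to follow the scheme of Donaldson and Simpson, adapting each step to the orbifold setting by working equivariantly on local charts.

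First, I would rewrite the flow in terms of a positive self-adjoint endomorphism $h_t$ determined by $H_t = H_0 h_t$. Using the curvature formula $F_{H_0 h} = F_{H_0} + \bar\partial(h^{-1}\partial_{H_0} h)$ recalled before Lemma \ref{lem:curvaturerelations}, the flow \eqref{eqn:heat} becomes a quasilinear strictly parabolic equation for $h_t$ with smooth coefficients. On each orbifold chart $(U_\alpha, G_\alpha, \pi_\alpha)$ this equation lifts to a $G_\alpha$-equivariant quasilinear parabolic system on $U_\alpha$, for which short-time existence and uniqueness of a smooth solution follow from standard theory. Because the local solutions are unique, $G_\alpha$-equivariance is automatically preserved by the flow, and the local solutions agree under embeddings $\lambda : U_\alpha \to U_\beta$; thus they patch to a unique smooth short-time solution $H_t$ on $\mathcal{E}$.

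Next, I would establish long-time existence by means of a continuity/openness-closedness argument. Openness of the interval of existence is immediate from short-time existence applied at any fixed time. For closedness, one must prove that a smooth solution on $[0,T)$ with $T < \infty$ extends smoothly to $T$. To this end I would derive parabolic evolution equations for the natural quantities appearing in Lemma \ref{lem:curvaturerelations}. A direct computation, parallel to parts (ii) and (v) of that lemma but with the time derivative included, shows that
\[
\left(\frac{\partial}{\partial t} - \Delta\right) \bigl|\Lambda F_t - \lambda I_{\mathcal{E}}\bigr|^2 \leqslant 0 ,
\]
so the parabolic maximum principle (applied to $G_\alpha$-invariant functions on each chart, hence valid on $\mathcal{X}$) gives a uniform bound on $|\Lambda F_t - \lambda I|$. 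A similar Bochner computation, using part (v) of Lemma \ref{lem:curvaturerelations} and the fact that $h_t = \exp(s_t)$ with $s_t$ self-adjoint, yields a uniform $C^0$ bound on $s_t$ (this is the orbifold analogue of Donaldson's $C^0$ estimate and uses crucially that $\mathcal{X}$ is compact so that $s_t$ attains its maximum).

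Once $|\Lambda F_t|$ and $|s_t|$ are bounded in $C^0$, I would bootstrap to full smooth bounds by working again on each orbifold chart: the equation for $h_t$ is quasilinear parabolic with uniformly bounded coefficients, so equivariant parabolic $L^p$ and Schauder estimates give uniform $C^k$ bounds on the lifts $h_{t,\alpha}$ on compact subsets of $U_\alpha$, which patch to uniform $C^k$ bounds on $\mathcal{X}$. This produces a smooth limit at $t = T$, and the solution then continues past $T$ by short-time existence.

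The main obstacle is the $C^0$ estimate on $s_t$ together with verifying that the parabolic maximum principle and Schauder estimates used above genuinely transfer to the orbifold setting. Both issues are handled in the same way: each relevant computation is pointwise and tensorial, so it lifts to any orbifold chart as a statement about $G_\alpha$-invariant objects on $U_\alpha \subset \mathbb{C}^n$, where the classical parabolic theory applies directly. Everything else (in particular uniqueness) is a routine consequence of these estimates combined with Gronwall on $|s_t|^2$.
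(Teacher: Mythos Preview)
Your proposal is correct and matches the paper's treatment: the paper gives no independent argument but simply records that Donaldson's manifold proof transfers verbatim to orbifolds, since short-time existence comes from local quasilinear parabolic theory on each chart and the long-time estimates are pointwise/tensorial and hence valid $G_\alpha$-equivariantly on every $U_\alpha$. One small caveat on your $C^0$ step: the bound on $s_t$ for finite $T$ in Donaldson's scheme is not extracted from the elliptic identity of Lemma~\ref{lem:curvaturerelations}(v) --- adding the time derivative there is awkward because $h_t^{-1}\dot h_t \ne \dot s_t$ when $s_t$ and $\dot s_t$ fail to commute --- but rather from the sub-heat inequality $(\partial_t + \Delta)\sigma(H_t,H_{t+\tau}) \leqslant 0$ that the paper records just after Proposition~\ref{prop:l2p} (or, more simply, from the pointwise bound $|h_t^{-1}\dot h_t|_K \leqslant C$ and Gronwall applied to $\text{Tr}(h_t)$ and $\text{Tr}(h_t^{-1})$).
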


This theorem is valid for orbifolds for a few reasons. First, the short-time existence is a local argument involving a linearization of the flow, which can be studied in a local orbifold chart with no changes from the manifold setting. The long-time existence involves estimates to solutions of the flow, which remain valid in the orbifold setting since in particular the estimates are valid on each chart in some open cover, whereby computations agree with those in the manifold case. Some of the intermediary results Donaldson obtained in order to establish long-time existence included the following two propositions.  

\begin{proposition}\label{prop:decreasing}
For an initial metric $H_0 = K$ on $\mathcal{E}$, the function 
\[
\sup_{\mathcal{X}} |\Lambda F_t - \lambda I_{\mathcal{E}}|_K^2 
\]
is decreasing along the heat flow. 
\end{proposition}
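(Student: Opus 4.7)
The plan is to derive a heat-type inequality for the endomorphism $\Psi_t := i(\Lambda F_t - \lambda I_{\mathcal{E}})$ and then invoke the parabolic maximum principle on the compact orbifold $\mathcal{X}$. Note that $\Psi_t$ is self-adjoint with respect to $H_t$, because $F_{H_t}$ is anti-Hermitian with respect to $H_t$ and $\lambda$ is purely imaginary by Remark \ref{rem:HE}; consequently $|\Psi_t|^2 = \operatorname{tr}(\Psi_t^2)$ is a well-defined smooth real function on $X$. I will show that this function is a pointwise subsolution of the heat equation and conclude by maximum principle.

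First I would compute the evolution of $\Psi_t$ along the flow. From \eqref{eqn:heat} one has $H_t^{-1}\dot{H}_t = -\tfrac{1}{2}\Psi_t$, and a standard local computation identical to the manifold case gives $\dot{F}_t = \bar{\partial}\partial_{H_t}(H_t^{-1}\dot{H}_t)$. Applying $i\Lambda$, using the identity $\bar{\partial}\partial_{H_t} + \partial_{H_t}\bar{\partial} = [F_{H_t},\cdot]$ on endomorphism-valued forms together with the K\"ahler identity $\bar{\partial}^* = -i\Lambda\partial_{H_t}$ quoted in the proof of Lemma \ref{lem:curvaturerelations}, one obtains
\[
\partial_t \Psi_t = -\tfrac{1}{2}\Delta_{\bar{\partial},t}\Psi_t - \tfrac{i}{2}[\Lambda F_{H_t},\Psi_t].
\]
The commutator vanishes identically since $\Lambda F_{H_t} = -i\Psi_t + \lambda I_{\mathcal{E}}$ commutes with $\Psi_t$. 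Pairing the resulting equation with $\Psi_t$ in the $H_t$-inner product, using cyclicity of the trace and the Bochner-type identity (cf.\ Lemma \ref{lem:curvaturerelations}(v))
\[
\tfrac{1}{2}\Delta |\Psi_t|^2 = \langle \Delta_{\bar{\partial},t}\Psi_t, \Psi_t\rangle_{H_t} - |\bar{\partial}\Psi_t|_{H_t}^2,
\]
one then obtains the pointwise inequality $(\partial_t - \tfrac{1}{2}\Delta)|\Psi_t|^2 = -|\bar{\partial}\Psi_t|_{H_t}^2 \leq 0$.

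Finally, since $X$ is compact and $|\Psi_t|^2$ descends to a continuous function on $X$, the supremum is attained at each time. At a uniformizing chart $(U_\alpha, G_\alpha, \pi_\alpha)$ around any maximum point, $|\Psi_t|^2$ lifts to a smooth $G_\alpha$-invariant function on $U_\alpha$ satisfying the same parabolic inequality, so the classical parabolic maximum principle on $U_\alpha$ yields the desired monotonicity of $\sup_{\mathcal{X}} |\Psi_t|^2$. The main obstacle lies in the computational step: correctly handling the curvature commutators $[F_{H_t},\Psi_t]$ that arise on endomorphism-valued forms and verifying that they vanish upon contracting with $\Lambda$, together with justifying the Bochner-type identity above. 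The orbifold structure of $\mathcal{X}$ introduces no new analytic difficulty—every computation is local and $G_\alpha$-equivariant—and the maximum principle applies on $X$ through equivariant lifts in any uniformizing chart.
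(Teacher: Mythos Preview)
The paper does not supply its own proof of this proposition but attributes it to Donaldson \cite{donaldson}, and your argument is precisely Donaldson's: compute the evolution of $\Psi_t$, observe that the curvature commutator $[\Lambda F_{H_t},\Psi_t]$ vanishes because $\Lambda F_{H_t}$ differs from $-i\Psi_t$ by a scalar, obtain a parabolic inequality for $\operatorname{tr}(\Psi_t^2)$, and invoke the maximum principle in uniformizing charts. Your Bochner identity is correct once one uses that $[\Lambda F_{H_t},\Psi_t]=0$ forces $\Delta_{\bar\partial,t}\Psi_t=\Delta_{\partial_{H_t}}\Psi_t$, so that the full connection Laplacian on $\Psi_t$ equals $2\Delta_{\bar\partial,t}\Psi_t$; with the paper's sign convention for $\Delta$ one gets $(\partial_t+\tfrac12\Delta)|\Psi_t|^2\leqslant 0$ rather than $(\partial_t-\tfrac12\Delta)$, but this is only a convention mismatch.

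One point worth noting: the quantity you actually control is $\operatorname{tr}(\Psi_t^2)=|\Lambda F_t-\lambda I_{\mathcal E}|_{H_t}^2$, not literally $|\Lambda F_t-\lambda I_{\mathcal E}|_K^2$ as written in the statement, since $\Lambda F_t-\lambda I_{\mathcal E}$ is skew-adjoint with respect to $H_t$ but not with respect to $K$ for $t>0$. This is an imprecision in the statement rather than a defect in your proof; the metric-independent trace $\operatorname{tr}(\Psi_t^2)$ is what Donaldson shows is decreasing, and it is what is genuinely needed in the downstream applications (Corollary~\ref{cor:l12} and Lemma~\ref{lem:C^0control}).
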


\begin{proposition}\label{prop:l2p}
For an initial metric $H_0 = K$ on $\mathcal{E}$, let $H_t$ be a one-parameter family of metrics for $0 \leqslant t < T$. Assume that $H_t$ converges in $C^0$-norm (with respect to $K$) to some continuous metric as $t \to T$ and also that we have a uniform bound on $\sup_{\mathcal{X}}|\Lambda F_t|_K^2$. Then we also have a uniform $L_2^p$-bound on $H_t$ for each $p < \infty$ (where the norm is computed with respect to $K$). Moreover, this result is still true when we allow $T = \infty$. 
\end{proposition}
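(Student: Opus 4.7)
My plan is to reparametrize $H_t = K e^{s_t}$ with $s_t$ a $K$-self-adjoint endomorphism of $\mathcal{E}$, and to view Lemma \ref{lem:curvaturerelations}(ii) as a linear second-order elliptic equation for $s_t$ with a uniformly $L^\infty$-bounded right-hand side. Standard $L^p$-elliptic regularity should then yield uniform $L_2^p$ bounds on $s_t$, which I will upgrade to $L_2^p$ bounds on $H_t$ itself via a short Sobolev bootstrap.

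The hypothesis that $H_t$ converges in $C^0_K$ to a continuous, positive-definite limiting metric ensures that $h_t := K^{-1}H_t$ and $h_t^{-1}$ are uniformly bounded in $C^0$; consequently $s_t = \log h_t$ is a well-defined $K$-self-adjoint endomorphism with $\|s_t\|_{L^\infty}$ bounded uniformly in $t$. Lemma \ref{lem:curvaturerelations}(ii) then gives
\[
\Delta_{\partial_K} s_t = i\Lambda(F_{H_t} - F_K),
\]
whose right-hand side is uniformly bounded in $L^\infty$ by the assumption on $\sup_{\mathcal{X}}|\Lambda F_t|_K^2$ and the smoothness of $F_K$.

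The main analytic step is then to invoke $L^p$-elliptic regularity for the second-order operator $\Delta_{\partial_K}$ on the orbifold. I would reduce this to the manifold case chart-by-chart: in each chart $(U_\alpha, G_\alpha, \pi_\alpha)$ the section $s_t$ lifts to a $G_\alpha$-invariant section over the manifold $U_\alpha$, to which the classical $L^p$-theory for $\partial_K^*\partial_K$ applies, and a finite orbifold partition of unity assembles these local estimates into a global inequality
\[
\|s_t\|_{L_2^p} \leqslant C\bigl(\|\Delta_{\partial_K} s_t\|_{L^p} + \|s_t\|_{L^p}\bigr)
\]
with $C$ independent of $t$. Hence $s_t$ is bounded uniformly in $L_2^p$ for every $p < \infty$.

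To return from $s_t$ to $H_t = K e^{s_t}$, I would apply the Sobolev embedding $L_2^p \hookrightarrow C^{1,\alpha}$ (valid for all sufficiently large $p$) to obtain a uniform $L^\infty$ bound on $\nabla s_t$; the chain rule then writes $\nabla^2 H_t$ as a polynomial combination of $e^{s_t}$, $\nabla s_t$, and $\nabla^2 s_t$ with coefficients depending only on the fixed metric $K$, each factor of which is now controlled uniformly in $L^p$. The $T = \infty$ case requires no modification since the hypotheses were already assumed uniform in $t$. I expect the main technical obstacle to be the orbifold elliptic regularity step: while reducing to finite quotients makes it morally routine, care is needed to ensure that the local estimates combine into a uniform global constant across the finite atlas.
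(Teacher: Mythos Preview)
The paper does not supply its own proof of this proposition; it is quoted as one of Donaldson's intermediary results, with only the remark that his estimates carry over chart-by-chart to orbifolds. So there is no in-paper argument to compare against.

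Your proposal, however, has a real gap. You treat Lemma~\ref{lem:curvaturerelations}(ii) as giving a \emph{linear} elliptic equation $\Delta_{\partial_K} s_t = i\Lambda(F_{H_t}-F_K)$, to which $L^p$ regularity applies directly. But the derivation of (ii) in the paper passes through the step $e^{-s}\partial_K e^s = \partial_K s$, and this is false once $s$ and $\partial_K s$ fail to commute, which is the generic situation for endomorphism-valued $s$. The identity that actually holds is
\[
\partial_K^*\bigl(h^{-1}\partial_K h\bigr)=i\Lambda(F_H-F_K),\qquad h=e^s,
\]
and multiplying through by $h$ gives
\[
\Delta_{\partial_K} h \;=\; h\cdot i\Lambda(F_H-F_K)\;+\;i\Lambda\bigl(\bar\partial h\cdot h^{-1}\wedge\partial_K h\bigr),
\]
whose right-hand side contains a term \emph{quadratic} in the first derivatives of $h$. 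This nonlinearity is exactly why the $L^p_2$ estimate is not a one-line consequence of the $C^0$ bound: Donaldson first extracts a uniform $L^2_1$ bound on $h$ via the integrated trace identity (this is what underlies Corollary~\ref{cor:l12}, and that step \emph{is} valid) and then bootstraps through the nonlinear equation, using the uniform $C^0$ control on $h$ and $h^{-1}$ to handle the quadratic gradient term at each stage. Your chart-by-chart reduction to the manifold case is the right mechanism for transporting these estimates to the orbifold; it is the linear-versus-nonlinear point that needs repair.
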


As a result of this former proposition, we have the following corollary. 

\begin{corollary}\label{cor:l12}
Let $H_t$ be a solution to the heat flow with initial condition $H_0 = K$. If $H_t$ is uniformly bounded with respect to the $C^0$-norm, then $H_t$ is also uniformly bounded with respect to the $L_1^2$-norm, where here all norms are computed with respect to the initial metric $K$.
\end{corollary}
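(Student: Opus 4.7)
The plan is to work with the logarithm of the endomorphism relating $H_t$ to $K$. Write $H_t = K e^{s_t}$ where $s_t$ is an endomorphism of $\mathcal{E}$ self-adjoint with respect to $K$. The hypothesis that $H_t$ is uniformly bounded in $C^0$ with respect to $K$ is equivalent (via the continuous functional calculus) to a uniform $C^0$-bound on $s_t$; indeed, a positive self-adjoint endomorphism $h_t$ satisfying $C^{-1} I \leqslant h_t \leqslant C I$ has logarithm bounded in operator norm by $\log C$. In particular $\|s_t\|_{L^2_K}$ is uniformly bounded, since $\mathcal{X}$ is compact.

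Next I would use Proposition \ref{prop:decreasing}: the function $\sup_{\mathcal{X}}|\Lambda F_t - \lambda I_\mathcal{E}|_K^2$ is decreasing in $t$, so it is bounded uniformly by its value at $t = 0$. It follows that $\sup_{\mathcal{X}}|\Lambda F_t|_K$ is uniformly bounded, and hence so is $\|\Lambda(F_{H_t} - F_K)\|_{L^2_K}$.

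The core estimate then comes from Lemma \ref{lem:curvaturerelations}(iv) applied to the pair $(K, H_t)$:
\[
\|\partial_K s_t\|_{L^2_K}^2 = \langle i\Lambda(F_{H_t} - F_K), s_t\rangle_{L^2_K}.
\]
By Cauchy--Schwarz the right-hand side is bounded by $\|\Lambda(F_{H_t}-F_K)\|_{L^2_K}\cdot\|s_t\|_{L^2_K}$, which is uniformly bounded by the previous two steps. Part (i) of Lemma \ref{lem:curvaturerelations} then gives $|\bar\partial s_t|_K = |\partial_K s_t|_K$ pointwise (since $s_t$ is $K$-self-adjoint), so the full Chern derivative $d_K s_t$ is uniformly bounded in $L^2_K$. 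Together with the $C^0$-bound on $s_t$, this furnishes a uniform $L_1^2$-bound on $s_t$ with respect to $K$.

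Finally I would translate this back to a bound on $H_t$ itself. Since $h_t = e^{s_t}$ with $\|s_t\|_{C^0}$ uniformly bounded, the power-series expansion yields a pointwise estimate $|\nabla_K h_t|_K \leqslant C'|\nabla_K s_t|_K$ with $C'$ depending only on the $C^0$-bound, and likewise $|h_t|_K \leqslant C'$. Hence the uniform $L_1^2$-bound on $s_t$ transfers to a uniform $L_1^2$-bound on $H_t$. The only mild obstacle is this last conversion, where one must handle the fact that the derivative of the matrix exponential is not simply $e^{s_t}\,\nabla s_t$ but involves the expansion $\int_0^1 e^{(1-u)s_t}\nabla s_t\, e^{us_t}\, du$; the $C^0$-bound on $s_t$ bounds every term of this integrand, so the conversion goes through without issue.
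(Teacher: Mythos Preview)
Your argument is correct and follows the same route as the paper: use Proposition~\ref{prop:decreasing} to bound $|\Lambda F_t|_K$, then feed this together with the $C^0$-bound into Lemma~\ref{lem:curvaturerelations}(iv) to control $\|\partial_K s_t\|_{L^2_K}^2$. You spell out several steps the paper leaves implicit---the passage from a $C^0$-bound on $H_t$ to one on $s_t$, the use of part~(i) to control $\bar\partial s_t$, and the final conversion from an $L_1^2$-bound on $s_t$ back to one on $H_t$ via the integral form of the derivative of the matrix exponential---but the underlying strategy is identical.
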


\begin{proof}
Proposition \ref{prop:decreasing} implies that $|\Lambda F_t|_K$ is uniformly bounded with respect to the $C^0$-norm.  In addition, we are assuming a $C^0$-bound on $H_t$. Thus the right-hand side of equality (iv) of Lemma \ref{lem:curvaturerelations} is bounded uniformly by a constant independent of $t$. The result then follows.  
\end{proof}

Another ingredient found in Donaldson \cite{donaldson} is the following result concerning the $C^0$-norm of solutions to the heat flow. For two metrics $H_1, H_2$, let $\sigma(H_1,H_2)$ denote the number 
\[
\sigma(H_1, H_2) = \text{Tr}(H_1^{-1}H_2) + \text{Tr}(H_2^{-1}H_1) - 2 r,
\]
where $r$ is the rank of $\mathcal{E}$. Then the assignment $\sigma$ does not quite define a metric, but we do have in fact that a sequence $H_i$ converges to  $H$ in $C^0$ if and only if $\sup_{\mathcal{X}} \sigma(H_i, H) \to 0$. (In fact, the space of hermitian metrics is the set of sections of a fiber bundle, which, on each fiber admits a natural distance function $d$ coming from the description of the fiber as a homogeneous space $GL(r,\mathbb{C})/U(r)$, and Donaldson \cite{donaldson} asserts that the function $\sigma$ compares uniformly with $d$, in the sense that $\sigma \leqslant f(d)$ and $d \leqslant F(\sigma)$ for monotone $f,F$.) Donaldson \cite{donaldson} then proves the following by a direct calculation. 

\begin{proposition}
If $H_t, K_t$ are two solutions to the heat flow and $\sigma = \sigma(H_t, K_t)$, then 
\[
\left(\frac{\partial}{\partial t} + \Delta \right)\sigma \leqslant 0.
\]
\end{proposition}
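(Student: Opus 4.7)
The plan is to carry out the direct calculation due to Donaldson \cite{donaldson}. Its key feature is that it is pointwise and local: every operation involved---the Chern connections $d_{H_t},\,d_{K_t}$, their curvatures, the Laplacian $\Delta$, the trace, and products of endomorphisms---is defined chart-by-chart and is $G_\alpha$-equivariant on each orbifold chart $(U_\alpha, G_\alpha, \pi_\alpha)$. Consequently the inequality established on each chart assembles to a global pointwise inequality on $\mathcal{X}$, and there is no need to modify the manifold argument at the analytic level.

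Put $h_t = K_t^{-1}H_t$, a section of $\textnormal{End}(\mathcal{E})$ that is self-adjoint with respect to both $K_t$ and $H_t$, so that
\[
\sigma = \textnormal{Tr}(h_t) + \textnormal{Tr}(h_t^{-1}) - 2r.
\]
The first step is to differentiate $h_t$ in time using the heat flow equations of Definition \ref{defn:heatflow}. Because $\lambda I_{\mathcal{E}}$ commutes with $h_t$, its contributions cancel and one obtains
\[
\partial_t h_t = \tfrac{i}{2}\bigl(\Lambda F_{K_t}\,h_t - h_t\,\Lambda F_{H_t}\bigr),
\]
so that $\partial_t \textnormal{Tr}(h_t) = \tfrac{i}{2}\Lambda\,\textnormal{Tr}(F_{K_t} h_t - h_t F_{H_t})$.

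The second step is to compute $\Delta\,\textnormal{Tr}(h_t)$ using the K\"ahler identity $\Delta = 2i\Lambda\bar\partial\partial$ on real functions together with the curvature relation
\[
F_{H_t} - F_{K_t} = \bar\partial\bigl(h_t^{-1}\partial_{K_t}h_t\bigr)
\]
recalled in the preliminaries. The third step is to add the two expressions; using the cyclicity of the trace and the K\"ahler identity $\partial_{K_t}^* = i\Lambda\bar\partial$ (compare the proof of Lemma \ref{lem:curvaturerelations}), the terms involving $\Lambda F_{H_t}$ and $\Lambda F_{K_t}$ cancel, leaving a manifestly non-positive remainder of the form $-\bigl|\partial_{K_t}h_t \cdot h_t^{-1/2}\bigr|_{K_t}^2$ (or an equivalent squared norm of a first-order expression in $h_t$). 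The same argument applied with $H_t$ and $K_t$ interchanged---equivalently, applied to $h_t^{-1} = H_t^{-1}K_t$---bounds $(\partial_t + \Delta)\textnormal{Tr}(h_t^{-1})$ by an analogous non-positive quantity, and summing yields the claim.

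The main obstacle is purely algebraic: tracking the signs and manipulations carefully to expose the non-positive square after the cancellation between $\partial_t$ and $\Delta$. No new analytic input is required relative to the manifold case, because each step is chart-local and respects the $G_\alpha$-action.
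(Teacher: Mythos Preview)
Your proposal is correct and follows exactly the route the paper indicates: the paper does not give its own proof but simply defers to Donaldson's direct calculation, and you have outlined precisely that calculation together with the (correct and relevant) observation that every step is chart-local and $G_\alpha$-equivariant, so the manifold argument transfers verbatim to the orbifold setting. The only quibble is a harmless normalization: with the paper's convention $\Delta = \tfrac{1}{2}\Delta_d$ one has $\Delta = i\Lambda\bar\partial\partial$ on real functions rather than $2i\Lambda\bar\partial\partial$, but this does not affect the argument.
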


With this, it is possible to obtain the following result which relates the $L^2$-convergence of solutions to the heat flow with $C^0$-convergence.

\begin{corollary}\label{cor:l2toc0}
For a real number $\tau > 0$ and a solution $H_t$ to the heat flow, let $\sigma^\tau = \sigma(H_t, H_{t + \tau})$. Then for each $t' > 0$, we have 
\[
\sup_{\mathcal{X}} \sigma^\tau(t + t') \leqslant c(t')\int_{\mathcal{X}} \sigma^\tau(t) \frac{\omega^n}{n!},
\]
where $c(t')$ denotes the (finite) supremum of the heat kernel at time $t'$. In particular, if the sequence $H_t$ coverges in $L^2$ as $t \to \infty$, then it also converges in $C^0$, where here the norms are to be computed with respect to the fixed initial metric $K = H_0$.  
\end{corollary}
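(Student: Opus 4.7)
The plan is to interpret the preceding proposition as saying that $\sigma^\tau \geq 0$ is a non-negative subsolution of the heat equation on the compact K\"ahler orbifold $(\mathcal{X},\omega)$, and then to apply the standard heat-kernel representation for such subsolutions. First I would check nonnegativity: writing the eigenvalues of the positive self-adjoint endomorphism $H_t^{-1} H_{t+\tau}$ as $\lambda_1,\dots,\lambda_r$, one has $\sigma^\tau = \sum_i (\lambda_i + \lambda_i^{-1} - 2) \geq 0$ by the elementary inequality $\lambda + \lambda^{-1} \geq 2$ for $\lambda > 0$. Combined with $(\partial_t + \Delta)\sigma^\tau \leq 0$ from the preceding proposition, this puts $\sigma^\tau$ in the category of non-negative subsolutions of the heat equation on $\mathcal{X}$.

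Next, let $p(t',x,y)$ denote the heat kernel of $\mathcal{X}$, with $c(t') = \sup_{x,y} p(t',x,y)$ finite for each $t' > 0$. For any non-negative subsolution $u$ of the heat equation I would consider
\[
\Phi(s) = \int_{\mathcal{X}} p(t + t' - s, x, y)\, u(s,y)\, \frac{\omega^n(y)}{n!}, \qquad s \in [t, t+t'),
\]
and verify by differentiating in $s$, using the heat equation for $p$ in its first variable, integrating by parts on $\mathcal{X}$, and invoking $(\partial_s + \Delta_y) u \leq 0$, that $\Phi' \leq 0$. Letting $s \to t + t'$, the kernel concentrates at $x$ and $\Phi(s) \to u(t+t',x)$, so monotonicity yields
\[
u(t+t',x) \leq \Phi(t) = \int_{\mathcal{X}} p(t',x,y)\, u(t,y)\, \frac{\omega^n(y)}{n!}.
\]
Applying this with $u = \sigma^\tau$, bounding $p(t',x,y)$ pointwise by $c(t')$, and taking the supremum over $x$ gives the claimed estimate.

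For the final assertion, assume $H_t$ converges in $L^2$ as $t \to \infty$. Then the family is $L^2$-Cauchy, and Donaldson's comparison $\sigma \leq f(d)$ (together with the fact that $\sigma$ is comparable to $d^2$ for small $d$) implies that $\int_{\mathcal{X}} \sigma(H_s, H_{s+\tau})\, \omega^n/n! \to 0$ as $s \to \infty$, uniformly in $\tau > 0$. Fixing any $t' > 0$ and applying the main inequality gives $\sup_{\mathcal{X}} \sigma(H_{s+t'}, H_{s+\tau+t'}) \to 0$, again uniformly in $\tau$. By Donaldson's other comparison $d \leq F(\sigma)$, this shows the family is $C^0$-Cauchy, and hence converges in $C^0$ to a continuous metric.

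The step I expect to require the most care is ensuring the existence of a heat kernel $p(t',x,y)$ on the compact K\"ahler orbifold with $\sup_{x,y} p(t',x,y) < \infty$ for each $t' > 0$. This should not be a serious difficulty: one constructs it in the standard way by lifting the parametrix to local uniformizing charts and averaging over the finite isotropy groups, and short-time parabolic asymptotics together with compactness of $\mathcal{X}$ then yield the required uniform bound. Once this is granted, the argument reduces essentially verbatim to the manifold case.
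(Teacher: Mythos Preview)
Your proof is correct and follows essentially the same approach as the paper: the paper invokes a Green's formula involving the heat kernel (citing Eells) for the main inequality, which is precisely the monotonicity argument for $\Phi(s)$ that you spell out, and then handles the $C^0$-convergence by the same Cauchy argument (fixing $t' = 1$ specifically). Your additional remarks on the nonnegativity of $\sigma^\tau$ and on the existence and boundedness of the orbifold heat kernel fill in details the paper leaves implicit.
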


\begin{proof}
The inequality follows immediately from the previous proposition together with a type of Green's formula involving the heat kernel (see \cite{eells}). For the second part about convergence, suppose that $H_t$ is Cauchy in $L^2$. Let $\epsilon > 0$ be given. Because $H_t$ is Cauchy in $L^2$ and because the function $\sigma$ compares uniformly with the norm afforded by $K$, there is a time $T > 0$ such that 
\[
\int_{\mathcal{X}} \sigma^\tau(t)\frac{\omega^n}{n!} < c(1)^{-1} \epsilon
\]
for each $t > T$ and each $\tau > 0$. We therefore find that 
\[
\sup_{\mathcal{X}} \sigma^\tau(t+1) \leqslant c(1)\int_{\mathcal{X}} \sigma^\tau(t) \frac{\omega^n}{n!} < \epsilon
\]
for each $t > T$ and each $\tau > 0$. This means precisely that 
\[
\sup_{\mathcal{X}} \sigma(H_{t+1}, H_{t + 1 + \tau}) < \epsilon
\] 
for each $t > T$ and each $\tau > 0$. We conclude that the $H_t$ are uniformly Cauchy with respect to the $C^0$-norm determined by $K$. 
\end{proof}

Also in \cite{donaldson}, Donaldson considered a corresponding variational approach and introduced a functional whose critical points correspond to stable points of the heat flow. Such a functional can be defined using the notion of secondary characteristic classes, which we review now. 

A $p$-multilinear function $\varphi$ on $\mathfrak{gl}(r, \mathbb{C})$ is called invariant if it is invariant under the (diagonal) adjoint action of $GL(r, \mathbb{C})$ on $p$ copies of $\mathfrak{gl}(r, \mathbb{C})$. Such a function assigns to any metric $H$ on $\mathcal{E}$ a $(p,p)$-form
\[
\varphi(F_H) := \varphi(F_H, \ldots, F_H) \in A^{p,p}(\mathcal{X}).
\]
The cohomology class represented by the $(p,p)$-form $\varphi(F_H)$ is independent of the choice of metric $H$ and is called the first characteristic class associated to $\varphi$. In addition the $\ddb$-lemma implies that the difference between two such forms is $\ddb$-exact. In fact, the following more precise statement is true (see \cite{donaldson}). 

\begin{proposition}
If $H,K$ are two metrics on $\mathcal{E}$, then for each $\varphi$ there is an invariant 
\[
R_\varphi(H,K) \in A^{p-1, p-1}(\mathcal{X})/(\textnormal{Im} \partial + \textnormal{Im} \bar{\partial}),
\]
called the secondary characteristic class associated to $\varphi$,
satisfying the following three properties. 
\begin{enumerate}
\item[(i)] We have $R_\varphi(K,K) = 0$ and for any third metric $J$, we have 
\[
R_\varphi(H,K) = R_\varphi(H,J) + R_\varphi(J,K).
\]
\item[(ii)] If $H_t$ is a smooth family of metrics, then 
\[
\frac{d}{dt}R_\varphi(H_t, K) = -i \varphi(F_{H_t};H^{-1}_t \dot{H}),
\]
where $\varphi(F_{H_t};H^{-1}_t \dot{H})$ denotes the sum 
\[
\varphi(F_{H_t};H^{-1}_t \dot{H}) = \sum_{k=1}^{p} \varphi(F_{H_t}, \ldots, \overbrace{H^{-1}_t \dot{H}}^{k}, \ldots, F_{H_t}).
\]
\item[(iii)] We have
\[
i \bar{\partial}\partial R_\varphi(H,K) = \varphi(F_H) - \varphi(F_K) \in A^{p,p}(X).
\]
\end{enumerate}
\end{proposition}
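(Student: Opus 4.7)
The strategy is to construct $R_\varphi(H,K)$ as a transgression integral along a path of metrics and then verify the three listed properties. Given metrics $H,K$ on $\mathcal{E}$, join them by a smooth path $\{H_t\}_{0 \le t \le 1}$ of Hermitian metrics with $H_0 = K$, $H_1 = H$ (for instance $H_t = Ke^{ts}$ where $H = Ke^s$), and set
\[
R_\varphi(H,K) := -i\int_0^1 \varphi\bigl(F_{H_t}; H_t^{-1}\dot{H}_t\bigr)\,dt,
\]
viewed as a class in $A^{p-1,p-1}(\mathcal{X})/(\textnormal{Im}\,\partial + \textnormal{Im}\,\bar{\partial})$. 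With this definition, property (ii) is immediate by differentiating under the integral sign after extending the path past $t=1$, and property (i) follows from the usual additivity of line integrals under concatenation, with $R_\varphi(K,K) = 0$ coming from the constant path.

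The crux of the proof is showing that the class of this integral modulo $\textnormal{Im}\,\partial + \textnormal{Im}\,\bar{\partial}$ is independent of the chosen path from $K$ to $H$. To handle this I would interpolate any two such paths by a smooth two-parameter family $\{H_{s,t}\}_{(s,t)\in[0,1]^2}$ with fixed endpoints, write $K_{s,t} = H_{s,t}^{-1}\partial_t H_{s,t}$ and $L_{s,t} = H_{s,t}^{-1}\partial_s H_{s,t}$, and compute the difference of the two line integrals by Stokes' theorem on the square. The key mixed-partial computation rests on the variation formula $\partial_t F_{H_{s,t}} = \bar{\partial}\partial_{H_{s,t}} K_{s,t}$ (and its analogue in $s$), combined with the Bianchi identity $d_{H_{s,t}}F_{H_{s,t}} = 0$ and the invariance of $\varphi$ under conjugation; together these exhibit the discrepancy between the two path integrals as a sum of a $\partial$-exact and a $\bar{\partial}$-exact form on $\mathcal{X}$. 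The fact that each $F_{H_{s,t}}$ is of pure type $(1,1)$, which forces Bianchi to split as $\partial_{H_{s,t}}F = 0$ and $\bar{\partial}F = 0$ separately, is precisely what lets the boundary terms land in $\textnormal{Im}\,\partial + \textnormal{Im}\,\bar{\partial}$ rather than merely in $\textnormal{Im}\,d$.

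For property (iii), the same ingredients yield, for any smooth path $H_t$, the transgression identity
\[
\frac{d}{dt}\varphi(F_{H_t}) = \bar{\partial}\partial\,\varphi\bigl(F_{H_t}; H_t^{-1}\dot{H}_t\bigr),
\]
which one establishes by first writing $\dot{F}_{H_t} = \bar{\partial}\partial_{H_t}(H_t^{-1}\dot{H}_t)$, then pulling $\bar{\partial}$ outside $\varphi$ using $\bar{\partial}F_{H_t} = 0$, and finally replacing $\partial_{H_t}$ by $\partial$ via $\partial_{H_t}F_{H_t} = 0$ together with the invariance of $\varphi$. Integrating over $[0,1]$ and multiplying by $i$ gives $i\bar{\partial}\partial R_\varphi(H,K) = \varphi(F_H) - \varphi(F_K)$, as required. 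I expect Step 2 to be the main obstacle: every other verification is either a reparametrization argument or a one-variable integration, whereas path-independence modulo $\textnormal{Im}\,\partial + \textnormal{Im}\,\bar{\partial}$ demands the more delicate Stokes-and-Bianchi computation on the square with careful bookkeeping of Hodge types.
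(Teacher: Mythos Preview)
Your proposal is correct and is precisely the standard Bott--Chern/Donaldson transgression construction; the paper does not actually supply its own proof of this proposition but instead cites \cite{donaldson}, and the Example immediately following the statement confirms that the paper has exactly your path-integral definition $R_\varphi(H,K) = -i\int_0^1 \varphi(F_{H_t};H_t^{-1}\dot H_t)\,dt$ in mind. Your identification of path-independence modulo $\textnormal{Im}\,\partial + \textnormal{Im}\,\bar\partial$ as the substantive step, handled via a two-parameter homotopy together with the split Bianchi identities $\partial_{H}F_H = 0 = \bar\partial F_H$ and the invariance of $\varphi$, is exactly the argument Donaldson gives.
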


\begin{example}
For our purposes, we only need two such invariants $R_\varphi$. The first $R_1$ is associated to the trace $\varphi_1(A) = \text{Tr}A$, and the second $R_2$ is associated to the Killing form $\varphi_2(A,B) = -\text{Tr}(AB)$.

This means in particular that given a path $H_t$ of metrics with $H_0 = K$ and $H_1 = H$, we may set 
\begin{align*}
R_1(H,K) &= -i \int_0^1 \text{Tr}(H_t^{-1}\dot{H}) \; dt \\
R_2(H,K) &= 2i \int_0^1 \text{Tr}(H_t^{-1}\dot{H} F_t) \; dt.
\end{align*}
The particular integrals may depend on the choice of path, but, modulo $\text{Im} \partial + \text{Im} \bar{\partial}$, they do not. 
\end{example}

\begin{definition}
With these two invariants, Donaldson introduced a functional for surfaces, whose extension to arbitrary dimensions can be described as 
\begin{align}  
M(H,K) = \int_{\mathcal{X}} (R_2 + 2 \lambda R_1  \omega) \wedge \frac{\omega^{n-1}}{(n-1)!}.
\end{align}
For a fixed metric $K$, we can consider the functional $M(-,K)$ on the space of metrics, and it turns out that the critical points of this functional (if they exist) are Hermitian-Einstein metrics. 
\end{definition}

\begin{proposition}\label{prop:M_K}
For a fixed metric $K$, let $M_K$ denote the functional on the space of metrics described by $M_K(H) = M(H,K)$. 
\begin{enumerate}
\item[(i)] If $H_t$ is any smooth path of metrics, then the variation of $M_K(H_t)$ along $H_t$ is given by  
\[
\frac{\partial}{\partial t}M_K(H_t) = 2i \int_{\mathcal{X}} \textnormal {Tr}(H_t^{-1} \dot{H}_t(F_t - \lambda \omega I_{\mathcal{E}})) \wedge \frac{\omega^{n-1}}{(n-1)!}.
\]
\item[(ii)] If $H$ is a critical point of $M_K$, then $H$ is a Hermitian-Einstein metric. 
\item[(iii)] In particular, if $H_t$ is a solution to the heat flow \eqref{eqn:heat}, then 
\[
\frac{\partial}{\partial t}M_K(H_t) = - \norm{\Lambda F_t - \lambda I_\mathcal{E}}^2_{L^2_{H_t}},
\] 
meaning that $M_K$ is a non-increasing function of $t$ along the heat flow. 
\end{enumerate}
\end{proposition}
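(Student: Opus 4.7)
My plan is to prove (i) directly from the definition of $M(H,K)$, and then derive (ii) and (iii) as formal consequences of (i). For (i), I would differentiate under the integral sign, applying property (ii) of the preceding proposition to each of the secondary classes $R_1$ and $R_2$. For $\varphi_1(A) = \text{Tr}(A)$ one obtains $\tfrac{d}{dt}R_1 = -i\,\text{Tr}(H_t^{-1}\dot{H}_t)$. For $\varphi_2(A,B) = -\text{Tr}(AB)$ the polarized sum unfolds as $-\text{Tr}((H_t^{-1}\dot{H}_t)F_t) - \text{Tr}(F_t(H_t^{-1}\dot{H}_t)) = -2\,\text{Tr}((H_t^{-1}\dot{H}_t)F_t)$ by trace cyclicity, so $\tfrac{d}{dt}R_2 = 2i\,\text{Tr}((H_t^{-1}\dot{H}_t)F_t)$. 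Assembling these two contributions with the coefficients from the definition of $M$ yields the claimed variational formula.

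For (ii), at a critical point $H$ of $M_K$, the right-hand side of the formula in (i) must vanish for every admissible variation, i.e., for every self-adjoint endomorphism $s := H^{-1}\dot{H}|_{t=0}$. Testing against $s = i(\Lambda F_H - \lambda I_\mathcal{E})$ --- which is genuinely Hermitian because $\Lambda F_H$ is skew-Hermitian with respect to any Chern metric and because $\lambda$ is purely imaginary by Remark \ref{rem:HE} --- forces the integral of a pointwise nonnegative quantity to vanish, and hence $\Lambda F_H - \lambda I_\mathcal{E} = 0$ pointwise; the slope normalization of $\lambda$ in Remark \ref{rem:HE} ensures that the scalar constant is exactly the Hermitian--Einstein one.

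For (iii), substitute the heat-flow relation $H_t^{-1}\dot{H}_t = -\tfrac{i}{2}(\Lambda F_t - \lambda I_\mathcal{E})$ into the variational formula from (i). The factor $2i \cdot (-i/2) = 1$ collapses in front, leaving an integrand of $\text{Tr}((\Lambda F_t - \lambda I_\mathcal{E})(F_t - \lambda \omega I_\mathcal{E})) \wedge \omega^{n-1}/(n-1)!$. Expanding the product, converting each wedge with $\omega^{n-1}/(n-1)!$ into a $\Lambda$-contraction against $\omega^n/n!$, and invoking Remark \ref{rem:HE} (via $\int \text{Tr}(\Lambda F_t)\,\omega^n/n! = -2\pi i \deg(\mathcal{E})$ from Chern--Weil) to see that the cross term linear in $\Lambda F_t - \lambda I_\mathcal{E}$ integrates to zero, reduces the expression to $\int \text{Tr}((\Lambda F_t - \lambda I_\mathcal{E})^2)\,\omega^n/n!$. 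Since $A := \Lambda F_t - \lambda I_\mathcal{E}$ is skew-Hermitian with respect to $H_t$, one has $\text{Tr}(A^2) = -|A|^2_{H_t}$, producing the claimed negative $L^2_{H_t}$-norm squared. The main obstacle throughout is the bookkeeping --- tracking factors of $n$ arising from $\omega \wedge \omega^{n-1}/(n-1)! = n\,\omega^n/n!$ and the sign from skew-Hermiticity of $A$ --- but these absorb cleanly once the identities of Remark \ref{rem:HE} are lined up.
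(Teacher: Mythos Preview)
Your argument is correct and follows the same architecture as the paper: compute (i) by differentiating $R_1$ and $R_2$ via property (ii) of the preceding proposition, then read off (ii) and (iii) formally. Your treatment of (ii) spells out the test variation explicitly where the paper just says ``immediate,'' and your treatment of (iii) is actually more careful than the paper's one-line version: you track the potential discrepancy coming from $\omega \wedge \tfrac{\omega^{n-1}}{(n-1)!} = n\,\tfrac{\omega^n}{n!}$ and kill the resulting cross term $(1-n)\lambda\,\mathrm{Tr}(\Lambda F_t - \lambda I_{\mathcal{E}})$ using the slope normalization of $\lambda$ from Remark~\ref{rem:HE}, whereas the paper simply asserts the result after substitution and skew-adjointness. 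That extra bookkeeping is not a different method, just a more honest accounting of the same computation.
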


\begin{proof}
For part (i), we compute using the definitions of the invariants $R_1$ and $R_2$ as follows 
\begin{align*}
\frac{\partial}{\partial t}M_K(H_t) &= \int_{\mathcal{X}} \left( 2i \text{Tr}(H_t^{-1} \dot{H} F_t)  - 2i\lambda  \text{Tr}(H_t^{-1} \dot{H}) \omega\right) \wedge \frac{\omega^{n-1}}{(n-1)!} \\
&= 2i \int_{\mathcal{X}} \text{Tr}(H_t^{-1} \dot{H}(F_t - \lambda\omega I_{\mathcal{E}})) \wedge \frac{\omega^{n-1}}{(n-1)!}
\end{align*}
Part (ii) is then immediate from the computation in part (i). Part (iii) then follows from using the flow \eqref{eqn:heat} and the fact that $\Lambda F_t - \lambda I_{\mathcal{E}}$ is skew-adjoint with respect to $H_t$. 
\end{proof}

Part (iii) of the previous proposition says that $M_K$ is non-increasing along the heat flow, and we will later show that the functional $M_K$ is convex in a certain sense (see Proposition \ref{prop:M_Kconvex}). In general, however, $M_K$ may not be bounded from below. Because the critical points of $M_K$ are the desired metrics, it would be useful to understand exactly when $M_K$ admits such critical points. Motivated by Proposition 5.3 of \cite{simpson}, we introduce the following notion of properness for $M_K$. 

\begin{definition}\label{def:proper}
We say that $M_K$ is proper if there are positive constants $C_1, C_2$ such that for the solution $H_t = Ke^{s_t}$ to the heat flow with initial condition $s_0 = 0$, we have 
\[
\sup_{\mathcal{X}}|s_t|_K \leqslant C_1 + C_2 M_K(Ke^{s_t})
\]
for each $t \geqslant 0$. 
\end{definition}

\begin{corollary}\label{cor:proper}
If $M_K$ is proper and $H_t$ is a solution to the heat flow with initial condition $H_0 = K$, then the following statements are true. 
\begin{enumerate}
\item[(i)] $M_K(H_t)$ is bounded from below (by $-C_1/C_2$). 
\item[(ii)] $\norm{H_t}_{C^0_K}$ is bounded from above. 
\item[(iii)] $\Lambda F_{H_t} \to \lambda I_{\mathcal{E}}$ in $L^2_K$ as $t \to \infty$. 
\end{enumerate}
\end{corollary}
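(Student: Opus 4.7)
The plan is to handle (i) and (ii) by direct manipulation of the properness inequality together with the monotonicity of $M_K$ along the heat flow (Proposition \ref{prop:M_K}(iii)), and then to derive (iii) by integrating that same derivative formula in time and promoting integrability to convergence using Proposition \ref{prop:decreasing}.

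For (i), properness directly gives $0 \leq \sup_{\mathcal{X}}|s_t|_K \leq C_1 + C_2 M_K(H_t)$, whence $M_K(H_t) \geq -C_1/C_2$ uniformly in $t$. For (ii), observe first that $M_K(K) = 0$, for example by taking the constant path in the definitions of $R_1$ and $R_2$. Proposition \ref{prop:M_K}(iii) then gives $M_K(H_t) \leq M_K(H_0) = 0$ along the flow, so properness yields $\sup_{\mathcal{X}} |s_t|_K \leq C_1$; since $H_t = K e^{s_t}$, this implies uniform $C^0_K$ bounds on both $H_t$ and $H_t^{-1}$.

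For (iii), combining Proposition \ref{prop:M_K}(iii) with (i) gives
\[
\int_0^\infty \norm{\Lambda F_\tau - \lambda I_{\mathcal{E}}}_{L^2_{H_\tau}}^2 \, d\tau = M_K(H_0) - \lim_{\tau \to \infty} M_K(H_\tau) < \infty.
\]
I would then show that the integrand is itself non-increasing in $\tau$. The pointwise parabolic inequality
\[
\left( \frac{\partial}{\partial t} + \Delta \right) |\Lambda F_t - \lambda I_{\mathcal{E}}|_{H_t}^2 \leq 0,
\]
which underlies Proposition \ref{prop:decreasing}, integrates over $\mathcal{X}$ to give $\partial_t \norm{\Lambda F_t - \lambda I_{\mathcal{E}}}_{L^2_{H_t}}^2 \leq 0$, the Laplacian term vanishing upon integration chart-by-chart. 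A non-negative, non-increasing, integrable function on $[0,\infty)$ must tend to zero, so $\norm{\Lambda F_t - \lambda I_{\mathcal{E}}}_{L^2_{H_t}}^2 \to 0$. Finally, part (ii) makes the norms $L^2_{H_t}$ and $L^2_K$ uniformly equivalent on endomorphisms, giving $\Lambda F_{H_t} \to \lambda I_{\mathcal{E}}$ in $L^2_K$ as claimed.

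The main obstacle is this $L^2$-monotonicity step. Proposition \ref{prop:decreasing} as stated records only the sup version, so one must descend to the pointwise parabolic inequality underlying its proof, obtained by computing $\partial_t F_t$ along the flow and applying the K\"ahler identities as in Lemma \ref{lem:curvaturerelations}. If one wishes to avoid this, an alternative would be to combine the integrability just obtained with the uniform higher Sobolev bounds on $H_t$ afforded by Proposition \ref{prop:l2p} to deduce a uniform spatial Lipschitz estimate on $\Lambda F_t$, and then argue that a positive persistent pointwise lower bound on $|\Lambda F_t - \lambda I_{\mathcal{E}}|_K$ would force a positive $L^2$ lower bound on a ball of definite radius, contradicting integrability.
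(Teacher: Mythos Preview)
Your proof is correct and follows the paper's approach for (i) and (ii) essentially verbatim. For (iii), the paper's argument is actually terser than yours: it simply asserts that since $M_K(H_t)$ is bounded below and non-increasing, its time derivative tends to zero, and then invokes Proposition~\ref{prop:M_K}(iii) and the $C^0$ bound exactly as you do. Your integrability-plus-monotonicity argument for the integrand is a rigorous justification of precisely this assertion (which, as you implicitly recognize, is not automatic for arbitrary bounded monotone functions), so rather than being an obstacle, the pointwise parabolic inequality you invoke is what makes the paper's one-line claim honest.
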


\begin{proof}
Part (i) is obvious. Part (ii) follows from the fact that $t \mapsto M_K(H_t)$ is decreasing along the heat flow and $M_K(H_0) = M_K(K) = 0.$ For part (iii), because $M_K(H_t)$ is bounded from below and non-increasing, we know that 
\[
\lim_{t \to \infty} \frac{\partial}{\partial t} M_K (H_t) = 0 
\] 
and by the previous proposition we conclude that 
\[
\lim_{t \to \infty} \norm{\Lambda F_t - \lambda I_{\mathcal{E}}}_{L^2_{H_t}}^2 = 0.
\]
Because $H_t$ is uniformly bounded in $C^0$, the $L^2$-norm with respect to $H_t$ is equivalent to the $L^2$-norm with respect to $K$ in a uniform manner, meaning that there is a constant $C$ independent of $t$ such that 
\[
\norm{\Lambda F_t - \lambda I_{\mathcal{E}}}_{L^2_{K}}^2 \leqslant C \norm{\Lambda F_t - \lambda I_{\mathcal{E}}}_{L^2_{H_t}}^2.
\]
Taking the limit of both sides gives the required convergence.  
\end{proof}

\section{Proof of the main result}

Let us recall that our objective is to prove the following. 

\begin{theorem}\label{thm:main}
Assume $\mathcal{E}$ is indecomposable. Then the following are equivalent. 
\begin{enumerate}
\item[(i)] The bundle $\mathcal{E}$ is stable. 
\item[(ii)] For each metric $K$ on $\mathcal{E}$, the Donaldson functional $M_K$ is proper in the sense of Definition \ref{def:proper}. 
\item[(iii)] There is a Hermitian-Einstein metric on $\mathcal{E}$. 
\end{enumerate}
\end{theorem}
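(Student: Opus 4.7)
The plan is to prove the three implications $(\mathrm{iii}) \Rightarrow (\mathrm{i}) \Rightarrow (\mathrm{ii}) \Rightarrow (\mathrm{iii})$, with the second implication being the heart of the matter.

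For $(\mathrm{iii}) \Rightarrow (\mathrm{i})$, I would adapt the standard Kobayashi-type argument. Given a Hermitian-Einstein metric $H$ and a coherent subsheaf $\mathcal{F} \subset \mathcal{E}$, one uses Lemma \ref{lem:subsheaf} to find a codimension-two subvariety $\mathcal{V}$ outside of which $\mathcal{F}$ is a subbundle, writes the orthogonal projection $\pi \in A^0(\mathrm{End}(\mathcal{E}))$ as a weakly holomorphic projector, and derives the Chern-Weil integral formula $\deg(\mathcal{F}) = \int_{\mathcal{X} \setminus \mathcal{V}} \mathrm{Tr}(\pi \cdot i \Lambda F_H) \, \omega^n/n! - \int |\bar\partial \pi|^2 \omega^n/n!$. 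Substituting the Hermitian-Einstein equation yields $\mu(\mathcal{F}) \leqslant \mu(\mathcal{E})$, with equality forcing $\bar\partial \pi = 0$ and hence a holomorphic splitting, contradicting indecomposability. The orbifold features are transparent here since everything is checked chart-by-chart.

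For $(\mathrm{ii}) \Rightarrow (\mathrm{iii})$, I would run the heat flow from $H_0 = K$. Corollary \ref{cor:proper} combined with properness gives a uniform $C^0$ bound on $H_t$, which via Corollary \ref{cor:l12} upgrades to a uniform $L^2_1$ bound and, together with the $C^0$ bound on $\Lambda F_t$ from Proposition \ref{prop:decreasing}, via Proposition \ref{prop:l2p} to a uniform $L^p_2$ bound for all $p < \infty$. By weak-$L^p_2$ compactness and Rellich, a subsequence converges in $L^2$, and then Corollary \ref{cor:l2toc0} promotes this to $C^0$-convergence of the full flow. The $L^2$ convergence $\Lambda F_t \to \lambda I_{\mathcal{E}}$ from Corollary \ref{cor:proper}(iii) then identifies the $C^0$-limit as a Hermitian-Einstein metric; elliptic bootstrapping in orbifold charts provides the smoothness.

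The hard step is $(\mathrm{i}) \Rightarrow (\mathrm{ii})$, where I follow Simpson's strategy. Suppose $M_K$ is not proper; then along the heat flow one extracts a sequence of self-adjoint endomorphisms $s_i$ with $\ell_i := \sup_{\mathcal{X}} |s_i|_K \to \infty$ but $M_K(Ke^{s_i}) \leqslant \ell_i/i$. Normalize $u_i = s_i/\ell_i$, so $\sup|u_i|_K = 1$. Using the convexity of $M_K$ (Proposition \ref{prop:M_Kconvex}) and the variation formula of Proposition \ref{prop:M_K}(i), one obtains a uniform $L^2_1$ bound on the $u_i$, so a subsequence converges weakly in $L^2_1$ to a nonzero limit $u_\infty$. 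The key structural claim, proved via a functional-calculus argument of Simpson (applying a smooth function $\Phi$ chosen so that $\Phi(u_\infty)$ behaves like a spectral projector), is that the eigenvalues of $u_\infty$ are constant and that the spectral projection onto the sum of eigenspaces below any non-top eigenvalue $\nu$ defines a weakly holomorphic projector $\pi_\nu \in L^2_1$, namely $\pi_\nu^2 = \pi_\nu = \pi_\nu^*$ and $(I - \pi_\nu) \bar\partial \pi_\nu = 0$. Here is where Lemma \ref{lem:weak}, the orbifold regularity result for weakly holomorphic subbundles, enters: it produces from each $\pi_\nu$ a coherent torsion-free subsheaf $\mathcal{F}_\nu \subset \mathcal{E}$ with $\pi_\nu$ as its almost-everywhere orthogonal projector. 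A Chern-Weil computation then shows that the failure of properness forces $\sum_\nu (\nu_+ - \nu)(\mathrm{deg}(\mathcal{F}_\nu) - \mu(\mathcal{E})\,\mathrm{rank}(\mathcal{F}_\nu)) \geqslant 0$, from which at least one $\mathcal{F}_\nu$ must destabilize $\mathcal{E}$, contradicting stability. The main obstacle is keeping Simpson's spectral construction and Chern-Weil bookkeeping consistent in the orbifold setting, which ultimately rests on Lemma \ref{lem:weak}; all the analysis itself remains local on charts and so carries over unchanged.
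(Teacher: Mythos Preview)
Your proposal is correct and follows essentially the same route as the paper: the implications $(\mathrm{iii})\Rightarrow(\mathrm{i})$ and $(\mathrm{ii})\Rightarrow(\mathrm{iii})$ match Propositions~\ref{prop:iii} and~\ref{prop:HE}, and for $(\mathrm{i})\Rightarrow(\mathrm{ii})$ you reproduce Simpson's blow-up argument via Lemmas~\ref{lem:proper} and~\ref{lem:weak}. The only cosmetic difference is that you normalize by $\ell_i=\sup_{\mathcal X}|s_i|_K$ whereas the paper normalizes by $\ell_k=\bigl\||s_k|^2\bigr\|_{L^2}^{1/2}=\|s_k\|_{L^4}$; since Lemma~\ref{lem:C^0control} makes these comparable, the two choices lead to the same weak $L^2_1$-limit $u_\infty$ and the same spectral-projector and Chern--Weil bookkeeping.
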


Let us immediately deal with the implication (iii) $\implies$ (i). A proof can be found, for example, in \cite{lubke}, but we outline a proof now for the sake of completeness.

\begin{proposition}\label{prop:iii}
Assume $\mathcal{E}$ is indecomposable. If there is a Hermitian-Einstein metric $H$ on $\mathcal{E}$, then $\mathcal{E}$ is stable.  
\end{proposition}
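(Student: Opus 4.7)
The plan is to derive the slope inequality via a Chern--Weil-type computation with the orthogonal projection onto a coherent subsheaf, working off a codimension-$2$ subvariety where the subsheaf becomes a genuine holomorphic sub-bundle. Let $\mathcal{F}$ be any proper coherent subsheaf of $\mathcal{E}$ with $0 < \text{rank}(\mathcal{F}) < \text{rank}(\mathcal{E})$; the goal is to show $\mu(\mathcal{F}) < \mu(\mathcal{E})$. Applying Lemma \ref{lem:subsheaf} to both $\mathcal{F}$ and the quotient $\mathcal{E}/\mathcal{F}$ yields an analytic subvariety $\mathcal{V} \subset \mathcal{X}$ of codimension at least two such that $\mathcal{F}$ is a holomorphic sub-bundle of $\mathcal{E}$ over $\mathcal{X} \setminus \mathcal{V}$. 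Let $\pi \in A^0(\text{End}(\mathcal{E}))$ denote the $H$-orthogonal projection onto $\mathcal{F}$ defined on $\mathcal{X} \setminus \mathcal{V}$; it is $H$-self-adjoint with $\pi^2 = \pi$, and the holomorphicity of $\mathcal{F}$ translates into $(I_\mathcal{E} - \pi)\bar{\partial}\pi = 0$.

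Next I would apply the standard sub-bundle curvature identity to express the trace of the induced curvature on $\mathcal{F}$ in terms of $\pi$, $F_H$, and the second fundamental form, and then integrate against $\omega^{n-1}/(n-1)!$ after applying $\frac{i}{2\pi}\Lambda$. The result is an identity of the form
\[
\deg(\mathcal{F}) = \frac{i}{2\pi} \int_{\mathcal{X}} \text{Tr}(\pi \cdot \Lambda F_H) \frac{\omega^n}{n!} - \frac{1}{2\pi} \int_{\mathcal{X}} |\bar{\partial}\pi|_H^2 \frac{\omega^n}{n!},
\]
where the integrals are a priori taken over $\mathcal{X} \setminus \mathcal{V}$ but extend to all of $\mathcal{X}$ since $\mathcal{V}$ has real codimension at least four and $\pi$ is bounded. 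Inserting the Hermitian--Einstein condition $\Lambda F_H = \lambda I_\mathcal{E}$, using $\text{Tr}(\pi) = \text{rank}(\mathcal{F})$ pointwise, and the value of $\lambda$ from Remark \ref{rem:HE}, the first summand becomes $\text{rank}(\mathcal{F}) \cdot \mu(\mathcal{E})$. Dividing by $\text{rank}(\mathcal{F})$ yields
\[
\mu(\mathcal{F}) = \mu(\mathcal{E}) - \frac{1}{2\pi\,\text{rank}(\mathcal{F})} \int_{\mathcal{X}} |\bar{\partial}\pi|_H^2 \frac{\omega^n}{n!} \leqslant \mu(\mathcal{E}).
\]

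Equality holds precisely when $\bar{\partial}\pi \equiv 0$ on $\mathcal{X} \setminus \mathcal{V}$, in which case $\pi$ is holomorphic there and, being bounded across $\mathcal{V}$ (of codimension at least two), extends by Riemann's removable singularity theorem to a global holomorphic idempotent endomorphism of $\mathcal{E}$. Such a $\pi$ furnishes a non-trivial holomorphic decomposition $\mathcal{E} = \pi(\mathcal{E}) \oplus (I_\mathcal{E} - \pi)(\mathcal{E})$, contradicting the assumption that $\mathcal{E}$ is indecomposable. Therefore the inequality is strict and $\mathcal{E}$ is stable. The chief obstacle in transferring this argument to the orbifold setting is the justification of the Chern--Weil integration across $\mathcal{V}$ and the extension of $\pi$ to a bounded $L_1^2$ section of $\text{End}(\mathcal{E})$ satisfying the required integration-by-parts identity in a weak sense. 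This is handled chart-by-chart: on each uniformizing cover $U_\alpha$ the pull-back of $\mathcal{V}$ is a $G_\alpha$-invariant analytic subvariety of the same codimension, and the classical codimension-$2$ cut-off arguments (as in \cite{kobayashi}) apply $G_\alpha$-equivariantly without alteration, so the orbifold case introduces no genuinely new analytic difficulty.
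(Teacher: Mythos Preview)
Your proof is correct and follows essentially the same route as the paper: both restrict to the complement of a codimension-two locus where the subsheaf is a genuine sub-bundle, run the Gauss--Codazzi/Chern--Weil computation to get the slope inequality, and rule out equality by observing that it would force a holomorphic splitting contrary to indecomposability. The only cosmetic difference is that you package the computation via the orthogonal projection $\pi$ and the degree formula \eqref{eqn:degree}, whereas the paper uses the semi-positivity of $F|_{\mathcal{E}'} - F'$ directly---these are equivalent, as the paper itself verifies in the Remark following Definition~\ref{def:weak}.
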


\begin{proof}
Let $\mathcal{E}'$ be a proper coherent subsheaf of $\mathcal{E}$ of rank $r'$ with torsion-free quotient $\mathcal{E}/\mathcal{E}'$. Lemma \ref{lem:subsheaf} implies that $\mathcal{E}'$ is locally free outside of a subvariety of codimension at least $2$. From this point forward, we work away from this subvariety so that for example the metric $H$ restricts to a metric on $\mathcal{E}'$ with corresponding curvature denoted $F'$. It is standard (see \cite[Chapter 1, Section 5]{gh}) to show that the difference of curvatures 
\[
F|_{\mathcal{E}'} - F' 
\]
is a semi-positive $\text{End}(\mathcal{E}')$-valued $(1,1)$-form and moreover vanishes if and only if the orthogonal complement of $\mathcal{E}'$ is holomorphic. Here we are using the convention as in \cite{gh} that semi-positive implies in particular that 
\[
\frac{i}{2\pi} \cdot \text{Tr}_{\mathcal{E}'}(F)  - \frac{i}{2\pi} \text{Tr}_{\mathcal{E}'}F'
\]
is a positive $(1,1)$-form.   Integrating over $\mathcal{X}$ we obtain the inequality 
\begin{align}\label{eqn:int1}
 \frac{i}{2\pi} \int_{\mathcal{X}} \text{Tr}_{\mathcal{E}'}\Lambda F \cdot \text{vol} \geqslant \deg(\mathcal{E}') 
\end{align}
which is valid because we are working outside of a subset of codimension at least two. 
Now the Hermitian-Einstein condition guarantees that 
\[
\text{Tr}_{\mathcal{E}}(\Lambda F) = \text{Tr}_{\mathcal{E}} (\lambda I_{\mathcal{E}}) = r \cdot \lambda
\]
and hence also that 
\[
\text{Tr}_{\mathcal{E}'} \Lambda F = r' \cdot \lambda = \frac{r'}{r} \text{Tr}_{\mathcal{E}}\Lambda F. 
\]
Using these we find that \eqref{eqn:int1} is equivalent to  
\[
\frac{r'}{r} \deg(\mathcal{E}) \geqslant \deg(\mathcal{E}').
\]
But the inequality is actually strict because equality would mean that the complement of $\mathcal{E}'$ is holomorphic, which is a contradiction to the assumption that $\mathcal{E}$ is indecomposable. We conclude that $\mathcal{E}$ is stable. 
\end{proof}

A proof of the implication (ii) $\implies$ (iii) for manifolds can be found in \cite{simpson}, and for this, one uses the heat flow of Definition \ref{defn:heatflow}. Indeed the argument roughly proceeds as follows. The assumption (ii) guarantees the functional $M_K$ has a unique critical point belonging to a certain Sobolev space. The heat flow approaches this critical point $H_\infty$ and certain estimates involving this flow and the functional $M_K$ allow one to obtain enough regularity on this critical point to ascertain that $H_\infty$ corresponds to a bona fide smooth metric. Since stable points of the heat flow are Hermitian-Einstein, we find that $H_\infty$ is. Let us now be more precise. 

\begin{proposition}\label{prop:HE}
Assume $\mathcal{E}$ is indecomposable. If for each fixed metric $K$ on $\mathcal{E}$, the Donaldson functional $M_K$ is proper in the sense of Definition \ref{def:proper}, then there is a Hermitian-Einstein metric on $\mathcal{E}$. 
\end{proposition}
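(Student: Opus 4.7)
The plan is to realize the Hermitian--Einstein metric as the large-time limit of the heat flow started from $K$. Let $H_t = Ke^{s_t}$ be the heat flow solution with $s_0 = 0$, which exists for all $t \geq 0$. Properness of $M_K$ feeds directly into Corollary \ref{cor:proper}, yielding simultaneously a uniform $C^0_K$-bound on $H_t$ (so the endomorphisms $e^{s_t}$ have eigenvalues bounded away from $0$ and $\infty$), the $L^2_K$-convergence $\Lambda F_{H_t} \to \lambda I_{\mathcal{E}}$, and a lower bound on $M_K(H_t)$. Combined with the pointwise monotone bound on $|\Lambda F_t|_K$ from Proposition \ref{prop:decreasing}, Proposition \ref{prop:l2p} applies to produce uniform $\|H_t\|_{L^p_2(K)}$ bounds for every finite $p$. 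Fixing some $p > 2n$, Sobolev embedding (chart-by-chart on the compact orbifold $\mathcal{X}$) gives uniform $C^{1,\alpha}$ control on $H_t$.

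A subsequence $H_{t_i}$ then converges weakly in $L^p_2$ and strongly in $C^{1,\alpha'}$ (for $\alpha' < \alpha$) to some limit $H_\infty$, positive definite by the uniform $C^0_K$ bounds. The critical step is to promote subsequential convergence to convergence of the entire family. For this I would exploit Corollary \ref{cor:l2toc0}: the identity $-\partial_t M_K(H_t) = \|\Lambda F_t - \lambda I_{\mathcal{E}}\|^2_{L^2_{H_t}}$ combined with the lower bound on $M_K$ makes $\|\dot H_t\|_{L^2_{H_t}}$ square-integrable in $t$, and a Cauchy-in-$L^2_K$ argument through the parabolic inequality for $\sigma(H_t, H_{t+\tau})$ forces Cauchy-in-$C^0_K$ via Corollary \ref{cor:l2toc0}.

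Since the $C^0_K$-limit and the weak $L^p_2$-limit then coincide, the metric $H_\infty$ is positive definite, continuous, and belongs to $L^p_2$ for all finite $p$. Passing to the $L^2_K$-limit in the decomposition $\Lambda F_{H_t} = \lambda I_{\mathcal{E}} + (\Lambda F_{H_t} - \lambda I_{\mathcal{E}})$ gives $\Lambda F_{H_\infty} = \lambda I_{\mathcal{E}}$ in the sense of distributions. Rewriting this via Lemma \ref{lem:curvaturerelations}(ii) as a second-order quasi-linear elliptic equation for $s_\infty$, standard elliptic bootstrapping applied $G_\alpha$-equivariantly on each orbifold chart promotes $H_\infty$ from $L^p_2$ to smoothness, producing the desired Hermitian--Einstein metric.

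The main obstacle will be the upgrade from weak subsequential $L^p_2$-convergence to strong $C^0_K$-convergence of the full flow, since without it one cannot pass to the limit in the nonlinear curvature expression. The Donaldson--Simpson device, namely the parabolic maximum principle for $\sigma(H_s, H_t)$ encoded in Corollary \ref{cor:l2toc0}, together with the finiteness of the heat kernel supremum on the compact K\"ahler orbifold $(\mathcal{X}, \omega)$, is precisely what furnishes this upgrade; by comparison, the elliptic regularity step at the end is routine.
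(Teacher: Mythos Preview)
Your overall strategy matches the paper's, but there is an ordering problem. Proposition~\ref{prop:l2p}, as stated, requires that $H_t$ \emph{converge} in $C^0$ to some continuous metric, not merely that it be uniformly $C^0$-bounded; when you invoke it in your first paragraph you have only the bound from Corollary~\ref{cor:proper}(ii). The paper repairs this by inserting an intermediate step: from the $C^0$-bound and the curvature bound (Proposition~\ref{prop:decreasing}) one first obtains a uniform $L^2_1$-bound via Corollary~\ref{cor:l12}, extracts by compactness a subsequence $H_{t_i}$ converging strongly in $L^2$, and then uses Corollary~\ref{cor:l2toc0} to upgrade this subsequential $L^2$-convergence to $C^0$-convergence. \emph{Only then} is Proposition~\ref{prop:l2p} invoked to produce the $L^p_2$-bounds along that subsequence. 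Your second paragraph already contains the needed ingredient (Corollary~\ref{cor:l2toc0}); it simply has to be moved ahead of the appeal to Proposition~\ref{prop:l2p}.

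On a related point: your effort to promote subsequential convergence to convergence of the entire flow is more than the argument requires. A single $C^0$-convergent subsequence $H_{t_i} \to H_\infty$ already yields a candidate metric in $L^p_2$ for every $p$, and Corollary~\ref{cor:proper}(iii) lets one pass $\Lambda F_{H_{t_i}} \to \lambda I_{\mathcal{E}}$ to the weak limit. Note also that square-integrability in $t$ of $\lVert \dot H_t\rVert_{L^2}$ does not by itself force the flow to be $L^2$-Cauchy, so that part of your sketch would need additional work if you insisted on full-flow convergence; fortunately it is unnecessary. The elliptic bootstrap at the end is exactly as you say.
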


\begin{proof}
Let us denote by $H_t$ a solution to the heat flow with initial condition $H_0 = K$. Corollary \ref{cor:proper} applies so in particular we have a uniform $C^0$-bound on $H_t$. (Here all norms will be computed with respect to the fixed initial metric $K$.) We have a uniform $C^0$-bound on $\Lambda F_t$ by Proposition \ref{prop:decreasing}. It follows from Corollary \ref{cor:l12} that we have a uniform $L_1^2$-bound on $H_t$. A compactness theorem now guarantees the existence of a sequence of times $t_i \to \infty$ and a limit $H_\infty \in L_1^2$ such that the sequence $H_{t_i}$ converges in $L^2$ to $H_\infty$. Corollary \ref{cor:l2toc0} implies actually that $H_{t_i}$ converges to $H_\infty$ in $C^0$-norm.  Proposition \ref{prop:l2p} now gives that $H_{t_i}$ is in fact uniformly bounded in $L_2^p$ for each $p < \infty$. It follows that the weak limit $F_\infty$ exists in $L^p$ for each $p < \infty$ and moreover that the weak equation $\Lambda F_\infty = \lambda \cdot I_{\mathcal{E}}$ holds by Corollary \ref{cor:proper} (iii). Elliptic regularity now implies that $H_\infty$ is in fact smooth.
\end{proof}

The remaining implication is (i) $\implies$ (ii), and this is proved for manifolds in \cite{simpson} with the help of a regularity statement concerning weakly holomorphic subbundles from \cite{uy}, another proof of which can be found in \cite{popovici}. The precise notion of a weakly holomorphic subbundle that we will use is the following. 

\begin{definition}\label{def:weak}
By a weakly holomorphic subbundle of $\mathcal{E}$ (with respect to a fixed metric $K$) we mean an $L_1^2$ section $\Pi$ of $\text{End}(\mathcal{E})$ which satisfies $\Pi = \Pi^* = \Pi^2$ (where the adjoint is computed with respect to $K$) and $(I_{\mathcal{E}} - \Pi) \bar{\partial} \Pi = 0$. 

A weakly holomorphic subbundle determines a degree via Chern-Weil theory, which may be computed as 
\begin{align}\label{eqn:degree}
\deg(\Pi) = \frac{i}{2\pi} \int_{\mathcal{X}} \text{Tr}(\Pi \Lambda F_K) \frac{\omega^n}{n!} - \frac{1}{2\pi} \int_{\mathcal{X}} |\bar{\partial} \Pi|_K^2 \frac{\omega^n}{n!}
\end{align}
(compare to \cite[Lemma 3.2]{simpson} or \cite[Proposition 4.2]{uy}). It therefore makes sense to say when a weakly holomorphic subbundle is destabilizing for $\mathcal{E}$. 
\end{definition}

\begin{remark}
Let us verify equation \eqref{eqn:degree} for the case of a smooth subbundle $\mathcal{S}$ of $\mathcal{E}$. Let $\Pi$ denote the projection endomorphism of $\mathcal{E}$ corresponding to $\mathcal{S}$. If we write $D_{\mathcal{E}}$ for the Chern connection on $\mathcal{E}$ determined by the metric $K$, then there is a connection $D_{\mathcal{S}}$ on $\mathcal{S}$ described by the composition $\Pi \circ D_{\mathcal{E}}$. The difference $A = D_{\mathcal{E}}|_{\mathcal{S}} - D_{\mathcal{S}}$ may be considered as a map from $A^0(\mathcal{S})$ to $A^1(\mathcal{S}^\perp)$. In fact, $A$ is a map to $A^{1,0}(\mathcal{S}^\perp)$ (see \cite{gh}) and corresponds to the composition $\Pi^\perp \circ \partial_{\mathcal{E}} \Pi$ (see \cite[Proposition 4.2]{uy}), where here $\partial_{\mathcal{E}}$ denotes the $(1,0)$-component of $D_{\mathcal{E}}$. In \cite{gh}, the curvature of the subbundle is related to the curvature of the ambient bundle by $F_{\mathcal{S}} = \Pi \circ F_{\mathcal{E}} - A \wedge A^*.$ Taking the trace and integrating over $\mathcal{X}$ we find that 
\begin{align*}
\deg(\mathcal{S}) &= \frac{i}{2\pi} \int_{\mathcal{X}} \text{Tr}(F_{\mathcal{S}}) \wedge \frac{\omega^{n-1}}{(n-1)!} \\
&=  \frac{i}{2\pi} \int_{\mathcal{X}} \text{Tr}(\Pi F_{\mathcal{E}}) \frac{\omega^{n-1}}{(n-1)!} - \frac{1}{2\pi}\int_{\mathcal{X}} \text{Tr}(i A \wedge A^*) \wedge \frac{\omega^{n-1}}{(n-1)!} \\
&= \frac{i}{2\pi} \int_{\mathcal{X}} \text{Tr}(\Pi \Lambda F_{\mathcal{E}}) \frac{\omega^n}{n!} - \frac{1}{2\pi}\int_{\mathcal{X}} |\Pi^\perp \circ \partial_{\mathcal{E}} \Pi|_K^2 \frac{\omega^{n}}{n!}.
\end{align*}
Because $\Pi  \circ \partial_{\mathcal{E}} \Pi = 0$, we conclude that in fact $\Pi^\perp \circ \partial_{\mathcal{E}}\Pi = \partial_{\mathcal{E}} \Pi$. The fact that $\Pi$ is self-adjoint also implies that $|\partial_{\mathcal{E}}\Pi| = |\bar{\partial} \Pi|$ and the formula \eqref{eqn:degree} is verified.  
\end{remark}

With these notions, the implication (i) $\implies$ (ii) then follows immediately from the following two lemmas. 

\begin{lemma}\label{lem:proper}
Suppose $M_K$ is not proper. Then there is a weakly holomorphic subbundle of $\mathcal{E}$ which is destabilizing for $\mathcal{E}$. 
\end{lemma}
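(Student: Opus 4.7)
My plan is to follow the Uhlenbeck--Yau / Simpson strategy: use the failure of properness to produce a non-zero self-adjoint endomorphism $u_\infty \in L_1^2(\text{End}(\mathcal{E}))$ whose spectral projections yield weakly holomorphic subbundles, at least one of which destabilizes $\mathcal{E}$.

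First, taking $C_1 = C_2 = i$ in the negation of Definition \ref{def:proper} produces times $t_i$ along the heat flow $H_t = Ke^{s_t}$ for which $l_i := \sup_{\mathcal{X}} |s_{t_i}|_K > i + i \cdot M_K(H_{t_i})$. Since $M_K$ is non-increasing from $M_K(K) = 0$ along the flow (Proposition \ref{prop:M_K}), one extracts $l_i \to \infty$ and $M_K(H_{t_i})/l_i \to 0$. The key analytic input is a functional identity of the form
\[
M_K(Ke^s) = \ell(s) + \int_{\mathcal{X}} \langle \Psi(s)(\bar\partial s), \bar\partial s \rangle_K \, \frac{\omega^n}{n!},
\]
derived directly from the definitions of the secondary classes $R_1$ and $R_2$, where $\ell(s)$ is linear in $s$ (hence bounded in terms of $\sup|s|_K$) and $\Psi(s)$ is the positive operator obtained by applying $\Psi(x,y) = (e^{y-x} - (y-x) - 1)/(y-x)^2$ (extended continuously to the diagonal by $1/2$) in the spectral calculus for $s$. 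Applying this to $s_{t_i} = l_i u_i$ and using the asymptotic inequality $l\,\Psi(lx,ly) \geq \Phi(x,y)$ for a nonnegative function $\Phi$ that is strictly positive on $\{y>x\}$ yields a uniform bound on $\int_{\mathcal{X}} \langle \Phi(u_i)(\bar\partial u_i), \bar\partial u_i\rangle_K\,\omega^n/n!$. Combined with the a priori $C^0$-bound $|u_i|_K \leq 1$, this yields a uniform $L_1^2$ bound on the normalizations $u_i := s_{t_i}/l_i$. Rellich compactness then produces a weak $L_1^2$ (and strong $L^2$) limit $u_\infty$ that is self-adjoint with $|u_\infty|_K \leq 1$ a.e.; subtracting the trace of $s_{t_i}$ at the outset (which perturbs $M_K$ only by a controlled additive term) ensures $u_\infty$ is non-zero.

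Next, Fatou's lemma applied to the uniform bound on the $\Phi$-weighted integrals yields $\int_{\mathcal{X}} \langle \Phi(u_\infty)(\bar\partial u_\infty), \bar\partial u_\infty\rangle_K\,\omega^n/n! < \infty$. A measure-theoretic argument (approximating the characteristic function $\chi_{(-\infty,c]}$ by smooth cutoffs $p_\epsilon$ and integrating via a co-area formula over $c \in (-1,1)$) then shows that for almost every such $c$ the spectral projection $\Pi_c := \chi_{(-\infty,c]}(u_\infty)$ is an $L_1^2$ section of $\text{End}(\mathcal{E})$ satisfying $\Pi_c = \Pi_c^* = \Pi_c^2$ and $(I_{\mathcal{E}} - \Pi_c)\bar\partial \Pi_c = 0$, hence a weakly holomorphic subbundle in the sense of Definition \ref{def:weak}. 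Finally, computing $\int_{-1}^1 \deg(\Pi_c)\,dc$ via formula \eqref{eqn:degree} and using $M_K(H_{t_i})/l_i \to 0$ together with the non-vanishing of $u_\infty$, one finds that the average over $c$ of $\text{rank}(\mathcal{E})\deg(\Pi_c) - \text{rank}(\Pi_c)\deg(\mathcal{E})$ is non-negative and not identically zero, forcing at least one $\Pi_c$ to be a proper destabilizer.

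The main obstacle will be the measure-theoretic co-area step that promotes the weighted Dirichlet-type bound to $L_1^2$ regularity of the $\Pi_c$ with the correct weak-holomorphicity condition; this is the content of the Uhlenbeck--Yau construction refined by Simpson, and it adapts to the orbifold setting because all the required estimates are local on orbifold charts and invariant under the local group actions.
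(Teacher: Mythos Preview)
Your outline follows the Simpson/Uhlenbeck--Yau architecture, but it departs from the paper at the decisive step and leaves a real gap there. The paper does not pass from a single finite $\Phi$-weighted Dirichlet integral (via Fatou) to spectral projections by a co-area argument. Instead it establishes the sharp family of inequalities (Lemma~\ref{lem:essential}): for \emph{every} smooth positive $\Phi$ with $\Phi(u,v)<(u-v)^{-1}$ when $u>v$,
\[
i\int_{\mathcal{X}}\textnormal{Tr}(u_\infty\Lambda F_K)\,\frac{\omega^n}{n!}+\int_{\mathcal{X}}\langle \Phi(u_\infty)(\bar\partial u_\infty),\bar\partial u_\infty\rangle_K\,\frac{\omega^n}{n!}\leqslant 0.
\]
By taking $\Phi=N\Phi_0^2$ with $N$ arbitrary one forces $\bar\partial\,\textnormal{Tr}(\varphi(u_\infty))=0$ for every smooth $\varphi$, so the eigenvalues of $u_\infty$ are \emph{constant} almost everywhere. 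Only after this does one obtain a \emph{finite} flag $\pi_1\subset\cdots\subset\pi_{r-1}$ of constant-rank projections; their weak holomorphicity $(I-\pi_\alpha)\bar\partial\pi_\alpha=0$ and the destabilizing inequality are then deduced by re-applying Lemma~\ref{lem:essential} with $\Phi$ tailored to the finitely many constant eigenvalues, together with the telescoping identity $u_\infty=\nu_r I_{\mathcal{E}}-\sum_\alpha(\nu_{\alpha+1}-\nu_\alpha)\pi_\alpha$.

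Your proposed replacement---applying $\chi_{(-\infty,c]}$ directly to $u_\infty$ and invoking a ``co-area formula''---does not go through without the constancy of eigenvalues. If the eigenvalues of $u_\infty$ vary, then $\Pi_c$ has jumping rank, applying the discontinuous function $\chi_{(-\infty,c]}$ to an $L_1^2$ endomorphism need not produce an $L_1^2$ section, and there is no mechanism to obtain $(I-\Pi_c)\bar\partial\Pi_c=0$ from a single finite weighted integral. The averaging step $\int_{-1}^1\deg(\Pi_c)\,dc$ likewise needs the full strength of Lemma~\ref{lem:essential} (with $\Phi$ approaching $(u-v)^{-1}$), not merely finiteness from Fatou. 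Two smaller points: the paper normalizes by $\ell_k=\norm{|s_k|^2}_{L^2}^{1/2}$ (the $L^4$ norm of $s_k$) rather than the $C^0$ norm, which is what makes the non-triviality $\norm{u_\infty}_{L^4}=1$ immediate; and along the heat flow one already has $\int_{\mathcal{X}}\textnormal{Tr}(s_t)\,\omega^n=0$, so no trace subtraction is needed, and in any case such a subtraction is not what guarantees $u_\infty\neq 0$.
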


\begin{lemma}\label{lem:weak}
Let $\Pi$ be a weakly holomorphic subbundle of $\mathcal{E}$. Then there is a coherent subsheaf $\mathcal{F}$ of $\mathcal{E}$ and an analytic subvariety $\mathcal{V}$ of codimension at least two in $\mathcal{X}$ such that 
\begin{enumerate}
\item[(i)] The map $\Pi$ is smooth away from $\mathcal{V}$ and there we have $\Pi = \Pi^* = \Pi^2$ and $(I_{\mathcal{E}} - \Pi) \circ \bar{\partial} \Pi = 0$. 
\item[(ii)] Outside of $\mathcal{V}$ the subsheaf $\mathcal{F}$ agrees with the image of $\Pi$ and is a holomorphic subbundle of $\mathcal{E}|_{\mathcal{X}\setminus \mathcal{V}}$. 
\end{enumerate}
\end{lemma}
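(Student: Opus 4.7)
The plan is to reduce the statement to the manifold version of this regularity theorem from \cite{uy} or \cite{popovici} by working in local orbifold charts, and then to use uniqueness/naturality of the resulting sheaf to assemble the local data into global orbifold data.

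First, fix an orbifold chart $(U_\alpha, G_\alpha, \pi_\alpha)$ of $\mathcal{X}$ with corresponding bundle $E_\alpha$. The endomorphism $\Pi$ determines a $G_\alpha$-invariant $L_1^2$ section $\Pi_\alpha$ of $\text{End}(E_\alpha)$ satisfying $\Pi_\alpha = \Pi_\alpha^* = \Pi_\alpha^2$ and $(I_{E_\alpha} - \Pi_\alpha)\bar{\partial}\Pi_\alpha = 0$; that is, $\Pi_\alpha$ is a weakly holomorphic subbundle of $E_\alpha$ in the manifold sense of \cite{uy}. I would then invoke the Uhlenbeck--Yau regularity theorem as established in \cite{uy} (or via the alternative argument in \cite{popovici}) to obtain an analytic subvariety $V_\alpha \subset U_\alpha$ of codimension at least two together with a coherent subsheaf $\mathcal{F}_\alpha$ of the sheaf of sections of $E_\alpha$, such that $\Pi_\alpha$ is smooth on $U_\alpha \setminus V_\alpha$ and coincides there with the orthogonal projection onto $\mathcal{F}_\alpha$, which is a holomorphic subbundle on $U_\alpha \setminus V_\alpha$. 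All of the analytical ingredients in that proof (Sobolev embedding, elliptic regularity for $\bar{\partial}$, the fact that a torsion-free coherent sheaf is locally free off a codimension-two subset, and Hartogs-type extension across codimension-two subvarieties) are purely local statements on complex manifolds and so carry over unchanged to the chart $U_\alpha$.

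Next I would upgrade this local data to an orbifold subsheaf. The subvariety $V_\alpha$ and subsheaf $\mathcal{F}_\alpha$ are uniquely determined by $\Pi_\alpha$: for example, $V_\alpha$ is characterized by the locus where $\Pi_\alpha$ fails to be continuous (together with the points needed to extend $\mathcal{F}_\alpha$ coherently), and $\mathcal{F}_\alpha$ is characterized off $V_\alpha$ as the image of $\Pi_\alpha$. Since $\Pi_\alpha$ is $G_\alpha$-invariant, this uniqueness forces $V_\alpha$ to be $G_\alpha$-invariant and $\mathcal{F}_\alpha$ to carry a canonical $G_\alpha$-equivariant structure. For each embedding $\lambda : U_\alpha \to U_\beta$ of charts, the compatibility $\lambda^* \Pi_\beta = \Pi_\alpha$ combined with the same uniqueness principle yields a canonical sheaf isomorphism $\tau_\lambda : \mathcal{F}_\alpha \to \lambda^*\mathcal{F}_\beta$ and the equality $V_\alpha = \lambda^{-1}(V_\beta)$, and one checks that these $\tau_\lambda$ satisfy the required cocycle relation. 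The data $\{V_\alpha\}$ and $\{\mathcal{F}_\alpha\}$ then assemble into an analytic subvariety $\mathcal{V}$ of $\mathcal{X}$ of codimension at least two and a coherent subsheaf $\mathcal{F}$ of $\mathcal{E}$, and conclusions (i) and (ii) hold by construction.

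The main obstacle is, of course, the manifold-level regularity statement itself: one must show that the (weakly) nonlinear first-order equation $(I - \Pi_\alpha)\bar{\partial}\Pi_\alpha = 0$ forces the $L_1^2$ projection $\Pi_\alpha$ to be sufficiently regular that its image generates a coherent subsheaf away from a small analytic set. This is precisely the delicate content of \cite{uy} (reworked in \cite{popovici}), and no new ideas are needed beyond adapting that argument to each chart; the orbifold aspect is handled entirely by the equivariance/uniqueness bookkeeping described above.
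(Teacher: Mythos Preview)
Your approach is correct but takes a genuinely different route from the paper. The paper proceeds by first developing the orbifold Grassmannian bundle $\text{Gr}(s,\mathcal{E}) \to \mathcal{X}$: it constructs the pullback $p^*\mathcal{E}$, the universal subbundle $\mathcal{S} \subset p^*\mathcal{E}$, and the pushforward of coherent sheaves along the projection from an analytic subvariety of $\text{Gr}(s,\mathcal{E})$ (invoking Grauert's direct image theorem chartwise for coherence). It then observes that any rational section of $\text{Gr}(s,\mathcal{E})$ yields a coherent subsheaf of $\mathcal{E}$ by taking the closure $\mathcal{Y}$ of its image and pushing forward $\mathcal{S}|_{\mathcal{Y}}$. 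The appeal to \cite{uy} is only for the analytic fact that $\Pi$ extends to a rational section of $\text{Gr}(s,\mathcal{E})$; the coherence of $\mathcal{F}$ then comes from the Grassmannian machinery rather than from the local sheaf produced in \cite{uy}. By contrast, you bypass the Grassmannian entirely: you invoke the full manifold-level conclusion of \cite{uy} (or \cite{popovici}) on each chart to produce $V_\alpha$ and $\mathcal{F}_\alpha$, and then glue via the naturality/uniqueness of these objects under $G_\alpha$ and under chart embeddings. The paper's approach buys an explicit global construction of $\mathcal{F}$ and, as a by-product, orbifold versions of pullback, universal subbundle, and proper pushforward that may be of independent use; your approach is lighter and more direct, at the cost of leaning on the uniqueness claims (that $V_\alpha$ and $\mathcal{F}_\alpha$ are canonically determined by $\Pi_\alpha$) to obtain equivariance and embedding-compatibility, which are true but merit a sentence of justification.
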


Lemma \ref{lem:proper} is proved for manifolds in Proposition 5.3 of \cite{simpson}, and exactly the same method of proof applies in our setting. We reserve the final section following this one for a discussion of this method. The basic idea is the following. Assuming $M_K$ is not proper, we can obtain a sequence $s_k$ of sections of $\text{End}(\mathcal{E})$ with larger and larger norms. An appropriate normed sequence $u_k$ then tends to a weak limit $u_\infty$ in $L^2_1$, whose eigenvalues are constant almost everywhere. The eigenspaces of $u_\infty$ then give rise to a filtration of $\mathcal{E}$ by $L_1^2$-subbundles, for which, it is possible to show that one must be destabilizing.  

Assuming Lemma \ref{lem:proper} then, for now,  it remains only to discuss Lemma \ref{lem:weak}. A version of this result can be found in the original paper by Uhlenbeck-Yau \cite{uy}, and we aim to explain how it extends to the orbifold setting. 

For the holomorphic vector bundle $\mathcal{E}$ of rank $r$ over $\mathcal{X}$, it is possible to form the Grassmannian bundle $\text{Gr}(s,\mathcal{E})$ of $s$-planes in $\mathcal{E}$, which is a holomorphic fiber bundle over $\mathcal{X}$ with fiber $\text{Gr}(s,r)$. Locally if $E_\alpha$ is a $G_\alpha$-equivariant vector bundle of rank $r$ over a chart $U_\alpha$, then the Grassmannian bundle $\text{Gr}(s,\mathcal{E})$ associates to $U_\alpha$ the fiber bundle $\text{Gr}(s,E_\alpha)$ of $s$-planes in $E_\alpha$, where the fiber over a point $x \in U_\alpha$ is the Grassmannian $\text{Gr}(s,(E_\alpha)_x)$ of $s$-planes in the fiber $(E_\alpha)_x$. The bundle $\text{Gr}(s, \mathcal{E}_\alpha)$ enjoys an induced action of $G_\alpha$ on it coming from the action of $G_\alpha$ on $E_\alpha$, and the induced action is such that the natural projection onto $U_\alpha$ is $G_\alpha$-equivariant. For an embedding $\lambda : U_\alpha \to U_\beta$, the bundle isomorphism $E_\alpha \to \lambda^*E_\beta$ induces a bundle isomorphism $\text{Gr}(s, E_\alpha) \to \lambda^*\text{Gr}(s, E_\beta) \simeq \text{Gr}(s, \lambda^*E_\beta)$.  

A holomorphic subbundle $\mathcal{E}'$ of $\mathcal{E}$ of rank $s$ determines a section of the fiber bundle $\text{Gr}(s,\mathcal{E})$. In addition, any section of the bundle $\text{Gr}(s,\mathcal{E})$ determines a holomorphic subbundle $\mathcal{E}'$ of $\mathcal{E}$ which corresponds to the image of the section in $\text{Gr}(s, \mathcal{E})$. 

If $p : \text{Gr}(s, \mathcal{E}) \to \mathcal{X}$ denotes the projection map, then there is a way of pulling back the vector bundle $\mathcal{E}$ along $p$ to obtain a vector bundle $p^*\mathcal{E}$ of rank $r$ over $\text{Gr}(s, \mathcal{E})$, which is described as follows. There is an atlas of charts $(U_\alpha, G_\alpha, \pi_\alpha)$ for $\mathcal{X}$ such that $(U_\alpha \times \text{Gr}(s,r), G_\alpha, \pi_\alpha')$ is an atlas of charts for $\text{Gr}(s, \mathcal{E})$, where $\pi_\alpha'$ denotes the natural map from $U_\alpha \times \text{Gr}(s,r)$ to its image in $\text{Gr}(s, \mathcal{E})$. To each embedding of charts $\lambda : U_\alpha \to U_\beta$, there corresponds an embedding of charts $\lambda' : U_\alpha \times \text{Gr}(s, r) \to U_\beta \times \text{Gr}(s,r)$ such that the diagram 
\begin{align}\label{eqn:diagram}
\xymatrix{U_\alpha \times \text{Gr}(s,r) \ar[r]^{\lambda'} \ar[d]_{\text{pr}_1^\alpha}& U_\beta \times \text{Gr}(s,r) \ar[d]^{\text{pr}_1^\beta} \\
U_\alpha \ar[r]_{\lambda} & U_\beta,
}
\end{align}
commutes, where $\text{pr}_1^\alpha$ denotes projection onto the first factor. The vector bundle $\mathcal{E}$ associates to each $U_\alpha$ a $G_\alpha$-equivariant vector bundle $E_\alpha$, and we may consider the pullback $p^*E_\alpha = (\text{pr}_1^\alpha)^* E_\alpha$ of $E_\alpha$ along the projection $U_\alpha \times \text{Gr}(s, r)$ onto the first factor. The pullback $p^*E_\alpha$ enjoys an action of $G_\alpha$ in the following manner: if $g \in G_\alpha$ and if $\xi$ is an element of $p^*E_\alpha$ in the fiber over $(x, V) \in U_\alpha \times \text{Gr}(s,r)$, then in fact $\xi$ is an element in the fiber $(E_\alpha)_x$ and $g \cdot \xi$ is an element of the fiber $(E_\alpha)_{g \cdot x} = (p^*E_\alpha)_{g\cdot(x, V)}$. In addition, to each embedding of charts $\lambda' : U_\alpha \times \text{Gr}(s,r) \to U_\beta \times \text{Gr}(s,r)$, there corresponds a bundle isomorphism $p^*E_\alpha \to (\lambda')^*p^*E_\beta$ which is described as $(\text{pr}_1^\alpha)^* \lambda_*$ where $\lambda_* : E_\alpha \to \lambda^*E_\beta$ is the bundle isomorphism induced by $\lambda$. This makes sense because there is an isomorphism of bundles $(\lambda')^*p^*E_\beta \simeq (\text{pr}_1^\alpha)^*\lambda^*E_\beta$ by the commutativity of the diagram \eqref{eqn:diagram}. 

\begin{remark}\label{rmk:pullback}
We remark that using the previous construction, it is actually possible to pull back a vector bundle $\mathcal{E}$ over $\mathcal{X}$ along the projection map $\mathcal{E}' \to \mathcal{X}$ of \emph{any} fiber bundle $\mathcal{E}'$ over $\mathcal{X}$.  However, it is not immediately clear that this construction of the pullback is readily available for each smooth map of orbifolds $\mathcal{X}' \to \mathcal{X}$. In Section 4.4 of \cite{cr}, the authors introduce the notion of a ``good'' smooth map of orbifolds, and using this notion, they show that a vector bundle may be pulled back along such maps. In particular, the projection map $\mathcal{E}' \to \mathcal{X}$ for a fiber bundle is a ``good'' map, so their construction applies in this situation, as we have just described. 
\end{remark}

There is a universal subbundle $\mathcal{S}$ of $p^*\mathcal{E}$ of rank $s$ over $\text{Gr}(s, \mathcal{E})$ described as the incidence correspondence in the usual way.

If $\mathcal{Y}$ is an analytic subvariety of $\text{Gr}(s, \mathcal{E})$ and $\mathcal{F}$ is a coherent sheaf over $\mathcal{Y}$, then there is a way of pushing forward the sheaf via the restriction of the projection $p : \text{Gr}(s, \mathcal{E}) \to \mathcal{X}$ to $\mathcal{Y}$ to obtain a coherent sheaf $p_*\mathcal{F}$ on $\mathcal{X}$ as follows. The subvariety $\mathcal{Y}$ associates to each chart $U_\alpha \times \text{Gr}(s, r)$ a $G_\alpha$-invariant subvariety $V_\alpha \subset U_\alpha \times \text{Gr}(s,r)$ and the sheaf $\mathcal{F}$ associates a $G_\alpha$-equivariant sheaf $\mathcal{F}_\alpha$ over $V_\alpha$.  We may consider the pushforward $p_*\mathcal{F}_\alpha$ onto $U_\alpha$ using the projection $\text{pr}_1^\alpha$ onto the first factor. The resulting sheaf $p_*\mathcal{F}_\alpha$ is $G_\alpha$-equivariant since $\text{pr}_1^\alpha$ is. For each embedding $\lambda : U_\alpha \to U_\beta$ of charts, there is an isomorphism of sheaves $p_*\mathcal{F}_\alpha \to \lambda^*p_*\mathcal{F}_\beta$ described as $(\text{pr}_1^\alpha)_*\tau_{\lambda'}$, where $\tau_{\lambda'}$ denotes the isomorphism of sheaves $\mathcal{F}_\alpha \to \lambda'^*\mathcal{F}_\beta$. This makes sense because there is an isomorphism of sheaves $\lambda^*p_*\mathcal{F}_\beta \simeq (\text{pr}_1^\alpha)_* \lambda'^*\mathcal{F}_\beta$ by the commutativity of the following diagram (compare to \eqref{eqn:diagram}): 
\begin{align*}
\xymatrix{
V_\alpha \ar[r]^{\lambda'|_{V_\alpha}} \ar[d] & V_\beta \ar[d] \\
U_\alpha \times \text{Gr}(s,r) \ar[r]^{\lambda'} \ar[d]_{\text{pr}_1^\alpha}& U_\beta \times \text{Gr}(s,r) \ar[d]^{\text{pr}_1^\beta} \\
U_\alpha \ar[r]^{\lambda} & U_\beta.
}
\end{align*}
In addition, the resulting sheaf is coherent by the Grauert direct image theorem \cite{grauert} because the maps $\text{pr}_1^\alpha$ are proper maps between complex spaces (as $\mathcal{Y}$ is compact and properness is preserved under base change).

By a rational map from a holomorphic orbifold $\mathcal{X}$ into another $\mathcal{X}'$ we mean we are given an analytic subvariety  $\mathcal{V}$ of codimension at least $2$ or more in $\mathcal{X}$ together with a holomorphic map from $\mathcal{X} \setminus \mathcal{V}$ into $\mathcal{X}'$.

We assert that a rational section of $\text{Gr}(s,\mathcal{E})$ over $\mathcal{X}$ determines a coherent subsheaf of $\mathcal{E}$ in the following manner. Let $\mathcal{Y}$ denote the closure of the image of the section in $\text{Gr}(s, \mathcal{E})$. The restriction of the universal bundle $\mathcal{S}$ to $\mathcal{Y}$ is a coherent sheaf of rank $s$ over $\mathcal{Y}$. In addition, as a closed subset of a compact space, $\mathcal{Y}$ is compact itself, and so the projection of $\mathcal{Y}$ onto $\mathcal{X}$ is proper. Pushing forward the restriction $\mathcal{S}|_{\mathcal{Y}}$ of the universal bundle via the projection of $\mathcal{Y}$ onto $\mathcal{X}$, we obtain a sheaf $\mathcal{F}$ over $\mathcal{X}$, which is coherent by our above observations. 

\medskip

\noindent \emph{Proof of Lemma \ref{lem:weak}}.
A weakly holomorphic subbundle $\Pi$ determines a map from a set of full measure in $\mathcal{X}$ to the total space of the bundle $\text{Gr}(s, \mathcal{E})$. Uhlenbeck-Yau demonstrate how the assumptions $\Pi = \Pi^2 = \Pi^* \in L_1^2$ and $(I_{\mathcal{E}} - \Pi) \bar{\partial}\Pi = 0$ imply that $\Pi$ extends to a rational section of the bundle $\text{Gr}(s, \mathcal{E})$. The previous discussion explains how this rational section furnishes a coherent subsheaf of $\mathcal{E}$ over $\mathcal{X}$. \hfill $\Box$

\section{Simpson's method}

This section is devoted to proving Lemma \ref{lem:proper} following an approach from \cite{simpson}. We first require a somewhat technical estimate relating the $C^0$-norm to the $L^2$-norm for solutions to the heat flow. This result is intended to replace Assumption 3 from \cite{simpson}. It is important to note that in this section all inner products and norms are to be computed with respect to $K$ unless otherwise indicated. In particular, this means that we will use the notation $L^p$ to denote the $L^p$-norm of a section with respect to the fixed metric $K$. 

\begin{lemma}\label{lem:C^0control}
Fix a metric $K$. Then there are positive constants $C_1, C_2$ such that the following is true. Let $H_t$ be a solution to the heat flow with initial condition $H_0 = K$ and write $H_t = Ke^{s_t}$ for a path $t \mapsto s_t$ of self-adjoint endomorphisms with initial condition $s_t = 0$. Then for any $t$ we have 
\[
\sup_{\mathcal{X}}|s_t|_K \leqslant C_1 + C_2 \norm{|s_t|^2_K}_{L^2}^{1/2}.
\]
\end{lemma}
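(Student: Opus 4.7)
The plan is to apply Moser iteration to $u := |s_t|_K^2$, after first showing that, along the heat flow, $u$ is a subsolution of a linear elliptic inequality $\Delta u \le A + Bu$ with constants $A,B$ depending only on the fixed data $(\mathcal{X},\omega,K,\mathcal{E})$ and not on $t$. Apart from the uniform $C^0$-bound on $\Lambda F_{H_t}$ that is already guaranteed by Proposition \ref{prop:decreasing}, no further heat-flow input is needed.

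First I apply Lemma \ref{lem:curvaturerelations}(v) to $H = H_t = Ke^{s_t}$ to obtain
\[
\Delta |s_t|_K^2 = \langle 2i\Lambda(F_{H_t} - F_K), s_t\rangle - \langle i\Lambda F_K s_t, s_t\rangle - |d_K s_t|^2 .
\]
By Proposition \ref{prop:decreasing}, $\sup_{\mathcal{X}}|\Lambda F_{H_t}-\lambda I_{\mathcal{E}}|_K$ is nonincreasing in $t$, so $|\Lambda F_{H_t}|_K$ is uniformly bounded, independently of $t$, by its value at $t=0$ plus $|\lambda I_{\mathcal{E}}|_K$; and $|\Lambda F_K|_K$ is itself bounded on the compact orbifold $\mathcal{X}$. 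Discarding the nonpositive term $-|d_K s_t|^2$ and using Cauchy--Schwarz together with $|s_t|_K \le \tfrac{1}{2}(1+|s_t|_K^2)$, we arrive at
\[
\Delta |s_t|_K^2 \le A + B|s_t|_K^2
\]
for constants $A,B$ independent of $t$.

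Next I apply standard Moser iteration to the nonnegative subsolution $u = |s_t|_K^2$. On the compact K\"ahler orbifold, the required Sobolev embedding $L_1^2 \hookrightarrow L^q$ for some $q > 2$ holds: in each orbifold chart $(U_\alpha, G_\alpha, \pi_\alpha)$, the classical Sobolev inequality on $U_\alpha$ restricts to $G_\alpha$-invariant functions, and patching with a partition of unity subordinate to a finite atlas yields a global Sobolev inequality on $\mathcal{X}$. The iterative argument applied to $u \ge 0$ with $\Delta u \le A + Bu$ then produces constants $C_3, C_4$ so that
\[
\sup_\mathcal{X} u \le C_3 + C_4 \|u\|_{L^2} .
\]
Taking square roots and using $\sqrt{a+b}\le\sqrt a+\sqrt b$ gives
\[
\sup_\mathcal{X} |s_t|_K \le \sqrt{C_3} + \sqrt{C_4}\;\big\||s_t|_K^2\big\|_{L^2}^{1/2},
\]
so setting $C_1 = \sqrt{C_3}$ and $C_2 = \sqrt{C_4}$ yields the desired estimate.

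The only non-routine aspect is ensuring Moser iteration functions on the orbifold; but since $u = |s_t|_K^2$ is an honest continuous function on the underlying space $X$ (not a section of any auxiliary bundle), once the orbifold Sobolev inequality is in hand the iteration is purely function-theoretic and proceeds verbatim as in the manifold case. I therefore expect no substantive obstacle beyond careful bookkeeping of constants.
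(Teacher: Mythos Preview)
Your argument is correct and shares its first half with the paper's proof: both start from Lemma \ref{lem:curvaturerelations}(v), drop the gradient term, and invoke Proposition \ref{prop:decreasing} to obtain the uniform differential inequality $\Delta|s_t|_K^2 \le A + B|s_t|_K^2$. The divergence is only in how the $C^0$-from-$L^2$ bound is extracted from this inequality. The paper uses the Green's function representation: evaluate $|s_t|^2$ at its maximum point via Green's formula, shift the Green's function to be nonnegative, and apply Cauchy--Schwarz together with the uniform $L^2$-bound $\|G_p\|_{L^2}\le C$ (independent of the base point) to reach $\sup|s_t|^2 \le C_1 + C_2\||s_t|^2\|_{L^2}$ directly. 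Your Moser iteration route reaches the same conclusion but trades the single Green's-function estimate for the orbifold Sobolev inequality plus an iterative limit; this is slightly heavier machinery, though it has the advantage of being more robust (it would still work, for instance, if the Green's function were only in $L^p$ for some $p<2$). Either way the argument is short and the constants depend only on $(\mathcal{X},\omega,K)$, so there is no substantive gap.
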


\begin{proof}
In the course of the proof, we let $C_1, C_2, \ldots$ denote constants that are independent of $t$ but which may vary from step to step. Recall from Lemma \ref{lem:curvaturerelations} (v),  we know that 
\[
\Delta |s_t|_K^2 = \langle 2i \Lambda (F_t - F_K), s_t \rangle - \langle i \Lambda F_K s_t, s_t \rangle - |d_Ks_t|^2  
\]
and hence because the last term is a square we find
\[
\Delta |s_t|_K^2 \leqslant \langle 2i \Lambda (F_t - F_K), s_t \rangle - \langle i \Lambda F_K s_t, s_t \rangle.
\]
Proposition \ref{prop:decreasing} (with the Schwarz inequality) implies there are positive constants $C_1, C_2$ such that 
\begin{align*}
\Delta |s_t|_K^2 &\leqslant C_1|s_t| + C_2|s_t|^2 \\
&\leqslant C_1 + C_2 |s_t|^2.
\end{align*}

Let $p_t$ be a point where $|s_t|_K^2$ achieves its maximum. Let $G_{p_t} \in L_2^2$ be Green's function for $\Delta$. Green's formula gives that 
\[
|s_t|^2(p_t) = \frac{1}{\text{vol}(\mathcal{X})}\int_{\mathcal{X}}|s_t|^2 \frac{\omega^n}{n!} + \int_{\mathcal{X}} G_{p_t} \Delta|s_t|^2 \frac{\omega^n}{n!}.
\] 
Because $G_{p_t}$ is bounded from below, we may assume by shifting by a constant that $G_{p_t}$ is positive (c.f. \cite{aubinsome}). Moreover, we may assume that $G_{p_t}$ is square integrable. In fact, because $\mathcal{X}$ is compact, there is a constant $C$ (independent of $t$) such that $\norm{G_{p_t}}_{L^2} \leqslant C$. Using the previous paragraph and the Schwarz inequality we find that we have an inequality of the form 
\[
|s_t|^2(p_t) \leqslant C_1\norm{|s_t|^2}_{L^1} + C_2 + C_3 \norm{|s_t|^2}_{L^2}.
\]
The inclusion of $L^2$ into $L^1$ implies that 
\[
|s_t|^2(p_t) \leqslant  C_1 + C_2 \norm{|s_t|^2}_{L^2}.
\]

Now, we also know that  
\[
(\sup_{\mathcal{X}}|s_t|)^2 \leqslant 1 + \sup_{\mathcal{X}}|s_t|^2,
\]
and so in conjunction with the previous paragraph we have 
\[
\sup |s_t| \leqslant C_1 + C_2 \norm{|s_t|^2}_{L^2}^{1/2},
\]
as desired. 
\end{proof}

It is also prudent to understand the variation of $M_K$ along a path of the form $t \mapsto Ke^{ts}$ for a fixed endomorphism $s$ of $\mathcal{E}$ that is self-adjoint with respect to $K$ and which satisfies $\int_{\mathcal{X}} \text{Tr}(s) \omega^n = 0$. We will see that the functional $M_K$ is convex along such a path.

\begin{proposition}\label{prop:M_Kconvex}
If $H_t = Ke^{ts}$ for an endomorphism $s$ with $\int_{\mathcal{X}} \textnormal{Tr}(s) \omega^n = 0$  which is self-adjoint with respect to $K$, then 
\begin{align*}
\frac{\partial}{\partial t}M_K(Ke^{ts}) = 2i \int_{\mathcal{X}} \textnormal{Tr}(s F_t) \wedge \frac{\omega^{n-1}}{(n-1)!}
\end{align*}
and 
\begin{align*}
\frac{\partial^2}{\partial t^2} M_K(Ke^{ts}) =  2 \int_{\mathcal{X}} |\bar{\partial}s|_{H_t}^2 \frac{\omega^n}{n!}.
\end{align*}
\end{proposition}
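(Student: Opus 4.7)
My plan is to derive the first-order formula directly from the variational identity of Proposition \ref{prop:M_K}(i), and then to differentiate once more, applying Lemma \ref{lem:curvaturerelations} with $H_t$ playing the role of the base metric. Since $s$ commutes with $e^{ts}$, one has $\dot{H}_t = H_t s$, hence $H_t^{-1}\dot{H}_t = s$. Substituting into Proposition \ref{prop:M_K}(i) gives a sum of two pieces, the second of which is a multiple of $\int_{\mathcal{X}}\text{Tr}(s)\,\omega^n$ and vanishes by hypothesis, leaving exactly the stated first derivative.

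For the second derivative, I differentiate under the integral and convert the wedge with $\omega^{n-1}/(n-1)!$ into the $\Lambda$-contraction to obtain
$\frac{\partial^2}{\partial t^2}M_K(H_t) = 2\int_{\mathcal{X}}\text{Tr}(s\cdot i\Lambda \dot F_t)\,\omega^n/n!$. The key identity to establish is $i\Lambda \dot F_t = \Delta_{\partial_{H_t}}s$. I first check that $s$ remains self-adjoint with respect to $H_t$ (using that $s$ commutes with $e^{ts}$ and that $e^{ts}$ is $K$-self-adjoint), and that $H_{t+\epsilon} = H_t e^{\epsilon s}$ (using the same commutation). Then Lemma \ref{lem:curvaturerelations}(ii), applied with $(K, H)$ replaced by $(H_t, H_{t+\epsilon})$ and $s$ replaced by $\epsilon s$, yields $\epsilon\,\Delta_{\partial_{H_t}}s = i\Lambda(F_{H_{t+\epsilon}} - F_{H_t})$; dividing by $\epsilon$ and sending $\epsilon \to 0$ produces the desired identity.

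Once this is in hand, $\int_{\mathcal{X}}\text{Tr}(s\,\Delta_{\partial_{H_t}}s)\,\omega^n/n!$ equals $\norm{\partial_{H_t}s}_{L^2_{H_t}}^2$ by recognizing the trace as the $H_t$-inner product on $\text{End}(\mathcal{E})$ (since $s = s^{*_{H_t}}$) and integrating by parts using $\Delta_{\partial_{H_t}} = \partial_{H_t}^*\partial_{H_t}$. Finally, Lemma \ref{lem:curvaturerelations}(i) applied with $H_t$ in place of $K$ gives the pointwise adjoint relation between $\partial_{H_t}s$ and $\bar{\partial}s$, whence the pointwise norm equality $|\partial_{H_t}s|_{H_t} = |\bar{\partial}s|_{H_t}$ for the self-adjoint $s$; this converts $\norm{\partial_{H_t}s}_{L^2_{H_t}}^2$ into $\int_{\mathcal{X}}|\bar{\partial}s|_{H_t}^2\,\omega^n/n!$, as required. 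The main delicate point is the passage $\epsilon \to 0$ in invoking Lemma \ref{lem:curvaturerelations}(ii) at the shifted base — essentially extracting the first-order variational content from the finite-difference identity — together with verifying that the self-adjointness hypothesis on $s$ propagates along $H_t = Ke^{ts}$.
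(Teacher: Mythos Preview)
Your proof is correct. The first-variation step is identical to the paper's. For the second variation you take a slightly different route: the paper computes $\dot F_t = \bar\partial\partial_K s$ directly (working with the fixed base $K$), then performs two integrations by parts and applies the K\"ahler identity $\bar\partial^* = -i\Lambda\partial_K$ explicitly before invoking self-adjointness of $s$ with respect to $H_t$ at the very end. You instead shift the base metric to $H_t$, package the K\"ahler-identity computation into a citation of Lemma \ref{lem:curvaturerelations}(ii) via the finite increment $H_{t+\epsilon} = H_t e^{\epsilon s}$, and are left with a single integration by parts in which the adjoint $\partial_{H_t}^*$ already matches the inner product $\langle\cdot,\cdot\rangle_{H_t}$. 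This buys you cleaner bookkeeping---the adjoints and inner products line up automatically---at the mild cost of the limiting step $\epsilon\to 0$ you flag. The two arguments have the same analytic content.
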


\begin{proof}
Note that along this path, we have $\dot{H}_t = H_ts$, and so using Proposition \ref{prop:M_K} (i), we readily verify the formula of the first variation (using that $\int_{\mathcal{X}}\text{Tr}(s) \omega^n = 0$). Upon taking another derivative, we find 
\[
\frac{\partial^2}{\partial t^2} M_K(Ke^{ts}) = 2i \int_{\mathcal{X}} \textnormal{Tr}(s \dot{F}_t) \wedge \frac{\omega^{n-1}}{(n-1)!}.
\]
But because $H_t = Ke^{ts}$, we have that the curvatures are related by 
\begin{align*}
F_t &= F_K +  \bar{\partial} (H_t^{-1} \partial_{K} H_t) = F_K + t\bar{\partial}(\partial_{K} (s))
\end{align*}
so that $\dot{F}_t = \bar{\partial} \partial_{K} s$, from which we find 
\[
\frac{\partial^2}{\partial t^2} M_K(Ke^{ts}) = 2i \int_{\mathcal{X}} \textnormal{Tr}(s \wedge \bar{\partial} \partial_{K}s) \wedge \frac{\omega^{n-1}}{(n-1)!}.
\]
One integration by parts shows that 
\[
\frac{\partial^2}{\partial t^2} M_K(Ke^{ts}) = -2i \int_{\mathcal{X}} \textnormal{Tr}(\bar{\partial} s \wedge \partial_{K}s) \wedge \frac{\omega^{n-1}}{(n-1)!}.
\]
And another shows that 
\[
\frac{\partial^2}{\partial t^2} M_K(Ke^{ts}) = -2i \int_{\mathcal{X}} \textnormal{Tr}(\partial_K\bar{\partial} s \wedge s) \wedge \frac{\omega^{n-1}}{(n-1)!}.
\]
(Note that the sign is preserved here because $\bar{\partial}s$ is a $1$-form.) Then the K\"ahler identity $\bar{\partial}^* = - i\Lambda \partial$ shows that 
\[
\frac{\partial^2}{\partial t^2} M_K(Ke^{ts}) = 2 \int_{\mathcal{X}} \textnormal{Tr}(\bar{\partial}^*\bar{\partial} s \wedge s) \wedge \frac{\omega^n}{n!}.
\]
Now using that $s$ is self-adjoint with respect to $H_t$, we find that this is equal to 
\[
\frac{\partial^2}{\partial t^2} M_K(Ke^{ts}) = 2 \int_{\mathcal{X}} \langle \bar{\partial}^* \bar{\partial}s, s \rangle_{H_t} \frac{\omega^n}{n!}.
\]
This is equivalent to the desired formula. 
\end{proof}

This proposition allows one to obtain a slightly different expression for the functional $M_K$, which can be found for example in \cite{simpson, siu, jacob, donaldson2}. Indeed, let us write $M(t)$ for the value $M(t) = M_K(Ke^{ts})$. Then with this convention, we have from the previous proposition that $M'(0)$ is given by 
\[
M'(0) = 2i\int_{\mathcal{X}}\text{Tr}(s F_K) \wedge \frac{\omega^{n-1}}{(n-1)!}.
\]
The fundamental theorem of Calculus in conjunction with the previous proposition gives   
\[
M'(t) = 2i\int_{\mathcal{X}}\text{Tr}(s F_K) \wedge \frac{\omega^{n-1}}{(n-1)!} + 2 \int_{0}^t  \int_{\mathcal{X}} |\bar{\partial}s|_{H_u}^2 \frac{\omega^n}{n!} du.
\]
The condition $M(0) = 0$ implies then that $M(1)$ is given by an additional integration 
\[
M_K(Ke^s) = M(1) = 2i\int_{\mathcal{X}}\text{Tr}(s F_K) \wedge \frac{\omega^{n-1}}{(n-1)!} + 2 \int_0^1 \left(\int_{0}^t  \int_{\mathcal{X}} |\bar{\partial}s|_{H_u}^2 \frac{\omega^n}{n!} du \right) dt.
\]
We now follow \cite{donaldson2} to write the second term on the right-hand side with a local expression involving frames. 

Let us fix a smooth unitary (with respect to $K$) frame for $\mathcal{E}$ for which the matrix of $s$ with respect to this frame is diagonal with eigenvalues $\lambda_1, \ldots, \lambda_r$. (The matrix of $\bar{\partial} s$ may not be diagonal because the frame is only \emph{smooth}.) With these conventions, then the integrand of the second equality in the previous proposition becomes 
\begin{align*}
|\bar{\partial}s|_{H_t}^2 &= (\bar{\partial}s)_{\alpha}^\beta \overline{(\bar{\partial}s)_{\gamma}^\rho} (H_t)^{\alpha \gamma}(H_t)_{\beta \rho} \\
&= (\bar{\partial}s)_{\alpha}^\beta \overline{(\bar{\partial}s)_{\gamma}^\rho} (e^{-t\lambda_\alpha} \delta^{\alpha\gamma})( e^{t\lambda_\beta}\delta_{\beta \rho}) \\
&= \sum_{\alpha,\beta} |(\bar{\partial}s)_{\alpha}^\beta|^2 e^{(\lambda_\beta - \lambda_\alpha)t}.
\end{align*}
Integrating once we obtain 
\[
\int_0^t |\bar{\partial}s|_{H_u}^2 du = \sum_{\alpha,\beta} |(\bar{\partial}s)_{\alpha}^\beta|^2 \frac{e^{(\lambda_\beta - \lambda_\alpha)t}-1}{\lambda_\beta - \lambda_\alpha}.
\]
And integrating again gives 
\[
\int_0^1 \left(\int_0^t  |\bar{\partial}s|_{H_u}^2 du \right) dt = \sum_{\alpha,\beta} |(\bar{\partial}s)_\alpha^\beta|^2 \frac{e^{\lambda_\beta - \lambda_\alpha} - (\lambda_\beta - \lambda_\alpha) - 1}{(\lambda_\beta - \lambda_\alpha)^2}.
\]
What we have shown therefore is that 
\begin{align}\label{eqn:M_K}
M(Ke^s,K) = 2i \int_{\mathcal{X}} \text{Tr}(s\Lambda F_K) \frac{\omega^n}{n!} + 2 \int_{\mathcal{X}} \sum_{\alpha, \beta} |(\bar{\partial} s)_\alpha^\beta|^2 \frac{e^{\lambda_\beta - \lambda_\alpha} - (\lambda_\beta - \lambda_\alpha) -1}{(\lambda_\beta - \lambda_\alpha)^2} \frac{\omega^n}{n!},
\end{align}
where the summand is interpreted as $\frac{1}{2}|(\bar{\partial} s)_\alpha^\beta|^2$ if $\alpha = \beta$. 

Following \cite{siu} and \cite[Lemma 24]{donaldson2} it is then possible to obtain the following estimate, which we won't really need, but which we collect for completeness. 

\begin{corollary}\label{cor:M_Kest}
For any endomorphism $s$ with $\int_{\mathcal{X}} \textnormal{Tr}(s) \omega^n = 0$ that is self-adjoint with respect to $K$, we have 
\[
\norm{D_K s}_{L^1}^2 \leqslant 2 (\sqrt{2}\norm{s}_{L^1} + \textnormal{vol}(\mathcal{X}))\left(M_K(Ke^s) - 2i \int_{\mathcal{X}} \textnormal{Tr}(s\Lambda F_K) \frac{\omega^n}{n!}  \right). 
\]
\end{corollary}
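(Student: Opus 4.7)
The plan is to use the explicit local expression \eqref{eqn:M_K} for $M_K(Ke^s)$, which rewrites the quantity
\[
M_K(Ke^s) - 2i\int_{\mathcal{X}} \textnormal{Tr}(s\Lambda F_K)\,\frac{\omega^n}{n!}
\]
as $2\int_{\mathcal{X}} \sum_{\alpha,\beta} \Phi(\lambda_\beta - \lambda_\alpha)\,|(\bar{\partial}s)_\alpha^\beta|^2\, \omega^n/n!$, where $\Phi(x) := (e^x - x - 1)/x^2$ with $\Phi(0)=1/2$. The strategy is then to combine a pointwise numerical lower bound on $\Phi$ with a weighted Cauchy--Schwarz inequality.

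The first step is the elementary inequality $1/\Phi(x) \leq |x| + 2$ for all $x \in \mathbb{R}$, which I would verify by showing that the difference $f(x) := (|x|+2)(e^x - x - 1) - x^2$ is non-negative. Splitting into $x\geq 0$ and $x\leq 0$ (with $y = -x$ in the latter), one checks in each case that $f$ and $f'$ vanish at the origin while $f''$ is non-negative (for $x\leq 0$ it equals $ye^{-y}$), so $f\geq 0$. Combining this with the observation that the eigenvalues $\lambda_1,\ldots,\lambda_r$ of $s$ in a unitary frame diagonalizing $s$ satisfy $\sum\lambda_\alpha^2 = |s|_K^2$ and hence $|\lambda_\beta - \lambda_\alpha| \leq \sqrt{2}\,|s|_K$, one obtains the pointwise estimate
\[
\frac{1}{\Phi(\lambda_\beta - \lambda_\alpha)} \leq \sqrt{2}\,|s|_K + 2 \leq 2\,(\sqrt{2}\,|s|_K + 1).
\]

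Next, Lemma \ref{lem:curvaturerelations}(i) and the self-adjointness of $s$ give $|D_K s|^2 = |\partial_K s|^2 + |\bar{\partial}s|^2 = 2|\bar{\partial}s|^2 = 2\sum_{\alpha,\beta}|(\bar{\partial}s)_\alpha^\beta|^2$. Multiplying the previous inequality by $\Phi(\lambda_\beta-\lambda_\alpha)\,|(\bar{\partial}s)_\alpha^\beta|^2$ and summing over $\alpha,\beta$ then yields the pointwise bound
\[
|D_K s|^2 \leq 4\,(\sqrt{2}\,|s|_K + 1)\sum_{\alpha,\beta}\Phi(\lambda_\beta - \lambda_\alpha)\,|(\bar{\partial}s)_\alpha^\beta|^2.
\]

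Finally, setting $w := \sqrt{2}\,|s|_K + 1$, the weighted Cauchy--Schwarz inequality
\[
\|D_K s\|_{L^1}^2 \leq \left(\int_{\mathcal{X}} w\,\frac{\omega^n}{n!}\right)\left(\int_{\mathcal{X}}\frac{|D_K s|^2}{w}\,\frac{\omega^n}{n!}\right),
\]
together with $\int_{\mathcal{X}}w\,\omega^n/n! = \sqrt{2}\,\|s\|_{L^1} + \textnormal{vol}(\mathcal{X})$ and the identification via \eqref{eqn:M_K} of the second factor with at most $2(M_K(Ke^s) - 2i\int_{\mathcal{X}}\textnormal{Tr}(s\Lambda F_K)\,\omega^n/n!)$, gives the claim. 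The only genuinely non-trivial obstacle is the scalar inequality $1/\Phi(x) \leq |x|+2$ (the constant $2$ is what is needed to absorb $\Phi(0)^{-1} = 2$ at the origin while still keeping the linear growth at $-\infty$). Measurability concerns arising from the diagonalizing frame are immaterial, since the integrand on the right of \eqref{eqn:M_K} is unitarily invariant and continuous in $s$ and a diagonalizing frame exists at every point.
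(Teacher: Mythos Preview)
Your argument is correct and follows the same overall architecture as the paper's proof: a pointwise lower bound on $\Psi(u,v)=\Phi(v-u)$ in terms of $|s|_K$, followed by the weighted Cauchy--Schwarz splitting $|D_Ks| = \dfrac{|D_Ks|}{w^{1/2}}\cdot w^{1/2}$ with $w=\sqrt{2}\,|s|_K+1$. The only substantive difference is the scalar inequality you use as the starting point: the paper invokes (citing Siu) the bound $\dfrac{1}{2\sqrt{x^2+1}}\le \Phi(x)$ and then passes through $\sqrt{2|s|_K^2+1}\le \sqrt{2}\,|s|_K+1$, whereas you prove directly the slightly sharper and more convenient inequality $\dfrac{1}{|x|+2}\le \Phi(x)$, which feeds straight into the weight $w$ without an extra step. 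Your verification of that inequality via $f(x)=(|x|+2)(e^x-x-1)-x^2\ge 0$ is fine; for $x\ge 0$ one actually has $f(0)=f'(0)=f''(0)=0$ with $f'''(x)=(x+5)e^x>0$, which is a hair more than ``$f''\ge 0$'' but still elementary. The payoff of your variant is that it is self-contained and avoids the external reference, while landing on exactly the same final constant.
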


\begin{proof}
For any real number $u$, we have the following inequality 
\[
\ \frac{1}{2 \sqrt{u^2 + 1}} \leqslant \frac{e^u - u - 1}{u^2} ,
\]
which is verified, for example, in \cite{siu}. From this it follows immediately upon setting $u = \lambda_\beta - \lambda_\alpha$ that  
\[
\frac{1}{2 \sqrt{(\lambda_\beta - \lambda_\alpha)^2 + 1}} \leqslant \frac{e^{\lambda_\beta - \lambda_\alpha} - (\lambda_\beta -\lambda_\alpha)  - 1}{(\lambda_\beta - \lambda_\alpha)^2} .
\]
The inequality  
\[
2(\lambda_\beta^2 + \lambda_\alpha^2) = (\lambda_\beta - \lambda_\alpha)^2 + (\lambda_\beta + \lambda_\alpha)^2 \geqslant (\lambda_\beta - \lambda_\alpha)^2
\]
implies also that 
\[
 \frac{1}{2 \sqrt{2(\lambda_\beta^2 + \lambda_\alpha^2) + 1}} \leqslant \frac{e^{\lambda_\beta - \lambda_\alpha} - (\lambda_\beta -\lambda_\alpha)  - 1}{(\lambda_\beta - \lambda_\alpha)^2} .
\]
And finally the inequality   
\[
|s|_K^2 = \sum_{\alpha}\lambda_\alpha^2 \geqslant \lambda_\beta^2 + \lambda_\alpha^2
\]
implies 
\[
 \frac{1}{2 \sqrt{2|s|_K^2 + 1}} \leqslant \frac{e^{\lambda_\beta - \lambda_\alpha} - (\lambda_\beta -\lambda_\alpha)  - 1}{(\lambda_\beta - \lambda_\alpha)^2}.
\]
Now using that $|D_Ks|_K^2 = 2|\bar{\partial}s|_K^2$ (because $s$ is self-adjoint), we find that  
\[
\frac{|D_K s|_K^2}{4 \sqrt{2|s|_K + 1}} = \frac{|\bar{\partial} s|_K^2}{2 \sqrt{2|s|_K + 1}}  \leqslant  \sum_{\alpha, \beta} |\bar{\partial} s_\alpha^\beta|^2 \frac{e^{\lambda_\beta - \lambda_\alpha} - (\lambda_\beta - \lambda_\alpha) -1}{(\lambda_\beta - \lambda_\alpha)^2}.  
\] 
Upon integrating over $\mathcal{X}$ and using formula \eqref{eqn:M_K}, we find  
\begin{align}\label{ineq:M_K}
\frac{1}{2} \int_{\mathcal{X}} \frac{|D_K s|_K^2}{\sqrt{2|s|_K + 1}} \frac{\omega^n}{n!} \leqslant M_K(Ke^s) - 2i \int_{\mathcal{X}} \text{Tr}(s \Lambda F_K) \frac{\omega^n}{n!}.
\end{align}
On the other hand, if we write 
\[
D_Ks = \frac{D_Ks}{(2|s|_K + 1)^{1/4}} (2|s|_K + 1)^{1/4}
\]
and use the Cauchy inequality, we find that 
\begin{align*}
\left(\int_{\mathcal{X}} |D_K s|_K \frac{\omega^n}{n!} \right)^2 &\leqslant \left(\int_{\mathcal{X}}\frac{|D_K s|_K^2}{\sqrt{2|s|_K + 1}} \frac{\omega^n}{n!} \right)\left(\int_{\mathcal{X}} (2|s|_K^2 + 1)^{1/2}\frac{\omega^n}{n!} \right) \\
&\leqslant \left(\int_{\mathcal{X}}\frac{|D_K s|_K^2}{\sqrt{2|s|_K + 1}} \frac{\omega^n}{n!} \right)\left(\int_{\mathcal{X}} (\sqrt{2}|s|_K + 1)\frac{\omega^n}{n!} \right) \\
&\leqslant  \left(\int_{\mathcal{X}}\frac{|D_K s|_K^2}{\sqrt{2|s|_K + 1}} \frac{\omega^n}{n!} \right)\left( \sqrt{2} \norm{s}_{L^1} + \text{vol}(\mathcal{X}) \right),
\end{align*}
and then using \eqref{ineq:M_K} we conclude 
\[
\norm{D_K s}_{L^1}^2 \leqslant 2 \left(\sqrt{2}\norm{s}_{L^1} + \text{vol}(\mathcal{X})\right)\left(M_K(Ke^s) - 2i \int_{\mathcal{X}} \textnormal{Tr}(s\Lambda F_K) \frac{\omega^n}{n!}  \right),
\]
as desired. 
\end{proof}

\begin{notation}\label{notation1}
Let us follow \cite{simpson} to introduce briefly some notation that will allow us to express formula \eqref{eqn:M_K} in a global manner. 

For a smooth function $\varphi : \mathbb{R} \to \mathbb{R}$, an endomorphism $s$ of $\mathcal{E}$ that is self-adjoint with respect to $K$, we let $\varphi(s)$ denote the endomorphism described in the following manner. If $\{e_1, \ldots, e_r\}$ is a smooth unitary (with respect to $K$) frame for $E$ with respect to which $s$ is diagonal with entries $\lambda_1, \ldots, \lambda_r$, then $\varphi(s)$ is the endomorphism with diagonal entries $\varphi(\lambda_1), \ldots, \varphi(\lambda_r)$. 

In addition, for a smooth function $\Phi : \mathbb{R} \times \mathbb{R} \to \mathbb{R}$ of two variables, a self-adjoint endomorphism $s \in \text{End}(\mathcal{E}, K)$, and an endomorphism $A \in \text{End}(\mathcal{E})$, we let $\Phi(s)(A)$ denote the endomorphism of $\mathcal{E}$ described in the following manner. If $\{e_1, \ldots, e_r\}$ is a smooth unitary (with respect to $K$) frame of $E$ with respect to which $s$ is diagonal with eigenvalues $\lambda_1, \ldots, \lambda_r$ and $A$ has the local expression $A = A_{\alpha}^\beta e^\alpha \otimes e_\beta$ where $e^\alpha$ is the frame dual to $e_\beta$, then the endomorphism $\Phi(s)(A)$ has local expression 
\[
\Phi(s)(A) = \sum_{\alpha,\beta}\Phi(\lambda_\alpha, \lambda_\beta) A_\alpha^\beta e^\alpha \otimes e_\beta.
\]
\end{notation}

The construction $\Phi$ enables one to express the derivatives of construction $\varphi$ in the following way. 

\begin{lemma}\label{lem:chainrule}
Given a $\varphi : \mathbb{R} \to \mathbb{R}$, if we set $d\varphi : \mathbb{R} \times \mathbb{R} \to \mathbb{R}$ to be the difference quotient defined by  
\[
d\varphi(u,v) = \frac{\varphi(u) - \varphi(v)}{u-v}
\]
for $u \ne v$ and $d\varphi(u,u) = \frac{d}{du}\varphi(u)$ along the diagonal, then we have 
\[
\bar{\partial}(\varphi(s)) = d\varphi(s)(\bar{\partial}s).
\]
In addition, if $\Phi : \mathbb{R} \times \mathbb{R} \to \mathbb{R}$ is any smooth function which agrees with $d\varphi$ along the diagonal, then 
\[
\textnormal{Tr}(\Phi(s)(\bar{\partial}s)) = \textnormal{Tr}(d\varphi(s)(\bar{\partial}s)).
\]
\end{lemma}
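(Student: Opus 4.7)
\emph{Plan.} Both sides of the identity $\bar\partial(\varphi(s)) = d\varphi(s)(\bar\partial s)$ are defined intrinsically in terms of $s$ and $\bar\partial s$, so it suffices to verify the identity pointwise in any convenient frame. Fix a point $p$ (in some orbifold chart) and choose a smooth unitary frame $\{e_\alpha\}$ near $p$ for which $s(p)$ is diagonal at $p$ with eigenvalues $\lambda_1, \ldots, \lambda_r$. The endomorphism $s$ need not be diagonal away from $p$, but its matrix entries $s_\alpha^\beta$ in this frame are well-defined, and any intrinsic identity can be checked by comparing matrix entries at $p$.

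First I would verify the identity for polynomials $\varphi(x) = x^n$. Applying the Leibniz rule for $\bar\partial$ gives
\[
\bar\partial(s^n) = \sum_{k=0}^{n-1} s^k\,(\bar\partial s)\,s^{n-1-k},
\]
whose $(\alpha,\beta)$-entry at $p$ reads $(\bar\partial s)_\alpha^\beta \sum_{k=0}^{n-1}\lambda_\alpha^k \lambda_\beta^{n-1-k}$. This geometric sum equals $(\lambda_\alpha^n - \lambda_\beta^n)/(\lambda_\alpha - \lambda_\beta)$ when $\lambda_\alpha \ne \lambda_\beta$ and $n\lambda_\alpha^{n-1}$ when $\lambda_\alpha = \lambda_\beta$, which is precisely the divided difference $d(x^n)(\lambda_\alpha, \lambda_\beta)$. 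Hence the identity matches the $(\alpha,\beta)$-entry of $d\varphi(s)(\bar\partial s)$ in the sense of Notation \ref{notation1}. To pass to general smooth $\varphi$, I would approximate $\varphi$ in $C^1$ on a compact interval containing the spectrum of $s$ at $p$ by polynomials $\varphi_n$ via Weierstrass. Using the integral representation $d\varphi(u,v) = \int_0^1 \varphi'(tu + (1-t)v)\,dt$, the divided differences $d\varphi_n$ converge uniformly to $d\varphi$ on the relevant compact set, and standard spectral calculus gives $\varphi_n(s) \to \varphi(s)$ with $\bar\partial \varphi_n(s) \to \bar\partial \varphi(s)$, so the polynomial identity extends.

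For the second identity, at $p$ in the diagonalizing frame, the trace of $\Phi(s)(\bar\partial s)$ equals $\sum_\alpha \Phi(\lambda_\alpha, \lambda_\alpha)(\bar\partial s)_\alpha^\alpha$, since the off-diagonal factors $\Phi(\lambda_\alpha, \lambda_\beta)$ with $\alpha \ne \beta$ contribute no diagonal terms to $\Phi(s)(\bar\partial s)$. Because $\Phi$ agrees with $d\varphi$ on the diagonal by hypothesis, this equals $\sum_\alpha d\varphi(\lambda_\alpha, \lambda_\alpha)(\bar\partial s)_\alpha^\alpha = \text{Tr}(d\varphi(s)(\bar\partial s))$, which gives the claim.

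The main obstacle is the polynomial-to-smooth extension: one must verify that $\varphi \mapsto \varphi(s)$ and $\varphi \mapsto d\varphi(s)(\bar\partial s)$ depend continuously on $\varphi$ in an appropriate topology (e.g.\ $C^1$ on the spectrum of $s$ at $p$), so that the pointwise limit is legitimate. Once this continuity is in place, the heart of the argument is the elementary Leibniz computation for polynomials in the diagonalizing frame, while the second claim is essentially immediate from the diagonal property of the trace.
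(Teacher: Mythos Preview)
Your approach is correct but differs from the paper's. The paper works in a smooth unitary frame $e_\alpha$ that diagonalizes $s$ throughout a neighborhood (with smooth eigenvalue functions $\lambda_\alpha$), not merely at a single point. Writing $\bar\partial e_\alpha = \theta_\alpha^\beta e_\beta$ and differentiating $s = \sum_\alpha \lambda_\alpha\, e^\alpha \otimes e_\alpha$ directly, it obtains $(\bar\partial s)_\alpha^\beta = (\lambda_\alpha - \lambda_\beta)\theta_\alpha^\beta$ for $\alpha \ne \beta$ and $(\bar\partial s)_\alpha^\alpha = \bar\partial \lambda_\alpha$; the identical computation applied to $\varphi(s) = \sum_\alpha \varphi(\lambda_\alpha)\, e^\alpha \otimes e_\alpha$ produces the same expressions with $\varphi(\lambda_\alpha)$ in place of $\lambda_\alpha$, and comparing coefficients gives the identity for arbitrary smooth $\varphi$ in one stroke, with no polynomial approximation. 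Your route---establishing the case $\varphi(x)=x^n$ via Leibniz and then passing to general $\varphi$ by $C^1$ Weierstrass approximation and the integral formula for $d\varphi$---trades that single computation for an extra limiting step. The paper's argument is shorter and more explicit once one accepts the locally diagonalizing frame; your argument has the mild advantage of needing diagonalization only at a point, which sidesteps the (usually harmless) issue that a smooth diagonalizing frame can fail where eigenvalues collide. For the trace statement the two arguments are essentially identical.
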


\begin{proof}
Let $e_\alpha$ be a smooth unitary frame for $\mathcal{E}$ with respect to which a local expression for $s$ is 
\[
s = \sum_{\alpha} \lambda_\alpha e^\alpha \otimes e_\alpha
\] 
for some local smooth functions $\lambda_\alpha$. 
Let us also write 
\[
\bar{\partial} e_\alpha = \theta_{\alpha}^\beta e_\beta
\] 
for some local $(0,1)$-forms $\theta_\alpha^\beta$. The relation $e^\alpha(e_\beta) = \delta_{\beta}^\alpha$ implies then that we also have  
\[
\bar{\partial} e^\alpha = -\theta_\beta^\alpha e^\beta.
\]
It follows that a local expression for $\bar{\partial}s$ is given by 
\begin{align*}
\bar{\partial}s &= (\bar{\partial} \lambda_\alpha) e^\alpha \otimes e_\alpha +  \sum_{\alpha, \beta} (-\lambda_\alpha \theta^\alpha_\beta e^\beta \otimes e_\alpha + \lambda_\alpha \theta_\alpha^\beta e^\alpha \otimes e_\beta) \\
&= (\bar{\partial} \lambda_\alpha) e^\alpha \otimes e_\alpha + \sum_{\alpha \ne \beta} (\lambda_\alpha -\lambda_\beta) \theta_\alpha^\beta e^\alpha \otimes e_\beta,
\end{align*}
which means precisely that the coefficients of $\bar{\partial}s$ are given by 
\[
(\bar{\partial}s)_\alpha^\beta = \begin{cases}
(\lambda_\alpha -\lambda_\beta) \theta_\alpha^\beta & \alpha \ne \beta  \\
\bar{\partial} \lambda_\alpha & \alpha = \beta
\end{cases}.
\] 
More generally, a similar computation shows that the coefficients of $\bar{\partial}(\varphi(s))$ are given by
\[
(\bar{\partial}(\varphi(s))_\alpha^\beta = \begin{cases}
(\varphi \circ \lambda_\alpha - \varphi\circ \lambda_\beta) \theta_\alpha^\beta & \alpha \ne \beta  \\
\bar{\partial} (\varphi \circ \lambda_\alpha ) & \alpha = \beta
\end{cases}.
\]  On the other hand, by definition, the endomorphism $d\varphi(s)(\bar{\partial}s)$ has coefficients
\begin{align*}
(d\varphi(s)(\bar{\partial}s))_\alpha^\beta &= d\varphi(\lambda_\alpha, \lambda_\beta) (\bar{\partial}s)_\alpha^\beta \\
&= \begin{cases}
\frac{\varphi\circ\lambda_\alpha - \varphi\circ \lambda_\beta}{\lambda_\alpha - \lambda_\beta} (\lambda_\alpha - \lambda_\beta)\theta_\alpha^\beta & \alpha \ne \beta  \\
\varphi'(\lambda_\alpha) \bar{\partial}\lambda_\alpha & \alpha = \beta
\end{cases} .
\end{align*}
Comparing coefficients, we find that the first part of the lemma follows.

For the second part about the trace, suppose that $\Phi :\mathbb{R} \times \mathbb{R} \to \mathbb{R}$ is any smooth function which agrees with $d\varphi$ along the diagonal. Then the trace of $\Phi(s)(\bar{\partial}s)$ is given by 
\begin{align*}
\text{Tr}(\Phi(s)(\bar{\partial}s)) &= \sum_{\alpha} \Phi(\lambda_\alpha, \lambda_\alpha) (\bar{\partial}s)_\alpha^\alpha \\
&= \sum_{\alpha} \varphi'(\lambda_\alpha) \bar{\partial}\lambda_\alpha \\
&= \text{Tr}(d\varphi(s)(\bar{\partial}s)),
\end{align*} 
as desired. 
\end{proof}

With these conventions, we see that formula \eqref{eqn:M_K} is equivalent to 
\begin{align}\label{eqn:M_Kinv}
M_K(Ke^s) = 2i \int_{\mathcal{X}} \text{Tr}(s\Lambda F_K) \frac{\omega^n}{n!} + 2 \int_{\mathcal{X}} \langle \Psi(s)(\bar{\partial} s), \bar{\partial} s \rangle_{K} \frac{\omega^n}{n!},
\end{align}
where $\Psi : \mathbb{R} \times \mathbb{R} \to \mathbb{R}$ is the function 
\[
\Psi(u,v) = \frac{e^{v - u} - (v - u) - 1}{(v - u)^2},
\]
which is extended continuously (and smoothly) along the diagonal by requiring that $\Psi(u,u) = 1/2.$

The construction $\Phi$ extends to $L^p$-spaces of endomorphisms in the following way. Because $\Phi$ is smooth, there is a positive constant $C$ depending on $\Phi$ such that we have the pointwise estimate
\[
|\Phi(s)(A)|_K \leqslant C |s|_K |A|_K
\]
for any endomorphism $A$ and self-adjoint endomorphism $s$. Given any $1 \leqslant p < q$, if $r$ is the number $1/p = 1/q + 1/r$, then H\"older's inequality implies that 
\[
\norm{|s|_K|A|_K}_{L^p} \leqslant \norm{s}_{L^r} \norm{A}_{L^q}.
\]
It follows that  for $1 \leqslant p < q$, given a self-adjoint endomorphism $s \in L^r(\text{End}(\mathcal{E}))$, the construction $A \mapsto \Phi(s)(A)$ describes a bounded linear operator 
\[
\Phi(s) : L^q(\text{End}(\mathcal{E})) \to L^p(\text{End}(\mathcal{E}))
\]
whose norm satisfies 
\[
\norm{\Phi(s)} \leqslant C \norm{s}_{L^r}.
\]
In this way,  we may think of $\Phi$ as a mapping 
\[
\Phi : L^r(\text{End}(\mathcal{E}, K)) \to \text{Hom}(L^q(\text{End}(\mathcal{E})) , L^p(\text{End}(\mathcal{E}))).
\]
Moreover, it also follows that if $s_k$ is a sequence that converges in the $L^r$-norm to $s_\infty$, then the sequence $\Phi(s_k)$ of operators converges in the operator norm to $\Phi(s_\infty)$. We summarize in the following proposition.

\begin{proposition}\label{prop:operatornorm}
For $1 \leqslant p < q$, the construction $\Phi$ describes a continuous mapping 
\[
\Phi : L^r(\textnormal{End}(\mathcal{E}, K)) \to \textnormal{Hom}(L^q(\textnormal{End}(\mathcal{E})) , L^p(\textnormal{End}(\mathcal{E}))),
\]
where $r$ is the number satisfying $1/p = 1/q + 1/r$. 
\end{proposition}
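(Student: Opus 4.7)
\smallskip

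\noindent\emph{Proof proposal.} The statement packages two assertions: that for each $s \in L^r$ the operator $\Phi(s) \colon L^q \to L^p$ is bounded with norm controlled linearly by $\norm{s}_{L^r}$, and that the assignment $s \mapsto \Phi(s)$ is continuous with respect to the operator-norm topology on the target. The plan is to reduce each to a pointwise inequality and apply H\"older on the exponent relation $1/p = 1/q + 1/r$, exactly as in the manipulations sketched in the paragraph immediately preceding the statement. Boundedness requires no new argument beyond what is already there: the pointwise bound $|\Phi(s)(A)|_K \leqslant C\,|s|_K |A|_K$ combines with $\norm{|s|_K |A|_K}_{L^p} \leqslant \norm{s}_{L^r} \norm{A}_{L^q}$ to give $\norm{\Phi(s)}_{\mathrm{op}} \leqslant C\norm{s}_{L^r}$.

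For continuity, I would upgrade the pointwise bound to a pointwise Lipschitz estimate
\[
|\Phi(s_1)(A) - \Phi(s_2)(A)|_K \leqslant L\bigl(|s_1|_K,\, |s_2|_K\bigr)\, |s_1 - s_2|_K\, |A|_K
\]
for a continuous $L$ that is locally bounded in its arguments, and then apply H\"older a second time to reach
\[
\norm{\Phi(s_1) - \Phi(s_2)}_{\mathrm{op}} \leqslant L'\bigl(\norm{s_1}_{L^r},\, \norm{s_2}_{L^r}\bigr)\, \norm{s_1 - s_2}_{L^r}.
\]
Such a local Lipschitz estimate on bounded subsets of $L^r$ yields continuity of $s \mapsto \Phi(s)$ on all of $L^r$, which is what is claimed.

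The main obstacle is the pointwise Lipschitz estimate itself. Because $s_1$ and $s_2$ need not commute, their unitary eigenframes can differ drastically even when $|s_1 - s_2|_K$ is small, so a naive comparison via eigenvalues fails. I would handle this by adopting a frame-free description of $\Phi(s)(A)$ --- for instance via a Daletskii-Krein double-operator-integral representation --- or more elementarily by first approximating $\Phi$ uniformly on any compact rectangle of $\mathbb{R}^2$ (containing the pointwise spectra of $s_1$ and $s_2$) by polynomials, for which matrix algebra supplies the Lipschitz bound directly, and then passing to the limit with bounds on the derivatives of $\Phi$.
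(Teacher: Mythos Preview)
Your boundedness argument is exactly the paper's: the paragraph immediately preceding the proposition records the pointwise bound $|\Phi(s)(A)|_K \leqslant C\,|s|_K |A|_K$ and applies H\"older to obtain $\norm{\Phi(s)}_{\mathrm{op}} \leqslant C\norm{s}_{L^r}$. For continuity the paper says only that ``it also follows'' that $s_k \to s_\infty$ in $L^r$ forces $\Phi(s_k) \to \Phi(s_\infty)$ in operator norm, with no further justification; your proposal to upgrade the pointwise bound to a pointwise Lipschitz estimate and run H\"older again is precisely the natural way to make that sentence honest, and your diagnosis of non-commutativity as the obstacle---together with the polynomial-approximation or double-operator-integral remedies---is correct and more careful than what the paper writes down.

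One wrinkle in your write-up: from a pointwise Lipschitz bound with a merely \emph{locally} bounded factor $L(|s_1|_K,|s_2|_K)$, H\"older does not directly produce an operator-norm estimate depending only on $\norm{s_i}_{L^r}$, because $L^r$ sections need not be pointwise bounded and so $L(|s_1|_K,|s_2|_K)$ is not controlled in $L^\infty$. To reach your displayed inequality $\norm{\Phi(s_1)-\Phi(s_2)}_{\mathrm{op}} \leqslant L'(\norm{s_1}_{L^r},\norm{s_2}_{L^r})\,\norm{s_1-s_2}_{L^r}$ you need either a globally bounded $L$ (i.e.\ $\Phi$ with bounded first derivatives) or $L^\infty$ control on the $s_i$. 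In the paper's only use of this proposition (inside the proof of Lemma~\ref{lem:essential}) both hypotheses are available---$\Phi$ is explicitly truncated to compact support, and the $u_k$ carry a uniform $C^0$ bound---so the issue is cosmetic rather than substantive, but your stated estimate needs one of these assumptions to hold as written.
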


\noindent \emph{Proof of Lemma \ref{lem:proper}}. Assuming the properness condition in Definition \ref{def:proper} is violated, we will construct explicitly a weakly holomorphic subbundle that is destabilizing. 

For a solution $H_t$ to the heat flow with initial condition $H_0 = K$, let us write $H_t = Ke^{s_t}$ for a path of self-adjoint endomorphisms $s_t$ with initial condition $s_0 = 0.$ 

We first claim that we have $\int_{\mathcal{X}} \text{Tr}(s_t) \omega^n = 0$ along the path $t \mapsto s_t$. Indeed, the heat equation \eqref{eqn:heat} implies that 
\[
\dot{s}_t = -\frac{i}{2} (\Lambda F_t - \lambda I_\mathcal{E}).
\] 
Upon taking the trace and integrating over $\mathcal{X}$, we find that the right-hand side vanishes, and so the quantity $\int_{\mathcal{X}} \text{Tr}(s_t) \omega^n$ must be constant. The initial condition $s_0 = 0$ implies that this constant must be zero, as desired.

Now assume the properness condition of Definition \ref{def:proper} is violated. By Lemma \ref{lem:C^0control}, we can find $s_t$ contradicting the estimate with $\norm{|s_t|_K^2}_{L^2}$ arbitrarily large, or else the resulting bound on the $C^0$-norm would make the estimate of Definition \ref{def:proper} hold trivially after adjusting $C_1$. We thus have a sequence of times $t_k$ and corresponding self-adjoint endomorphisms $s_k$ whose $L^2$-norms $\norm{|s_k|^2}_{L^2}$ tend to $\infty$ and which satisfy 
\begin{align}\label{uncoerc}
\norm{|s_k|^2}_{L^2}^{1/2} \geqslant k M_K(Ke^{s_k}). 
\end{align}

Let us define a sequence of normalized endomorphisms $u_k = \ell_k^{-1} s_k$, where $\ell_k$ is the number 
\[
\ell_k = \norm{|s_k|^2}_{L^2}^{1/2}.
\] 
Note that the $u_k$ are indeed normalized in the sense that $\norm{|u_k|^2}_{L^2}^{1/2} = 1$. The uniform estimate of Lemma \ref{lem:C^0control} implies that 
\[
\ell_k \sup_{\mathcal{X}}|u_k| \leqslant C_1 + C_2 \ell_k \norm{|u_k|^2}_{L^2}^{1/2},
\]
and so we obtain a uniform $C^0$-bound on the sequence $u_k$. 

We now prove the following useful lemma. 

\begin{lemma}\label{lem:essential}
After passing to a subsequence, we may assume that the sequence $u_k$ converges to $u_\infty$ weakly in $L_1^2$. If $\Phi : \mathbb{R} \times \mathbb{R} \to \mathbb{R}$ is a positive smooth function satisfying $\Phi(u,v) < (u - v)^{-1}$ whenever $u > v$, then 
\[
i \int_{\mathcal{X}} \textnormal{Tr}(u_\infty \Lambda F_K) \frac{\omega^n}{n!} + \int_{\mathcal{X}} \langle \Phi(u_\infty)(\bar{\partial} u_\infty), \bar{\partial} u_\infty\rangle_K \frac{\omega^n}{n!} \leqslant 0,
\]
where $\Phi(u_\infty)(\bar{\partial} u_\infty)$ is the endomormorphism of $\mathcal{E}$ constructed as in Notation \ref{notation1}. 
\end{lemma}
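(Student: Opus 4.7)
The plan is to use formula \eqref{eqn:M_Kinv} applied to $s_k = \ell_k u_k$, rescale by $\ell_k$, and pass to the limit carefully, exploiting a monotonicity property of the integrand.

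First I would upgrade the $C^0$-bound on $u_k$ to a uniform $L_1^2$-bound by combining the pointwise Siu estimate $(e^y-y-1)/y^2 \geq 1/(2\sqrt{y^2+1})$ used in Corollary~\ref{cor:M_Kest} with the observation $|\ell_k(\mu_\beta-\mu_\alpha)| \leq C\ell_k$ on eigenvalue differences of $s_k$ (which follows from the $C^0$-bound on $u_k$). This yields a pointwise lower bound
\[
\langle \Psi(s_k)(\bar\partial s_k), \bar\partial s_k\rangle_K \;\geq\; \frac{c}{\ell_k}\, |\bar\partial s_k|_K^2 \;=\; c\,\ell_k\,|\bar\partial u_k|_K^2,
\]
which, once integrated and inserted in \eqref{eqn:M_Kinv}, gives $\int |\bar\partial u_k|_K^2 \leq C$ thanks to \eqref{uncoerc} and the Hölder estimate $\|s_k\|_{L^1} \leq C \ell_k$ (recall $\|s_k\|_{L^4}=\ell_k$). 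Rellich--Kondrachov then produces a subsequence, still labelled $u_k$, converging weakly in $L_1^2$, strongly in every $L^p$ with $p<\infty$, and pointwise a.e.\ to some $u_\infty$.

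Next, substituting $s_k = \ell_k u_k$ into \eqref{eqn:M_Kinv} and dividing by $\ell_k$ produces the rescaled identity
\[
\frac{M_K(Ke^{s_k})}{\ell_k} \;=\; 2i \int_{\mathcal X} \textnormal{Tr}(u_k \Lambda F_K)\,\tfrac{\omega^n}{n!} \;+\; 2 \int_{\mathcal X} \langle \ell_k\Psi_k(u_k)(\bar\partial u_k),\bar\partial u_k\rangle_K\,\tfrac{\omega^n}{n!},
\]
where $\Psi_k(u,v) := \Psi(\ell_k u, \ell_k v)$. By \eqref{uncoerc} the left-hand side is $O(1/k)$, and the first integral on the right tends to $2i\int \textnormal{Tr}(u_\infty \Lambda F_K)(\omega^n/n!)$ by strong $L^2$ convergence. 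The key observation is that $\ell \mapsto \ell\Psi(\ell w)$ is monotone non-decreasing in $\ell>0$ for each fixed $w\in\mathbb{R}$ (indeed $\tfrac{d}{d\ell}[\ell\Psi(\ell w)] = \tfrac{d}{dy}[y\Psi(y)]|_{y=\ell w}$, whose numerator $(y-1)e^y+1$ is positive for $y\ne0$), with pointwise limit $1/(u-v)$ when $u>v$ and $+\infty$ otherwise. Hence $\ell_k\Psi_k \geq \ell_{k_0}\Psi_{k_0}$ pointwise whenever $k \geq k_0$, and positivity of the induced quadratic form gives
\[
\int \langle \ell_k\Psi_k(u_k) \bar\partial u_k, \bar\partial u_k\rangle \;\geq\; \int \langle \ell_{k_0}\Psi_{k_0}(u_k)\bar\partial u_k, \bar\partial u_k\rangle.
\]

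The final step is a standard lower-semi-continuity argument. For fixed $k_0$, the operator $\ell_{k_0}\Psi_{k_0}(u_k)$ is uniformly bounded in $L^\infty$ (as $\Psi_{k_0}$ is smooth and $u_k$ is $C^0$-bounded) and converges a.e.\ to $\ell_{k_0}\Psi_{k_0}(u_\infty)$ by continuity of the functional calculus of Notation~\ref{notation1}; combined with the weak $L^2$ convergence $\bar\partial u_k\rightharpoonup\bar\partial u_\infty$, expanding $\langle A_k(\bar\partial u_k-\eta),\bar\partial u_k-\eta\rangle \geq 0$ with $\eta = \bar\partial u_\infty$ (and using dominated convergence for $A_k\eta\to A\eta$ in $L^2$) yields
\[
\liminf_{k\to\infty}\int \langle \ell_{k_0}\Psi_{k_0}(u_k)\bar\partial u_k,\bar\partial u_k\rangle \;\geq\; \int \langle \ell_{k_0}\Psi_{k_0}(u_\infty)\bar\partial u_\infty,\bar\partial u_\infty\rangle.
\]
Letting $k\to\infty$ in the rescaled identity and then $k_0\to\infty$ (by monotone convergence applied eigenvalue-wise, using that $\Phi(u,v) < 1/(u-v) = \lim_{k_0}\ell_{k_0}\Psi_{k_0}(u,v)$ for $u>v$ and $\Phi(u,v) < +\infty = \lim_{k_0}\ell_{k_0}\Psi_{k_0}(u,v)$ for $u\leq v$) delivers
\[
0 \;\geq\; 2i\int \textnormal{Tr}(u_\infty \Lambda F_K)\tfrac{\omega^n}{n!} + 2\int \langle \Phi(u_\infty)\bar\partial u_\infty,\bar\partial u_\infty\rangle\tfrac{\omega^n}{n!},
\]
which is the lemma after dividing by $2$. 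The most delicate point I anticipate is the lower-semi-continuity step: one must combine weak $L^2$ convergence of $\bar\partial u_k$ with merely a.e.\ (not operator-norm) convergence of the nonlinear quantity $\Psi_{k_0}(u_k)$, and the closing monotone-convergence argument requires care in exploiting the strict inequality $\Phi<\Psi_\infty$ along the diagonal $\{u=v\}$, where $\Psi_\infty=+\infty$.
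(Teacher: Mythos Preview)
Your argument is correct and follows the same overall strategy as the paper (rescale \eqref{eqn:M_Kinv} by $\ell_k$, exploit the monotonicity of $\ell\mapsto\ell\Psi(\ell\,\cdot\,)$, pass to the limit), but the passage to the limit is organized differently. The paper first replaces $\ell_k\Psi(\ell_k\,\cdot\,)$ by $\Phi$ \emph{on the approximants} $u_k$ (using that the eigenvalues of $u_k$ live in a fixed compact set, so $\Phi<\ell\Psi(\ell\,\cdot\,)$ for $\ell$ large there), obtaining the fixed-$\Phi$ inequality \eqref{eqn:essential} for $u_k$; it then pushes this to $u_\infty$ via a somewhat delicate $\epsilon$-argument using the continuity of the functional calculus $\Phi^{1/2}:L^r\to\textnormal{Hom}(L^2,L^p)$ (Proposition~\ref{prop:operatornorm}) and letting $p\nearrow 2$. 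You instead freeze the quadratic form at level $k_0$, pass to the weak limit in $k$ using the standard ``expand the non-negative square $\langle A_k(\bar\partial u_k-\bar\partial u_\infty),\bar\partial u_k-\bar\partial u_\infty\rangle\ge 0$'' lower-semi-continuity trick (which needs only $A_k\to A$ a.e.\ with uniform $L^\infty$ bound together with $\bar\partial u_k\rightharpoonup\bar\partial u_\infty$ in $L^2$), and finally send $k_0\to\infty$ by monotone convergence before comparing with $\Phi$. Your route sidesteps Proposition~\ref{prop:operatornorm} and the $L^p\to L^2$ limiting entirely, at the cost of the extra $k_0$ layer. One small cosmetic point: your appeal to ``Rellich--Kondrachov'' for strong $L^p$ convergence with $p<\infty$ really uses the uniform $C^0$ bound interpolated against strong $L^2$ convergence (exactly as the paper does), not the Sobolev embedding alone; and the final monotone-convergence step goes through cleanly because the pointwise integrand $\langle\ell_{k_0}\Psi_{k_0}(u_\infty)\bar\partial u_\infty,\bar\partial u_\infty\rangle$ is a non-negative sum over eigenvalue pairs, each term monotone in $k_0$, so the pointwise limit dominates $\langle\Phi(u_\infty)\bar\partial u_\infty,\bar\partial u_\infty\rangle$ as you say.
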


\noindent \emph{Proof of Lemma \ref{lem:essential}}. Condition \eqref{uncoerc} can be written as 
\[
2i \ell_k  \int_{\mathcal{X}} \text{Tr}(u_k\Lambda F_K) \frac{\omega^n}{n!} + 2 \ell_k^2  \int_{\mathcal{X}} \langle \Psi(\ell_k u_k)(\bar{\partial} u_k), \bar{\partial}u_k \rangle_K  \frac{\omega^n}{n!} \leqslant \frac{1}{k}\ell_k.
\]
As $\ell \to \infty$, the expression 
\[
\ell\Psi(\ell u, \ell v) =  \frac{\ell e^{\ell (v - u)} - \ell^2(v - u) - \ell}{\ell^2 (v - u)^2} 
\] 
increases monotonically to $(u - v)^{-1}$ for $u > v$ and to $\infty$ for $u \leqslant v$. 

Fix $\Phi$ as in the statement of the lemma. Because the construction $\Phi(u_k)$ depends only on the eigenvalues of $u_k$ and these are bounded uniformly in $k$ (by the $C^0$-bound on the sequence), we may assume that $\Phi$ is compactly supported. Then the assumption on $\Phi$ guarantees that $\Phi(u,v) < \ell \Psi(\ell u, \ell v)$ for $\ell$ sufficiently large. It follows from the previous paragraph that for $k$ sufficiently large, we have
\begin{align}\label{eqn:essential}
i  \int_{\mathcal{X}} \text{Tr}(u_k\Lambda F_K) \frac{\omega^n}{n!} +  \int_{\mathcal{X}} \langle \Phi(u_k)(\bar{\partial}u_k), \bar{\partial} u_k\rangle_K  \frac{\omega^n}{n!} \leqslant \frac{1}{2k}.
\end{align}
The $C^0$-bound on the sequence $u_k$ implies that the operator norms of $\Phi(u_k)$ are bounded uniformly, and hence we obtain from this inequality a uniform bound on $\norm{\bar{\partial}u_k}_{L^2}$. Therefore we may choose a subsequence so that $u_k \to u_\infty$ weakly in $L_1^2$ (and strongly in $L^2$). 

Moreover, the fact that we have a uniform $C^0$-bound on the sequence $u_k$ implies that the sequence $u_k$ converges to $u_\infty$ in $L^r$ for any $r > 2$. Indeed, let us write $b$ for a uniform $C^0$-bound for the sequence $u_k$. Then we compute that 
\begin{align*}
\norm{u_k - u_j}_{L^r}^r &= \int_{\mathcal{X}}|u_k - u_j|_K^r \frac{\omega^n}{n!} \\
&= \int_{\mathcal{X}} |u_k - u_j|^{r-2}_K|u_k - u_j|_K^{2}  \frac{\omega^n}{n!} \\
&\leqslant (2b)^{r-2} \int_{\mathcal{X}}|u_k - u_j|_K^{2} \frac{\omega^n}{n!} \\
&= (2b)^{r-2} \norm{u_k - u_j}_{L^2}^2.
\end{align*}
This estimate implies that if the sequence $u_k$ is Cauchy in $L^2$ then it is also Cauchy in $L^r$ for $r > 2$.

The proof of this lemma would be complete if we knew we could take a limit of the inequality \eqref{eqn:essential} as $k \to \infty$. We can do so for the following reasons. Let $\epsilon > 0$ be arbitrary. Notice that
\begin{align*}
\norm{\Phi^{1/2}(u_k)(\bar{\partial} u_k)}_{L^2}^2 = \int_{\mathcal{X}} \langle \Phi(u_k)(\bar{\partial}u_k), \bar{\partial} u_k\rangle_K  \frac{\omega^n}{n!}.
\end{align*}
The mapping
\[
u \mapsto i  \int_{\mathcal{X}} \text{Tr}(u\Lambda F_K) \frac{\omega^n}{n!}
\] 
is continuous for $u \in L^2$, so the inequality \eqref{eqn:essential} implies that for $k$ sufficiently large we have
\[
i \int_{\mathcal{X}} \text{Tr}(u_\infty \Lambda F_K) \frac{\omega^n}{n!} + \norm{\Phi^{1/2}(u_k)(\bar{\partial} u_k)}_{L^2}^2 \leqslant \epsilon.
\] 
This estimate implies in particular that the sequence of numbers $\norm{\Phi^{1/2}(u_k)(\bar{\partial} u_k)}_{L^2}^2$ is bounded uniformly. 
Let $p$ be a number satisfying $1 < p < 2$, and let $r$ be the positive number such that $1/p = 1/2 + 1/r$. The inequality $1 < p$ implies that $r > 2$.  By the previous paragraph, because $r > 2$, the sequence $u_k$ converges in $L^r$. It follows from Proposition \ref{prop:operatornorm} that the sequence of operators $\Phi^{1/2}(u_k)$ converges to $\Phi^{1/2}(u_\infty)$ in the space $\textnormal{Hom}(L^2(\textnormal{End}(\mathcal{E})) , L^p(\textnormal{End}(\mathcal{E}))$.   The sequence $\bar{\partial}u_k$ is bounded in $L^2$, so we may appeal to Proposition \ref{prop:operatornorm} to find that $\Phi^{1/2}(u_k)(\bar{\partial}u_j) \to \Phi^{1/2}(u_\infty)(\bar{\partial} u_j)$ for fixed $j$ as $k \to \infty$ in $L^p$. This means that for $k$ sufficiently large we have 
\[
\norm{\Phi^{1/2}(u_\infty)(\bar{\partial} u_j)}_{L^p}^2 \leqslant \norm{\Phi^{1/2}(u_k)(\bar{\partial} u_j)}_{L^p}^2 + \epsilon,
\]
where this estimate is independent of $j$ because the sequence $\bar{\partial}u_j$ is bounded uniformly in $L^2$. 
In addition, the sequence $\Phi^{1/2}(u_\infty)(\bar{\partial} u_j)$ converges to $\Phi^{1/2}(u_\infty)(\bar{\partial} u_\infty)$ weakly in $L^p$, so by the lower semicontinuity of the norm, we find that for $j,k$ sufficiently large, we have 
\begin{align*}
\norm{\Phi^{1/2}(u_\infty)(\bar{\partial} u_\infty)}_{L^p}^2 &\leqslant \norm{\Phi^{1/2}(u_\infty)(\bar{\partial} u_j)}_{L^p}^p + \epsilon \\
 &\leqslant \norm{\Phi^{1/p}(u_k)(\bar{\partial} u_j)}_{L^p}^2 + 2\epsilon.
\end{align*}
Moreover, we have an estimate of the form 
\[
\norm{f}_{L^p} \leqslant (\text{vol}(\mathcal{X}))^{1/r} \norm{f}_{L^2}
\]
for $f \in L^2$. 
As $p \to 2$, we have $r \to \infty$, so by choosing $p$ sufficiently close to $2$, we may ensure that $(\text{vol}(\mathcal{X}))^{1/r}$ is sufficiently close to $1$. Because the sequence of numbers $\norm{\Phi^{1/2}(u_k)(\bar{\partial} u_k)}_{L^2}^2$ is bounded uniformly, we may now ensure that by taking $p$ close enough to $2$ that we have a uniform estimate of the form 
\[
\norm{\Phi^{1/2}(u_k)(\bar{\partial} u_k)}_{L^p}^{2} \leqslant \norm{\Phi^{1/2}(u_k)(\bar{\partial} u_k)}_{L^2}^2 + \epsilon.
\]
Collecting all of the above, what we have shown therefore is that for $k$ sufficiently large and for $p$ sufficiently close to $2$, we have 
\begin{align*}
&i \int_{\mathcal{X}} \text{Tr}(u_\infty \Lambda F_K) \frac{\omega^n}{n!} + \norm{\Phi^{1/2}(u_\infty)(\bar{\partial} u_\infty)}_{L^p}^2  \\
&\leqslant i \int_{\mathcal{X}} \text{Tr}(u_\infty \Lambda F_K) \frac{\omega^n}{n!} + \norm{\Phi^{1/2}(u_k)(\bar{\partial} u_k)}_{L^p}^2 + 2\epsilon \\
&\leqslant i \int_{\mathcal{X}} \text{Tr}(u_\infty \Lambda F_K) \frac{\omega^n}{n!} + \norm{\Phi^{1/2}(u_k)(\bar{\partial} u_k)}_{L^2}^2 + 3\epsilon \\
&\leqslant  4 \epsilon.
\end{align*}
If a measurable function satisfies an inequality involving the $L^p$-norm uniformly for $p < 2$, then it satisfies the same inequality involving the $L^2$-norm. Because $\epsilon > 0$ was arbitrary, the lemma now follows. \hfill $\Box$

\medskip

We also claim the limit $u_\infty$ is nontrivial. Indeed because the sequence $u_k$ converges to $u_\infty$ in $L^2$, we find that the sequence also converges in $L^4$ by the ideas in the proof of the lemma. But we have $\norm{u_k}_{L^4}^{4} = \norm{|u_k|_K^2}_{L^2}^{2} = 1$. We therefore find that $\norm{u_\infty}_{L^4} = 1$, and $u_\infty$ is nontrivial.

With the lemma, it is possible to see that the eigenvalues of $u_\infty$ are constant. To demonstrate this, we argue that if $\varphi : \mathbb{R} \to \mathbb{R}$ is any smooth function, then the function $\text{Tr}(\varphi(u_\infty))$ is constant. (Here we are using Notation \ref{notation1}.) To prove that this function is constant, we will consider its derivative $\bar{\partial} \text{Tr}(\varphi(u_\infty))$. If $d\varphi : \mathbb{R} \times \mathbb{R} \to \mathbb{R}$ denotes the difference quotient of $\varphi$ as in Lemma \ref{lem:chainrule}, then we have $\bar{\partial} \text{Tr}(\varphi(u_\infty)) = \text{Tr}(d\varphi(u_\infty)(\bar{\partial}u_\infty))$. Let $N$ be a large number. Choose $\Phi : \mathbb{R} \times \mathbb{R} \to \mathbb{R}$ which agrees with $d\varphi$ along the diagonal in the sense that $\Phi(u,u) = d\varphi(u,u) = \varphi'(u)$ and also ensure $\Phi$ satisfies 
\[
N \Phi^2(u,v) < (u-v)^{-1}
\]
for $u < v$. Then by Lemma \ref{lem:chainrule}, we have 
\[
\bar{\partial} \text{Tr}(\varphi(u_\infty)) = \text{Tr}(d\varphi(u_\infty)(\bar{\partial}u_\infty)) = \text{Tr}(\Phi(u_\infty)(\bar{\partial} u_\infty)),
\]
and using Lemma \ref{lem:essential}, we find that 
\[
i  \int_{\mathcal{X}} \text{Tr}(u_\infty \Lambda F_K)\frac{\omega^n}{n!} + N \int_{\mathcal{X}} \langle \Phi^2(u_\infty)(\bar{\partial}u_\infty), \bar{\partial}u_\infty \rangle_K \frac{\omega^n}{n!} \leqslant 0,
\]
that is,
\[
\int_{\mathcal{X}} |\Phi(u_\infty)(\bar{\partial} u_\infty)|_K^2 \frac{\omega^n}{n!} \leqslant -\frac{i}{N} \int_{\mathcal{X}} \text{Tr}(u_\infty \Lambda F_K) \frac{\omega^n}{n!}.
\]
The Schwarz inequality implies that 
\[
\text{Tr}(\Phi(u_\infty)(\bar{\partial}u_\infty)) = \langle \Phi(u_\infty)(\bar{\partial}u_\infty), I_{\mathcal{E}} \rangle_K \leqslant r^2 |\Phi(u_\infty)(\bar{\partial}u_\infty)|_K^2,
\]
from which we obtain 
\[
\norm{\bar{\partial} \text{Tr}(\varphi(u_\infty))}_{L^1} = \norm{\text{Tr}(\Phi(u_\infty)(\bar{\partial}u_\infty))}_{L^1} \leqslant -\frac{r^2i}{N}\int_{\mathcal{X}} \text{Tr}(u_\infty \Lambda F_K) \frac{\omega^n}{n!}.
\] 
We conclude that 
\[
\norm{\bar{\partial} \text{Tr}(\varphi(u_\infty))}_{L^1} \leqslant \frac{C}{N}.
\]
The fact that $N$ was arbitrary implies that $\bar{\partial}\text{Tr}(\varphi(u_\infty)) = 0$. Because the function $\text{Tr}(\varphi(u_\infty))$ is real, we conclude that it must be a constant, as desired. 

If $\nu_1 \leqslant \cdots \leqslant \nu_r$ denote the eigenvalues of $u_\infty$ (which are constant almost everywhere), then we claim that not all $\nu_\alpha$ are equal. Indeed because each $s_k$ satisfies $\int_{\mathcal{X}} \text{Tr}(s_k) \omega^n = 0$, we find also that $\int_{\mathcal{X}} \text{Tr}(u_k)\omega^n = 0$, and hence we have $\int_{\mathcal{X}} \text{Tr}(u_\infty) \omega^n = 0$ as well. But $u_\infty$ is nontrivial, and so at least one eigenvalue must be nonzero. 

It follows that the eigenspaces of $u_\infty$ give rise to a nontrivial flag of $L_1^2$-subbundles of $\mathcal{E}$ which we denote by 
\[
0 \subset \pi_1 \subset \cdots \subset \pi_r = I_\mathcal{E},
\]
where $\pi_\alpha$ denotes projection onto the sum of the first $\alpha$ eigenspaces of $u_\infty$. Note that by construction the $\pi_\alpha$ are self-adjoint with respect to $K$ and satisfy $\pi_\alpha^2 = \pi_\alpha$. 

We claim that each $\pi_\alpha$ represents a weakly holomorphic subbundle of $\mathcal{E}$ in the sense of Definition \ref{def:weak}. For this, it remains only to check that $(I_{\mathcal{E}}-\pi_\alpha)\bar{\partial}\pi_\alpha = 0$. We will use Notation \ref{notation1} to write $\pi_\alpha$ as $\pi_\alpha = p_\alpha(u_\infty)$ where $p_\alpha : \mathbb{R} \to \mathbb{R}$ is a smooth real-valued function satisfying 
\[
p_\alpha(\nu_\beta) = \begin{cases} 
1 & \beta \leqslant \alpha \\
0 &  \beta > \alpha 
\end{cases},
\] 
from which it follows that $\bar{\partial}\pi_\alpha = dp_\alpha(u_\infty)(\bar{\partial}u_\infty)$ by Lemma \ref{lem:chainrule}.

If we set $\Phi_\alpha : \mathbb{R} \times \mathbb{R} \to \mathbb{R}$ to be 
\[
\Phi_\alpha(u,v) = (1-p_\alpha)(v)dp_\alpha(u,v)
\]
where $1$ denotes the constant $1$ function, then 
then we claim that  
\begin{align}\label{eqn:weakPhi}
(I_{\mathcal{E}} - \pi_\alpha)\bar{\partial} \pi_\alpha = \Phi_\alpha(u_\infty)(\bar{\partial}u_\infty).  
\end{align}
Indeed let $e_\beta$ be a unitary basis for $\mathcal{E}$ with respect to which a local expression for $u_\infty$ is 
\[
u_\infty = \sum_{\beta} \nu_\beta e^\beta \otimes e_\beta.
\]
Reasoning in Lemma \ref{lem:chainrule} shows that 
\[
(\bar{\partial}u_\infty)_\beta^\gamma = \begin{cases}
(\nu_\beta - \nu_\gamma)\theta_\beta^\gamma & \beta \neq \gamma \\
0 & \beta = \gamma
\end{cases}
\]
where $\theta_\beta^\gamma$ is the matrix of $\bar{\partial}$. We then compute that the coefficients of $\Phi_\alpha(u_\infty)(\bar{\partial}u_\infty)$ are given by 
\begin{align*}
(\Phi_\alpha(u_\infty)(\bar{\partial}u_\infty))_\beta^\gamma  &= \begin{cases}
(1 - p_\alpha)(\nu_\gamma) dp_\alpha(\nu_\beta, \nu_\gamma)(\nu_\beta - \nu_\gamma)\theta_\beta^\gamma  & \beta \neq \gamma \\
0 & \beta = \gamma
\end{cases} \\
&= \begin{cases}
(1 - p_\alpha(\nu_\gamma)) (p_\alpha(\nu_\beta) - p_\alpha(\nu_\gamma))\theta_\beta^\gamma  & \beta \neq \gamma \\
0 & \beta = \gamma
\end{cases} \\
&= \begin{cases}
(p_\alpha(\nu_\beta) - p_\alpha(\nu_\gamma))\theta_\beta^\gamma  & \beta \neq \gamma, \gamma > \alpha \\
0 & \beta = \gamma \; \text{or} \; \gamma \leqslant \alpha 
\end{cases}.
\end{align*}
On the other hand, the previous paragraph implies that the coefficients of $\bar{\partial}\pi_\alpha$ are given by 
\begin{align*}
(\bar{\partial}\pi_\alpha)_\beta^\mu &= \begin{cases}
(p_\alpha(\nu_\beta) - p_\alpha(\nu_\mu)) \theta_\beta^\mu & \beta \neq \mu \\
0 & \beta = \mu
\end{cases}
\end{align*}
and also the coefficients of $I_\mathcal{E} - \pi_\alpha$ are given by 
\[
(I_\mathcal{E} - \pi_\alpha)_{\mu}^\gamma = \begin{cases}
\delta_\mu^\gamma & \gamma > \alpha \\
0 & \gamma \leqslant \alpha
\end{cases}.
\]
The composition $(I_\mathcal{E} - \pi_\alpha) \bar{\partial}\pi_\alpha$ therefore has coefficients 
\begin{align*}
((I_\mathcal{E} - \pi_\alpha) \bar{\partial}\pi_\alpha)_\beta^\gamma &= (I_\mathcal{E} - \pi_\alpha)_\mu^\gamma (\bar{\partial}\pi_\alpha)_\beta^\mu \\
&= \begin{cases}
(\bar{\partial} \pi_\alpha)_{\beta}^\gamma &\gamma > \alpha \\
0 & \gamma \leqslant \alpha 
\end{cases}.
\end{align*}
Comparing with the coefficients for $\Phi_\alpha(u_\infty)(\bar{\partial}u_\infty)$ we find the relation \eqref{eqn:weakPhi} is indeed true.

We next claim that for $\nu_\gamma > \nu_\beta$, we have $\Phi_\alpha(\nu_\gamma, \nu_\beta) = 0$. There are two possibilities for $\nu_\beta$: either $\nu_\beta \leqslant \nu_\alpha$ or $\nu_\beta > \nu_\alpha$.  If $\nu_\beta \leqslant \nu_\alpha$, then $p_\alpha(\nu_\beta) = 1$ and so $\Phi_\alpha(\nu_\gamma, \nu_\beta) = 0$ by definition. On the other hand, if $\nu_\beta > \nu_\alpha$, then for $\nu_\gamma > \nu_\beta \geqslant \nu_\alpha$, we have that each $p_\alpha(\nu_\gamma) = p_\alpha(\nu_\beta) = 0$, and so the difference quotient $dp(\nu_\gamma, \nu_\beta)$ vanishes. The claim now follows. 

Because the eigenvalues of $u_\infty$ are constant almost everywhere, the construction $\Phi_\alpha(u_\infty)$ depends only on the values of $\Phi_\alpha : \mathbb{R} \times \mathbb{R} \to \mathbb{R}$ on the pairs of eigenvalues $(\nu_\beta, \nu_\gamma)$.  So by replacing $\Phi_\alpha$ with $\Phi_\alpha^N$ satisfying $\Phi_\alpha^N(\nu_\gamma, \nu_\beta) = \Phi_\alpha(\nu_\gamma, \nu_\beta)$ and 
\[
N(\Phi_\alpha^N)^2(u,v) < (u-v)^{-1} \hspace{5mm} \text{for $u > v$},
\]
then we find that still we have 
\[
(I_{\mathcal{E}} - \pi_\alpha)\bar{\partial} \pi_\alpha = \Phi_\alpha^N(u_\infty)(\bar{\partial}u_\infty),
\]
but now we have guaranteed in addition that $\norm{\Phi_\alpha^N(u_\infty)(\bar{\partial} u_\infty)}_{L^2}^2 \leqslant C/N$ by following the line of reasoning from earlier in the argument. Because $N$ is arbitrary, we can conclude that $\Phi_\alpha^N(u_\infty)(\bar{\partial} u_\infty) = 0$. This therefore completes the proof that $\pi_\alpha$ is a weakly holomorphic subbundle. 

We finally show that at least one of the $\pi_\alpha$ for $\alpha < r$ is destabilizing. In a telescoping manner we may write 
\[
u_\infty = \nu_r I_{\mathcal{E}} - \sum_{\alpha = 1}^{r-1} (\nu_{\alpha+1} - \nu_{\alpha}) \pi_\alpha.
\]
Then, according to Definition \ref{def:weak}, the following sum of degrees is given by 
\begin{align*}
W &= \nu_r \deg(E) - \sum_\alpha (\nu_{\alpha+1} - \nu_\alpha) \deg(\pi_\alpha) \\
&= \nu_r \frac{i}{2\pi} \int_{\mathcal{X}} \text{Tr}(\Lambda F_K) - \sum_\alpha (\nu_{\alpha + 1} - \nu_\alpha) \left(\frac{i}{2\pi} \int_{\mathcal{X}} \text{Tr}(\pi_\alpha \Lambda F_K) - \frac{1}{2\pi}\int_{\mathcal{X}} |\bar{\partial} \pi_\alpha|_K^2\right) \\
&= \frac{i}{2\pi} \int_{\mathcal{X}} \text{Tr}(u_\infty \Lambda F_K) + \frac{1}{2\pi}\int_{\mathcal{X}} \sum_\alpha (\nu_{\alpha +1} - \nu_\alpha) |\bar{\partial} \pi_\alpha|_K^2.
\end{align*}
Because $\bar{\partial} \pi_\alpha = p_\alpha(u_\infty)(\bar{\partial} u_\infty)$, we obtain 
\[
W = \frac{i}{2\pi} \int_{\mathcal{X}} \text{Tr}(u_\infty \Lambda F_K) + \frac{1}{2\pi}\int_{\mathcal{X}} \sum_\alpha (\nu_{\alpha +1} - \nu_\alpha) \langle (dp_\alpha)^2(u_\infty)(\bar{\partial} u_\infty), \bar{\partial} u_\infty \rangle_K.
\]
For fixed $\nu_\beta > \nu_\gamma$, if $\nu_\alpha$ satisfies $\nu_\beta > \nu_\alpha \geqslant \nu_\gamma$, then $dp_\alpha(\nu_\beta, \nu_\gamma)^2 = (\nu_\beta - \nu_\gamma)^{-2}$, and vanishes otherwise. It follows that for $\nu_\beta > \nu_\gamma$, the (telescoping) sum satisfies 
\[
\sum_{\alpha} (\nu_{\alpha + 1} - \nu_\alpha) (dp_\alpha)^2(\nu_\gamma, \nu_\beta) = \frac{\nu_\beta - \nu_\gamma}{(\nu_\beta - \nu_\gamma)^{2}} = \frac{1}{\nu_\beta - \nu_\gamma}.
\]
Lemma \ref{lem:essential} implies that $W\leqslant 0$, which means that 
\begin{align}\label{ineq:deg}
\nu_r \deg(\mathcal{E}) \leqslant  \sum_\alpha (\nu_{\alpha + 1} - \nu_\alpha) \deg(\pi_\alpha). 
\end{align}
On the other hand, the trace of $u_\infty$ is zero, which means that 
\[
\nu_r \text{rk}(\mathcal{E}) = \sum_{\alpha}(\nu_{\alpha + 1} - \nu_\alpha) \text{Tr}(\pi_\alpha).
\]
If each $\deg(\pi_\alpha)$ satisfied $\deg(\pi_\alpha) < \text{Tr}(\pi_\alpha) (\deg(\mathcal{E})/\text{rk}(\mathcal{E}))$, then we would have 
\begin{align*}
\sum_{\alpha}(\nu_{\alpha +1} - \nu_\alpha) \deg(\pi_\alpha) < \frac{\deg(\mathcal{E})}{\text{rk}(\mathcal{E})}\sum_\alpha(\nu_{\alpha + 1} - \nu_\alpha) \text{Tr}(\pi_\alpha)  = \nu_r \deg(\mathcal{E}),
\end{align*}
which contradicts \eqref{ineq:deg}. It follows that at least one $\pi_\alpha$ has $\mu(\pi_\alpha) \geqslant \mu(\mathcal{E})$.

This completes the proof of Lemma \ref{lem:proper}. \hfill $\Box$

\newpage

\bibliographystyle{abbrv}
\bibliography{HEmetrics}

\begin{thebibliography}{10}

\bibitem{aubinsome}
T.~Aubin.
\newblock {\em Some nonlinear problems in Riemannian geometry}.
\newblock Springer Science \& Business Media, 2013.

\bibitem{cr}
W.~Chen and Y.~Ruan.
\newblock Orbifold {G}romov-{W}itten theory.
\newblock {\em arXiv preprint math/0103156}, 2001.

\bibitem{donaldson}
S.~K. Donaldson.
\newblock Anti self-dual {Y}ang-{M}ills connections over complex algebraic
  surfaces and stable vector bundles.
\newblock {\em Proceedings of the London Mathematical Society}, 3(1):1--26,
  1985.

\bibitem{donaldson2}
S.~K. Donaldson et~al.
\newblock Infinite determinants, stable bundles and curvature.
\newblock {\em Duke Mathematical Journal}, 54(1):231--247, 1987.

\bibitem{eells}
J.~Eells and J.~H. Sampson.
\newblock Harmonic mappings of {R}iemannian manifolds.
\newblock {\em American journal of mathematics}, 86(1):109--160, 1964.

\bibitem{es}
P.~Eyssidieux and F.~Sala.
\newblock Instantons and framed sheaves on {K}\"ahler {D}eligne-{M}umford
  stacks.
\newblock {\em arXiv preprint arXiv:1404.3504}, 2014.

\bibitem{faulk1}
M.~Faulk.
\newblock On {Y}au's theorem for effective orbifolds.
\newblock {\em Expositiones Mathematicae}, 2018.

\bibitem{grauert}
H.~Grauert and R.~Remmert.
\newblock Direct image theorem.
\newblock In {\em Coherent Analytic Sheaves}, pages 188--222. Springer, 1984.

\bibitem{gh}
P.~Griffiths and J.~Harris.
\newblock {\em Principles of algebraic geometry}.
\newblock John Wiley \& Sons, 2014.

\bibitem{jacob}
A.~Jacob.
\newblock Existence of approximate {H}ermitian-{E}instein structures on
  semi-stable bundles.
\newblock {\em Asian Journal of Mathematics}, 18, 12 2010.

\bibitem{kobayashi}
S.~Kobayashi.
\newblock {\em Differential geometry of complex vector bundles}, volume 793.
\newblock Princeton University Press, 2014.

\bibitem{lubke}
M.~L{\"u}bke.
\newblock Stability of {E}instein-{H}ermitian vector bundles.
\newblock {\em Manuscripta Mathematica}, 42(2-3):245--257, 1983.

\bibitem{ou2022admissible}
W.~Ou.
\newblock Admissible metrics on compact k$\backslash$" ahler varieties.
\newblock {\em arXiv preprint arXiv:2201.04821}, 2022.

\bibitem{popovici}
D.~Popovici.
\newblock A simple proof of a theorem by {U}hlenbeck and {Y}au.
\newblock {\em Mathematische Zeitschrift}, 250(4):855--872, 2005.

\bibitem{simpson}
C.~T. Simpson.
\newblock Constructing variations of {H}odge structure using {Y}ang-{M}ills
  theory and applications to uniformization.
\newblock {\em Journal of the American Mathematical Society}, 1(4):867--918,
  1988.

\bibitem{siu}
Y.-T. Siu.
\newblock {\em Lectures on Hermitian-Einstein metrics for stable bundles and
  K{\"a}hler-Einstein metrics: delivered at the German Mathematical Society
  Seminar in D{\"u}sseldorf in June, 1986}, volume~8.
\newblock Birkh{\"a}user, 2012.

\bibitem{uy}
K.~Uhlenbeck and S.-T. Yau.
\newblock On the existence of {H}ermitian-{Y}ang-{M}ills connections in stable
  vector bundles.
\newblock {\em Communications on Pure and Applied Mathematics},
  39(S1):S257--S293, 1986.

\end{thebibliography}

\end{document}